\def\tp{\ensuremath{\hdots}}
\def\infini{\ensuremath{\infty}}
\def\fc{\ensuremath{\mathds 1}}
\def\Cc{\ensuremath{\mathcal C}}
\def\Fc{\ensuremath{\mathcal{F}}}
\def\Nc{\ensuremath{\mathcal{N}}}
\def\E{\ensuremath{\mathbb E}}
\def\L{\ensuremath{\mathbb L}}
\def\N{\ensuremath{\mathbb N}}
\def\P{\ensuremath{\mathbb P}}
\def\R{\ensuremath{\mathbb R}}
\def\T{\ensuremath{\mathbb T}}
\def\Z{\ensuremath{\mathbb Z}}
\definecolor{cqcqcq}{rgb}{0.7529411764705882,0.7529411764705882,0.7529411764705882}
\definecolor{ffqqqq}{rgb}{1.,0.,0.}
\definecolor{ffqqtt}{rgb}{1.,0.,0.2}
\definecolor{qqwuqq}{rgb}{0.,0.39215686274509803,0.}
\definecolor{qqqqff}{rgb}{0.,0.,1.}
\definecolor{uuuuuu}{rgb}{0.26666666666666666,0.26666666666666666,0.26666666666666666}
\definecolor{xdxdff}{rgb}{0.49019607843137253,0.49019607843137253,1.}
\definecolor{uququq}{rgb}{0.25,0.25,0.25}
\def\am{\text{argmin}}
\newtheorem{theorem}{Theorem}
\numberwithin{theorem}{section}
\newtheorem{remark}{Remark}
\numberwithin{remark}{section}
\newtheorem{lemma}{Lemma}
\numberwithin{lemma}{section}
\newtheorem{proposition}{Proposition}
\numberwithin{proposition}{section}
\newtheorem{example}{Example}
\numberwithin{example}{section}
\numberwithin{defi}{section}
\newcommand{\psh}[2]{\ensuremath{\left\langle #1,#2\right\rangle}}
\newcommand{\somm}[2]{\ensuremath{\underset{#1}{\overset{#2}{\sum}}}}
\newcommand{\devp}[2]{\ensuremath{\frac{\partial{#1}}{\partial{#2}}}}
\title{Template estimation in computational anatomy: Fréchet means in top and quotient spaces are not consistent}
\author{
Loïc Devilliers\footnote{Université C\^ote d’Azur, Inria, France, \url{loic.devilliers@inria.fr}},
Stéphanie Allassonnière\footnote{CMAP, Ecole polytechnique, CNRS, Université Paris-Saclay, 91128, Palaiseau, France},
Alain Trouvé\footnote{CMLA, ENS Cachan, CNRS, Université Paris-Saclay, 94235 Cachan, France},\\
and Xavier Pennec\footnote{Université C\^ote d’Azur, Inria, France}}
\begin{document}
\maketitle
\begin{abstract}
In this article, we study the consistency of the template estimation with the Fréchet mean in quotient spaces.
The Fréchet mean in quotient spaces is often used when the observations are deformed or transformed by a group action. We show that in most cases this estimator is actually inconsistent. We exhibit a sufficient condition for this inconsistency, which amounts to the folding of the distribution of the noisy template when it is projected to the quotient space. This condition appears to be fulfilled as soon as the support of the noise is large enough. 
To quantify this inconsistency we provide lower and upper bounds of the bias as a function of the variability (the noise level). This shows that the consistency bias cannot be neglected when the variability increases. 
\end{abstract}
Keyword : Template, Fréchet mean, group action, quotient space, inconsistency, consistency bias, empirical Fréchet mean, Hilbert space, manifold 
\newpage
\tableofcontents
\section{Introduction}
In Kendall's shape space theory~\cite{ken1}, in computational anatomy~\cite{gre}, in statistics on signals, or in image analysis, one often aims at estimating a template. A template stands for a prototype of the data.
The data can be the shape of an organ studied in a population~\cite{dur} or an aircraft~\cite{alla3}, an electrical signal of the human body, a MR image etc. To analyse the observations, one assumes that these data follow a statistical model. One often models observations as random deformations of the template with additional noise. This deformable template model proposed in~\cite{gre} is commonly used in computational anatomy.
The concept of deformation introduces the notion of group action: the deformations we consider are elements of a group which acts on the space of observations, called here the top space. 
Since the deformations are unknown, one usually considers equivalent classes of observations under the group action. In other words, one considers the quotient space of the top space (or ambient space) by the group. In this particular setting, the template estimation is most of the time based on the minimisation of the empirical variance in the quotient space (for instance~\cite{kur,jos,sab} among many others). 
The points that minimise the empirical variance are called the empirical Fréchet mean. The Fréchet means introduced in~\cite{fre} is comprised of the elements minimising the variance. This generalises the notion of expected value in non linear spaces. Note that the existence or uniqueness of Fréchet mean is not ensured. But sufficient conditions may be given in order to reach existence and uniqueness (for instance~\cite{kar} and~\cite{ken2}).

Several group actions are used in practice: some signals can be shifted in time compared to other signals (action of translations~\cite{hit}), landmarks can be transformed rigidly~\cite{ken1}, shapes can be deformed by diffeomorphisms~\cite{dur}, etc. In this paper we restrict to transformation which leads the norm unchanged. Rotations for instance leave the norm unchanged, but it may seem restrictive. In fact, the square root trick detailed in~\cref{sec:fixedpoint}, allows to build norms  which are unchanged, for instance by reparametrization of curves with a diffeomorphism, where our work can be applied.

We raise several issues concerning the estimation of the template.
\begin{enumerate}
\item Is the Fréchet mean in the quotient space equal to the original template projected in the quotient space? In other words, is the template estimation with the Fréchet mean in quotient space consistent?
\item If there is an inconsistency, how large is the consistency bias? 
Indeed, we may expect the consistency bias to be negligible in many practicable cases.
\item If one gets only a finite sample, one can only estimate the empirical Fréchet mean. How far is the empirical Fréchet mean from the original template?
\end{enumerate}
These issues originated from an example exhibited by
Allassonnière, Amit and Trouvé~\cite{all}: they took a step function as a template and they added some noise and shifted in time this function. By repeating this process they created a data sample from this template. With this data sample, they tried to estimate the template with the empirical Fréchet mean in the quotient space. 
In this example, minimising the empirical variance did not succeed in estimating well the template when the noise added to the template increases, even with a large sample size.

One solution to ensure convergence to the template is to replace this estimation method with a Bayesian paradigm (\cite{all2,bon} or~\cite{miao}). But there is a need to have a better understanding of the failure of the template estimation with the Fréchet mean. One can studied the inconsistency of the template estimation. Bigot and Charlier~\cite{big} first studied the question of the template estimation with a finite sample in the case of translated signals or images by providing a lower bound of the consistency bias. This lower bound was unfortunately not so informative as it is converging to zero asymptotically when the dimension of the space tends to infinity.
Miolane et al.~\cite{mio,mio2} later provided a more general explanation of why the template is badly estimated for a general group action thanks to a geometric interpretation. They showed that the external curvature of the orbits is responsible for the inconsistency. This result was further quantified with Gaussian noise. In this article, we provide sufficient conditions on the noise for which inconsistency appears and we quantify the consistency bias in the general (non necessarily Gaussian) case. Moreover,  we mostly consider a vector space (possibly infinite dimensional) as the top space while  
the article of Miolane et al. is restricted to finite dimensional manifolds. 
In a preliminary unpublished version of this work~\cite{dev}, we proved the inconsistency when the transformations come from a finite group acting by translation. The current article extends these results by generalizing to any isometric action of finite and non-finite groups.

This article is organised as follows. \Cref{subsec:mintro} details the mathematical terms that we use and the generative model. In \cref{sec:bias,sec:general}, we exhibit sufficient condition that lead to an inconsistency when the template is not a fixed point under the group action. This sufficient condition can be roughly understand as follows: with a non zero probability, the projection of the random variable on the orbit of the template is different from the template itself. This condition is actually quite general. In particular, this condition it is always fulfilled with the Gaussian noise or with any noise whose support is the whole space. Moreover we quantify the consistency bias with lower and upper bounds. We restrict our study to Hilbert spaces and isometric actions. This means that the space is linear, the group acts linearly and leaves the norm (or the dot product) unchanged. \Cref{sec:bias} is dedicated to finite groups. Then we generalise our result in \cref{sec:general} to non-finite groups. To complete this study, we extend in \cref{sec:fixedpoint} the result when the template is a fixed point under the group action and when the top space is a manifold. As a result we show that the inconsistency exists for almost all noises. Although the bias can be neglected when the noise level is sufficiently small, its linear asymptotic behaviour with respect to the noise level show that it becomes unavoidable for large noises.

\section{Definitions, notations and generative model}
\label{subsec:mintro}

 We denote by $M$ the top space, which is the image/shape space, and $G$ the group acting on $M$. The action is a map: 
\begin{equation*}
    \begin{matrix}
    G\times M&\to & M\\
    (g,m)&\mapsto & g\cdot m
    \end{matrix}
\end{equation*}
satisfying the following properties: for all $g,\: g'\in G$, $m\in M$ $(gg')\cdot m=g\cdot (g'\cdot m)$ and $e_G\cdot m=m$ where $e_G$ is the neutral element of $G$. For $m\in M$ we note by $[m]$ the orbit of $m$ (or the class of $m$). This is the set of points reachable from $m$ under the group action: $[m]=\{g\cdot m,\: g\in G\}$. Note that if we take two orbits $[m]$ and $[n]$ there are two possibilities:
\begin{enumerate}
\item The orbits are equal: $[m]=[n]$ i.e. $\exists g\in G \mbox{ s.t. } n=g\cdot m$. 
\item The orbits have an empty intersection: $[m]\cap [n]=\emptyset$.
\end{enumerate}
We call quotient of $M$ by the group $G$ the set all orbits. This quotient is noted by:
\begin{equation*}
Q=M/G=\{[m],\:m\in M\}.
\end{equation*}
The orbit of an element $m\in M$ can be seen as the subset of $M$ of all elements $g\cdot m$ for $g\in G$ or as a point in the quotient space. In this article we use these two ways. We project an element $m$ of the top space $M$ into the quotient by taking $[m]$.

Now we are interested in adding a structure on the quotient from an existing structure in the top space: take $M$ a metric space, with $d_M$ its distance. Suppose that $d_M$ is invariant under the group action which means that $\forall g\in G,\: \forall a,\: b\in M\: d_M(a,b)=d_M(g\cdot a,g\cdot b)$. Then we obtain a pseudo-distance on $Q$ defined by:
\begin{equation}
d_Q([a],[b])=\underset{g\in G}{\inf} d_M(g\cdot a,b).
\label{quotientdistance}
\end{equation}
We remind that a distance on $M$ is a map $d_M:M\times M\mapsto \R^+$ such that for all $m,\: n,\: p\in M$:
\begin{enumerate}
\item $d_M(m,n)=d_M(n,m)$ (symmetry).
\item $d_M(m,n)\leq d_M(m,p)+d_M(p,n)$ (triangular inequality).
\item $d_M(m,m)=0$.
\item $d_M(m,n)=0\Longleftrightarrow m=n$.
\end{enumerate}
A pseudo-distance satisfies only the first three conditions. If we suppose that all the orbits are closed sets of $M$, then one can show that $d_Q$ is a distance. In this article, we assume that $d_Q$ is always a distance, even if a pseudo-distance would be sufficient. $d_Q([a],[b])$ can be interpreted as the distance between the shapes $a$ and $b$, once one has removed the parametrisation by the group $G$. In other words, $a$ and $b$ have been registered. In this article, except in~\cref{sec:fixedpoint}, we suppose that the the group acts isometrically on an Hilbert space, this means that the map $x\mapsto g\cdot x$ is linear, and that the norm associated to the dot product is conserved: $\|g\cdot x\|=\|x\|$. Then $d_M(a,b)=\|a-b\|$ is a particular case of invariant distance. 

We now introduce \textbf{the generative model} used in this article for $M$ a vector space. Let us take a template $t_0\in M$ to which we add a unbiased noise $\epsilon$: $X=t_0+\epsilon$. Finally we transform $X$ with a random shift $S$ of $G$. We assume that this variable $S$ is independent of $X$ and the only observed variable is:
\begin{equation}
    Y=S\cdot X=S\cdot (t_0+\epsilon), \mbox{ with } \E(\epsilon)=0,
    \label{modgen}
\end{equation}
while $S$, $X$ and $\epsilon$ are hidden variables.

Note that it is not the generative model defined by Grenander and often used in computational anatomy. Where the observed variable is rather $Y'=S\cdot t_0+\epsilon'$. But when the noise is isotropic and the action is isometric, one can show that the two models have the same law, since $S\cdot \epsilon$ and $\epsilon$ have the same probability distribution. As a consequence, the inconsistency of the template estimation with the Fréchet mean in quotient space with one model implies the inconsistency with the other model. 
Because the former model~\eqref{modgen} leads to simpler computation we consider only this model.

We can now set the inverse problem: given the observation $Y$, how to estimate the template $t_0$ in $M$? This is an ill-posed problem. Indeed for some element group $g\in G$, the template $t_0$ can be replaced by the translated $g\cdot t_0$, the shift $S$ by $Sg^{-1}$ and the noise $\epsilon$ by $g\epsilon$, which leads to the same observation $Y$. So instead of estimating the template $t_0$, we estimate its orbit $[t_0]$.
By projecting the observation $Y$ in the quotient space we obtain $[Y]$.
Although the observation $Y=S\cdot X$ and the noisy template $X$ are different random variables in the top space, their projections on the quotient space lead to the same random orbit $[Y]=[X]$. 
That is why we consider the generative model~\eqref{modgen}: the projection in the quotient space remove the transformation of the group $G$.
From now on, we use the random orbit $[X]$ in lieu of the random orbit of the observation $[Y]$. 

 The variance of the random orbit $[X]$ (sometimes called the Fréchet functional or the energy function) at the quotient point $[m]\in Q$ is the expected value of the square distance between $[m]$ and the random orbit $[X]$, namely:
\begin{equation}
     Q\ni [m]\mapsto \E(d_Q([m],[X])^2)
    \label{varY}
\end{equation}
An orbit $[m]\in Q$ which minimises this map is called a Fréchet mean of $[X]$.

If we have an \textit{i.i.d} sample of observations $Y_1,\tp, Y_n$ we can write the \textit{empirical quotient variance}:
\begin{equation}
    Q\ni [m]\mapsto \frac1{n} \somm{i=1}{n} d_Q([m],[Y_i])^2=\frac{1}{n} \somm{i=1}{n} \underset{g_i\in G}{\inf} \|m-g_i\cdot Y_i\|^2.
    \label{empv}
\end{equation}
Thanks to the equality of the quotient variables $[X]$ and $[Y]$, an element which minimises this map is an \textit{empirical Fréchet mean} of $[X]$.

In order to minimise the empirical quotient variance~\eqref{empv}, the max-max algorithm\footnote{The term max-max algorithm is used for instance in~\cite{all}, and we prefer to keep the same name, even if it is a minimisation.} alternatively minimises the function $J(m,(g_i)_i)=\frac{1}{n} \somm{i=1}{n} \| m-g_i \cdot Y_i\|^2$ over a point $m$ of the orbit $[m]$ and over the hidden transformation $(g_i)_{1\leq i\leq n}\in G^n$.

With these notations we can reformulate our questions as:
\begin{enumerate}
    \item Is the orbit of the template $[t_0]$ a minimiser of the quotient variance defined in~\eqref{varY}?
If not, the Fréchet mean in quotient space is an inconsistent estimator of $[t_0]$.
    \item In this last case, can we quantify the quotient distance between $[t_0]$ and a Fréchet mean of $[X]$?
    \item Can we quantify the distance between $[t_0]$ and an empirical Fréchet mean of a $n$-sample?
\end{enumerate}
This article shows that the answer to the first question is usually "no" in the framework of an Hilbert space $M$ on which a group $G$ acts linearly and isometrically. The only exception is~\cref{theo1} where the top space $M$ is a manifold. 
In order to prove inconsistency, an important notion in this framework is the isotropy group of a point $m$ in the top space. This is the subgroup which leaves this point unchanged:
\begin{equation*}
    \mbox{Iso}(m)=\{g\in G,\: g\cdot m=m\}.
\end{equation*}
We start in~\cref{sec:bias} with the simple example where the group is finite and the isotropy group of the template is reduced to the identity element ($Iso(t_0)=\{e_G\}$, in this case $t_0$ is called a regular point). We turn in~\cref{sec:general} to the case of a general group and an isotropy group of the template which does not cover the whole group ($Iso(t_0)\neq G$) i.e $t_0$ is not a fixed point under the group action.
To complete the analysis, we assume in~\cref{sec:fixedpoint} that the template $t_0$ is a fixed point which means that $Iso(t_0)=G$.

In~\cref{sec:bias,sec:general} we show lower and upper bounds of the consistency bias which we define as the quotient distance between the template orbit and the Fréchet mean in quotient space. These results give an answer to the second question. In~\cref{sec:general}, we show a lower bound for the case of the empirical Fréchet mean which answers to the third question. 

As we deal with different notions whose name or definition may seem similar, we use the following vocabulary: 
\begin{enumerate}
    \item The variance of the noisy template $X$ in the top space is the function $E:m\in M\mapsto \E(\|m-X\|^2)$. The unique element which minimises this function is the Fréchet mean of $X$ in the top space. With our assumptions it is the template $t_0$ itself.
\item We call variability (or noise level) of the template the value of the variance at this minimum: $\sigma^2=\E(\|t_0-X\|^2)=E(t_0)$.
    \item The variance of the random orbit $[X]$ in the quotient space is the function $F:m\mapsto \E(d_Q([m],[X])^2)$. 
Notice that we define this function from the top space and not from the quotient space.
With this definition, an orbit $[m_\star]$ is a Fréchet mean of $[X]$ if the point $m_\star$ is a global minimiser of $F$.
\end{enumerate}

In~\cref{sec:bias,sec:general}, we exhibit a sufficient condition for the inconsistency, which is:
the noisy template $X$ takes value with a non zero probability in the set of points which are strictly closer to $g\cdot t_0$ for some $g\in G$ than the template $t_0$ itself. 
This is linked to the folding of the distribution of the noisy template when it is projected to the quotient space. The points for which the distance to the template orbit in the quotient space is equal to the distance to the template in the top space are projected without being folded.
If the support of the distribution of the noisy template contains folded points (we only assume that the probability measure of $X$, noted $\P$, is a regular measure), then there is inconsistency. The support of the noisy template $X$ is defined by the set of points $x$ such that $\P(X\in B(x,r))>0$ for all $r>0$.
For different geometries of the orbit of the template, we show that this condition is fulfilled as soon as the support of the noise is large enough. 

The recent article of Cleveland et al.~\cite{cle} may seem contradictory with our current work. Indeed the consistency of the template estimation with the Fréchet mean in quotient space is proved under hypotheses which seem to satisfy our framework: the norm is unchanged under their group action (isometric action) and a noise is present in their generative model. However we believe that the noise they consider might actually not be measurable. Indeed, their top space is:
\begin{equation*}
    L^2([0,1])=\left\{f:[0,1]\to \R \mbox{ such that } f \mbox{ is measurable and } \int_0^1 f^2(t)dt<+\infini\right\}.
\end{equation*}
The noise $e$ is supposed to be in $L^2([0,1])$ such that for all $t,\:s\in [0,1]$, $\E(e(t))=0$ and $\E(e(t)e(s))=\sigma^2 \fc_{s=t}$, for $\sigma>0$. This means that $e(t)$ and $e(s)$ are chosen without correlation as soon as $s\neq t$. In this case, it is not clear for us that the resulting function $e$ is measurable, and thus that its Lebesgue integration makes sense. Thus, the existence of such a random process should be established before we can fairly compare the results of both works.

\section{Inconsistency for finite group when the template is a regular point}
\label{sec:bias}
In this Section, we consider a finite group $G$ acting isometrically and effectively on $M=\R^n$ a finite dimensional space equipped with the euclidean norm $\|\quad\|$, associated to the dot product $\psh{\quad}{\quad}$.

We say that the action is effective if $x\mapsto g\cdot x$ is the identity map if and only if $g=e_G$. Note that if the action is not effective, we can define a new effective action by simply quotienting $G$ by the subgroup of the element $g\in G$ such that $x\mapsto g\cdot x$ is the identity map. 

The template is assumed to be a regular point which means that the isotropy group of the template is reduced to the neutral element of $G$. Note that the measure of singular points (the points which are not regular) is a null set for the Lebesgue measure (see~\cref{singnull} in~\cref{subsec:prop}).

\begin{example}
\label{ex2}
The action of translation on coordinates: this action is a simplified setting for image registration, where images can be obtained by the translation of one scan to another due to different poses. More precisely, we take the vector space $M=\R^\T$ where $G=\T=(\Z/N\Z)^D$ is the finite torus in $D$-dimension. An element of $\R^\T$ is seen as a function $m:\T\to \R$, where $m(\tau)$ is the grey value at pixel $\tau$. When $D=1$, $m$ can be seen like a discretised signal with $N$ points, when $D=2$, we can see $m$ like an image with $N\times N$ pixels etc. We then define the group action of $\T$ on $\R^\T$ by: 
\begin{equation*}
\tau\in \T,\: m\in \R^\T \quad \tau \cdot m:\sigma\mapsto m(\sigma+\tau).
\end{equation*}
This group acts isometrically and effectively on $M=\R^ \T$.
\end{example}

In this setting, if $\E(\|X\|^2)<+\infini$ then the variance of $[X]$ is well defined:
\begin{equation}
F:m\in M\mapsto \E(d_Q([X],[m])^2).
\label{defJ}
\end{equation}
In this framework, $F$ is non-negative and continuous. Thanks to Cauchy-Schwarz inequality we have: 
\begin{equation*}
\underset{\|m\| \to \infini}{\lim} F(m)\geq \underset{\|m\|\to \infini}{\lim} \|m\|^2 -2\|m\| \E(\|X\|)+\E(\|X\|^2)=+\infini.
\end{equation*}
Thus for some $R>0$ we have: for all $m\in M$ if $\|m\|>R$ then $F(m)\geq F(0)+1$. The closed ball $B(0,R)$ is a compact set (because $M$ is a finite vector space) then $F$ restricted to this ball reached its minimum $m_\star$. Then for all $m\in M$, if $m\in B(0,R)$, $F(m_\star)\leq F(m)$, if $\|m\|>R$ then $F(m)\geq F(0)+1>F(0)\geq F(m_\star)$. Therefore $[m_\star]$ is a Fréchet mean of $[X]$ in the quotient $Q=M/G$. Note that this ensure the existence but not the uniqueness. 

In this Section, we show that as soon as the support of the distribution of $X$ is big enough, the orbit of the template is not a Fréchet mean of $[X]$. We provide a upper bound of the consistency bias depending on the variability of $X$ and an example of computation of this consistency bias.

\subsection{Presence of inconsistency}
\label{subsec:biascondition}

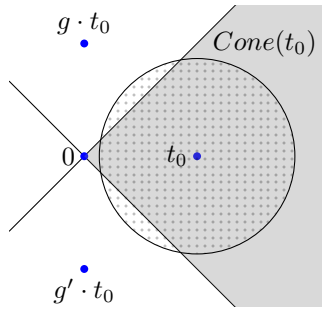
\begin{figure}[!ht]\centering\begin{tikzpicture}
\clip (-1,-2) rectangle (3.2,2);

\draw (-1,-1)--(3,3);
\draw (-1, 1)--(3,-3);

\fill [color=blue] (0,0) circle (1.5pt);
\fill [color=blue] (1.5,0) circle (1.5pt);
\fill [color=blue] (0,1.5) circle (1.5pt);
\fill [color=blue] (0,-1.5) circle (1.5pt);
\fill [color=gray, opacity=0.3] (0,0)--(3.5,3.5)--(3.5,-3.5)--(0,0);

\fill [pattern=dots, opacity=0.5] (1.5,0) circle (1.3);
\draw (1.5,0) circle (1.3);
\draw (0,0) node[left] {$0$};
\draw (1.5,0) node[left] {$t_0$};
\draw (0,1.5) node[above] {$g\cdot t_0$};
\draw (0,-1.5) node[below] {$g'\cdot t_0$};
\draw (2.4,1.8) node[below] {$Cone(t_0)$};

\end{tikzpicture}\caption[Cone of the template]{Planar representation of a part of the orbit of the template $t_0$. The lines are the hyperplanes whose points are equally distant of two distinct elements of the orbit of $t_0$, $Cone(t_0)$ represented in points is the set of points closer from $t_0$ than any other points in the orbit of $t_0$. \Cref{theo} states that if the support (the dotted disk) of the random variable $X$ is not included in this cone, then there is an inconsistency.}\label{fig:Pg}\end{figure}

The following theorem gives a sufficient condition on the random variable $X$ for an inconsistency:
\begin{theorem}
\label{theo}
Let $G$ be a finite group acting on $M=\R^n$ isometrically and effectively. Assume that the random variable $X$ is absolutely continuous with respect to the Lebesgue's measure, with $\E(\|X\|^2)<+\infini$. We assume that $t_0=\E(X)$ is a regular point. 

We define $Cone(t_0)$ as the set of points closer from $t_0$ than any other points of the orbit $[t_0]$, see \cref{fig:Pg} or~\cref{dcone} in~\cref{subsec:prop} for a formal definition. In other words, $Cone(t_0)$ is defined as the set of points already registered with $t_0$.
Suppose that:
\begin{equation}
\P\left(X\notin Cone(t_0)\right)>0,
\label{cone}
\end{equation}
then $[t_0]$ is not a Fréchet mean of $[X]$. 
\end{theorem}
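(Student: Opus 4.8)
The plan is to exhibit an explicit point whose quotient variance is strictly below $F(t_0)$, thereby disqualifying $[t_0]$ as a minimiser. For each realisation $x$ of $X$ I would let $W(x)$ be a point of the orbit $[x]$ closest to $t_0$; such a measurable selection exists because $G$ is finite, and since $X$ is absolutely continuous the set of $x$ where the nearest orbit point is not unique (a finite union of bisecting hyperplanes) is Lebesgue-negligible, hence $\P$-negligible. Writing $W = W(X)$ for this \emph{registered noisy template}, one has $d_Q([X],[t_0]) = \|W - t_0\|$ and therefore $F(t_0) = \E(\|W - t_0\|^2)$.

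I would next record three facts. First, isometry gives $\|W\| = \|X\|$ almost surely. Second, $W \in Cone(t_0)$ almost surely: since $W$ is the point of $[W]=[X]$ closest to $t_0$, for every $g\in G$ we have $\|W - g\cdot t_0\| = \|g^{-1}\cdot W - t_0\| \geq \|W - t_0\|$, which is exactly the defining inequality of the cone (the bisecting hyperplanes pass through the origin because $\|g\cdot t_0\| = \|t_0\|$). Third, $X \in Cone(t_0)$ if and only if $W = X$, whereas on the event $A^c = \{X \notin Cone(t_0)\}$ the registration strictly decreases the distance, $\|W - t_0\| < \|X - t_0\|$.

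The heart of the argument, and the step I expect to be the main obstacle, is to show that $\bar W := \E(W) \neq t_0$; this is where the folding hypothesis $\P(X \notin Cone(t_0)) > 0$ must be turned into a genuine displacement of the mean. Combining $\|W\| = \|X\|$ with the expansion of $\|W - t_0\|^2 \leq \|X - t_0\|^2$ yields $\psh{W}{t_0} \geq \psh{X}{t_0}$, with strict inequality on $A^c$. Taking expectations, using $\E(X) = t_0$ and $\P(A^c) > 0$, gives
\begin{equation*}
\psh{\bar W - t_0}{t_0} = \E\left(\psh{W - X}{t_0}\right) = \E\left(\psh{W - X}{t_0}\,\fc_{A^c}\right) > 0 ,
\end{equation*}
so $\bar W \neq t_0$. (Regularity forces $t_0 \neq 0$, since $0$ is a fixed point, so this inner product is meaningful; the argument relies essentially on isometry, through $\|W\|=\|X\|$, and on $t_0$ being the top-space mean.)

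Finally I would conclude by a bias-variance computation. As $W$ lies in $[X]$, for every realisation $d_Q([\bar W],[X])^2 \leq \|\bar W - W\|^2$, hence $F(\bar W) \leq \E(\|\bar W - W\|^2)$. Because $\bar W = \E(W)$, the identity $\E(\|W - t_0\|^2) = \E(\|W - \bar W\|^2) + \|\bar W - t_0\|^2$ gives $\E(\|\bar W - W\|^2) = F(t_0) - \|\bar W - t_0\|^2$. Therefore $F(\bar W) \leq F(t_0) - \|\bar W - t_0\|^2 < F(t_0)$, so $[t_0]$ is not a Fréchet mean of $[X]$. The same computation identifies $\bar W$ as a natural competitor and foreshadows the quantitative bounds on the consistency bias, since $\|\bar W - t_0\|$ measures how far the mean of the registered variable has been pushed out of $t_0$.
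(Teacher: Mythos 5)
Your proof is correct, but it takes a genuinely different route from the paper's. The paper argues at first order: it first proves (\cref{difftheo}) that $F$ is differentiable at regular points, via local constancy of the registration $g(x,m)$ and dominated convergence, obtaining $\nabla F(t_0)=2\left(t_0-\E(g(X,t_0)\cdot X)\right)$; it then shows by contradiction that under~\eqref{cone} this gradient cannot vanish (the integrand $\psh{X}{t_0}-\psh{g(X,t_0)\cdot X}{t_0}$ is non-positive a.s., so a zero mean would force $g(X,t_0)=e_G$ a.s.), hence $t_0$ is not even a critical point of $F$. You instead make a zeroth-order, competitor-based argument: your registered mean $\bar W=\E(g(X,t_0)\cdot X)$ is exactly the point $Z$ appearing in the paper's gradient formula~\eqref{nablaJ} and in \cref{fig:DYZ}, and you prove directly that $F(\bar W)\leq \E\left(\|\bar W-W\|^2\right)=F(t_0)-\|\bar W-t_0\|^2<F(t_0)$, with the inequality $\psh{\bar W-t_0}{t_0}>0$ playing the role of the paper's non-vanishing of $\psh{\nabla F(t_0)}{t_0}$. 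Each approach buys something. The paper's route yields the gradient formula of \cref{difftheo}, which has independent interest, but it pays with the differentiation machinery and genuinely needs absolute continuity of $X$ and regularity of $t_0$ for $F$ to be differentiable there. Your route needs neither: a measurable selection of the registration (available since $G$ is finite, choosing $W=X$ on $Cone(t_0)$) suffices, so absolute continuity and regularity are used only through the harmless facts you cite, your argument transfers verbatim to any Hilbert space, and it produces the quantitative gap $\|\bar W-t_0\|^2$, foreshadowing the bias bounds. Indeed your proof anticipates the paper's own strategy for general groups (\cref{theogeneral}), which is also competitor-based; there the competitor is $a_\star t_0$, which is precisely the orthogonal projection of your $\bar W$ onto the ray $\R^+t_0$ (since $\|t_0\|^2(a_\star-1)=\psh{\bar W-t_0}{t_0}$), so by Cauchy-Schwarz the variance drop you obtain, $\|\bar W-t_0\|^2$, is at least as large as the paper's $\|t_0\|^2(a_\star-1)^2$.
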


The proof of \cref{theo} is based on two steps: first, differentiating the variance $F$ of $[X]$. Second, showing that the gradient at the template is not zero, therefore the template can not be a minimum of $F$. \Cref{difftheo} makes the first step. 
\begin{theorem}
\label{difftheo}
The variance $F$ of $[X]$ is differentiable at any regular points. For $m_0$ a regular point, we define $g(x,m_0)$ as the almost unique $g\in G$ minimising $\|m_0-g\cdot x\|$ (in other words, $g(x, m_0)\cdot x\in Cone(m_0)$). This allows us to compute the gradient of $F$ at $m_0$:
\begin{equation}
\nabla F(m_0)=2(m_0-\E( g(X,m_0)\cdot X)).
\label{nablaJ}
\end{equation}

\end{theorem}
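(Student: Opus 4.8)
The plan is to write $F(m)=\E[h(m,X)]$ with $h(m,x)=\min_{g\in G}\|m-g\cdot x\|^2$, and then to differentiate under the expectation sign. Since $G$ is finite, for each fixed $x$ the map $m\mapsto h(m,x)$ is the minimum of finitely many smooth quadratics $f_g(m)=\|m-g\cdot x\|^2$, hence it is locally Lipschitz and differentiable at every point where the minimising $g$ is unique. The whole argument amounts to controlling this pointwise differentiability together with an integrable bound on the difference quotients.

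First I would establish that at the regular point $m_0$ the minimiser is almost surely unique. Using the isometry of the action, $\|m_0-g\cdot x\|^2=\|m_0\|^2-2\psh{m_0}{g\cdot x}+\|x\|^2$, so a tie $\|m_0-g\cdot x\|=\|m_0-g'\cdot x\|$ between two distinct group elements is equivalent to the single linear equation $\psh{g^{-1}\cdot m_0-g'^{-1}\cdot m_0}{x}=0$ (the adjoint of the $g$-action being the $g^{-1}$-action). Because $m_0$ is regular, $\mbox{Iso}(m_0)=\{e_G\}$, so $g^{-1}\cdot m_0\neq g'^{-1}\cdot m_0$ whenever $g\neq g'$, and the equation defines a genuine hyperplane in $x$, which has Lebesgue measure zero. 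Taking the union over the finitely many pairs $(g,g')$, the event that the minimiser is not unique has probability zero since $X$ is absolutely continuous. On its complement $h(\cdot,x)$ coincides, in a neighbourhood of $m_0$, with the single quadratic $f_{g(x,m_0)}$, hence is differentiable at $m_0$ with gradient $2(m_0-g(x,m_0)\cdot x)$.

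Then I would justify the exchange of derivative and expectation by dominated convergence. Using $|\min_g f_g(m')-\min_g f_g(m)|\leq \max_g|f_g(m')-f_g(m)|$ and expanding, for a unit vector $v$ and $0<t\leq 1$ one gets a difference quotient bounded by $2\|m_0\|+2\|x\|+1$, which is integrable since $\E(\|X\|^2)<\infini$ forces $\E(\|X\|)<\infini$. Dominated convergence then yields, for every direction $v$, the directional derivative
\begin{equation*}
\partial_v F(m_0)=\E\big(2\psh{m_0-g(X,m_0)\cdot X}{v}\big)=2\psh{m_0-\E(g(X,m_0)\cdot X)}{v}.
\end{equation*}
Since this is linear in $v$, $F$ is Gateaux differentiable at $m_0$ with gradient $2(m_0-\E(g(X,m_0)\cdot X))$, the claimed formula.

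Finally, the same uniform bound shows that $F$ is locally Lipschitz near $m_0$; combined with the compactness of the unit sphere in $\R^n$, this upgrades Gateaux differentiability to Fréchet differentiability, so $F$ is genuinely differentiable at $m_0$. The step I expect to be the main obstacle is the domination: the integrand is only piecewise smooth and its non-differentiability locus moves with $x$, so the whole proof hinges on bounding the difference quotient uniformly in $g$ (and in $t$) by an integrable function of $x$. Once that bound is in hand, the identification of the almost-surely-unique minimiser and the final computation are routine.
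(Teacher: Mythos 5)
Your proof is correct and follows essentially the same route as the paper's: almost-sure uniqueness of the minimising group element at a regular point (via a finite union of hyperplanes, using isometry plus $\mathrm{Iso}(m_0)=\{e_G\}$, and absolute continuity of $X$), local constancy of that minimiser giving pointwise differentiability of the integrand, and dominated convergence with the same integrable bound $2\|m_0\|+2\|x\|+1$ on the difference quotients to exchange differentiation and expectation. Your explicit Gateaux-to-Fréchet upgrade via the local Lipschitz bound and compactness of the sphere merely makes rigorous a step the paper leaves implicit; it does not change the substance of the argument.
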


\begin{figure}[!ht]
\centering
\subfloat[Graphic representation of the template $t_0=\E(X)$ mean of points of the support of $X$. ]{\begin{tikzpicture}
\clip (-1,-2) rectangle (3.2,2);

\draw (-1,-1)--(3,3);
\draw (-1, 1)--(3,-3);

\fill [color=blue] (0,0) circle (1.5pt);
\fill [color=blue] (1.5,0) circle (1.5pt);
\fill [color=blue] (0,1.5) circle (1.5pt);
\fill [color=blue] (0,-1.5) circle (1.5pt);
\fill [color=gray, opacity=0.3] (0,0)--(3.5,3.5)--(3.5,-3.5)--(0,0);

\fill [pattern=dots, opacity=0.5] (1.5,0) circle (1.3);
\draw (1.5,0) circle (1.3);
\draw (0,0) node[left] {$0$};
\draw (1.5,0) node[left] {$t_0$};
\draw (0,1.5) node[above] {$g\cdot t_0$};
\draw (0,-1.5) node[below] {$g'\cdot t_0$};
\draw (2.4,1.8) node[below] {$Cone(t_0)$};

\end{tikzpicture}\label{fig:DY}}
\quad 
\subfloat[Graphic representation of $Z=\E(g(X,t_0)\cdot X)$. The points $X$ which were outside $Cone(t_0)$ are now in $Cone(t_0)$ (thanks to $g(X,t_0)$). This part, in grid-line, represents the points which have been folded.]{\begin{tikzpicture}
\clip (-1,-2) rectangle (3.2,2);

\draw (-1,-1)--(3,3);
\draw (-1, 1)--(3,-3);

\draw (0,0) node[left] {$0$};
\draw (1.5,0) node[left] {$t_0$};
\draw (1.7,0.3) node[right] {$Z$};
\draw (0,1.5) node[above] {$g\cdot t_0$};
\draw (0,-1.5) node[below] {$g'\cdot t_0$};
\draw (2.4,1.8) node[below] {$Cone(t_0)$};

\fill [color=blue] (0,0) circle (1.5pt);
\fill [color=blue] (1.5,0) circle (1.5pt);
\fill [color=blue] (1.7,0.3) circle (1.5pt);
\fill [color=blue] (0,1.5) circle (1.5pt);
\fill [color=blue] (0,-1.5) circle (1.5pt);
\fill [color=gray, opacity=0.3] (0,0)--(3.5,3.5)--(3.5,-3.5)--(0,0);

\fill[pattern=dots,opacity=0.5] (0.222,-0.222)--(1.28,-1.28)--(1.28,1.28)--(0.222,0.222)--(0.222,0.222);
\filldraw[draw=black,pattern=dots,opacity=0.5,shift={(1.5,0)}] plot[domain=-1.7410082520803716:1.7410082520803711,variable=\t]({1*1.2987686476043376*cos(\t r)+0*1.2987686476043376*sin(\t r)},{0*1.2987686476043376*cos(\t r)+1*1.2987686476043376*sin(\t r)});

\filldraw [draw=black,pattern=north east lines, shift={(0,1.5)}] plot[domain=4.882600905670164:6.112973381894112,variable=\t]({1*1.2987686476043376*cos(\t r)+0*1.2987686476043376*sin(\t r)},{0*1.2987686476043376*cos(\t r)+1*1.2987686476043376*sin(\t r)});

\filldraw[draw=black,pattern=north east lines,shift={(0,-1.5)}] plot[domain=0.17021192528547438:1.4005844015094222,variable=\t]({1*1.2987686476043376*cos(\t r)+0*1.2987686476043376*sin(\t r)},{0*1.2987686476043376*cos(\t r)+1*1.2987686476043376*sin(\t r)});

\filldraw[draw=black,pattern=dots,opacity=0.5,shift={(1.5,0)}] plot[domain=2.971380728304319:3.3118045788752672,variable=\t]({1*1.2987686476043376*cos(\t r)+0*1.2987686476043376*sin(\t r)},{0*1.2987686476043376*cos(\t r)+1*1.2987686476043376*sin(\t r)});
\end{tikzpicture}\label{fig:DZ}}
\caption[Gradient of the variance at the template is not zero.]{$Z$ is the mean of points in $Cone(t_0)$ where $Cone(t_0)$ is the set of points closer from $t_0$ than $g\cdot t_0$ for $g\in G\setminus{e_G}$. Therefore it seems that $Z$ is higher that $t_0$, therefore $\nabla F(t_0)=2(t_0-Z)\neq 0$.}
\label{fig:DYZ}
\end{figure}
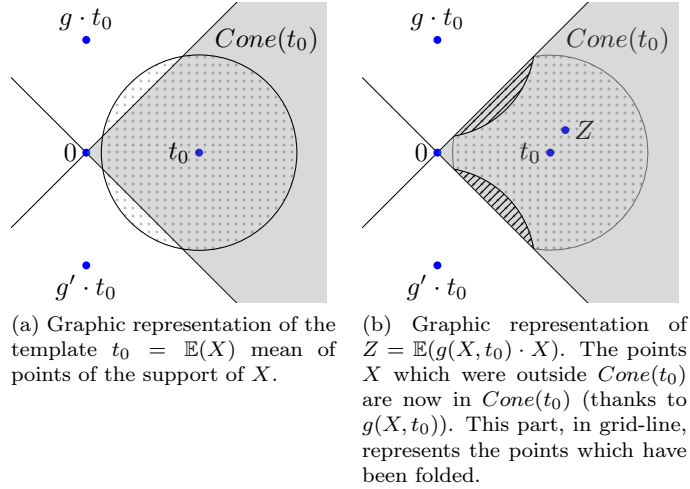


This Theorem is proved in~\cref{subsec:prop}. Then we show that the gradient of $F$ at $t_0$ is not zero. To ensure that $F$ is differentiable at $t_0$ we suppose in the assumptions of~\cref{theo} that $t_0=\E(X)$ is a regular point. Thanks to~\cref{difftheo} we have:
\begin{equation*}
\nabla F(t_0)=2(t_0-\E( g(X,t_0)\cdot X)).
\label{difft0}
\end{equation*}

Therefore $\nabla F(t_0)/2$ is the difference between two terms, which are represented on~\cref{fig:DYZ}: on~\cref{fig:DY} there is a mass under the two hyperplanes outside $Cone(t_0)$, so this mass is nearer from $gt_0$ for some $g\in G$ than from $t_0$. In the following expression $
Z=\E( g(X,t_0)\cdot X)$, for $X\notin Cone(t_0)$, $g(X,t_0)X\in Cone(t_0)$ such points are represented in grid-line on~\cref{fig:DYZ}. This suggests that the point $Z=\E( g(X,t_0)\cdot X)$ which is the mean of points in $Cone(t_0)$  is further away from $0$ than $t_0$. Then $\nabla F(t_0)/2=t_0-Z$ should be not zero, and $t_0=\E(X)$ is not a critical point of the variance of $[X]$. As a conclusion $[t_0]$ is not a Fréchet mean of $[X]$. This is turned into a rigorous proof in~\cref{subsec:prooftheo}.

In the proof of~\cref{theo}, we took $M$ an Euclidean space and we work with the Lebesgue's measure in order to have $\P(X\in H)=0$ for every hyperplane $H$. Therefore the proof of~\cref{theo} can be extended immediately to any Hilbert space $M$, if we make now the assumption that $\P(X\in H)=0$ for every hyperplane $H$, as long as we keep a finite group acting isometrically and effectively on $M$.

\Cref{fig:DYZ} illustrates the condition of \cref{theo}: if there is no mass beyond the hyperplanes, then the two terms in $\nabla F(t_0)$ are equal (because almost surely $g(X,t_0)\cdot X=X$). Therefore in this case we have $\nabla F(t_0)=0$. This do not prove necessarily that there is no inconsistency, just that the template $t_0$ is a critical point of $F$. Moreover this figure can give us an intuition on what the consistency bias (the distance between $[t_0]$ and the set of all Fréchet mean in the quotient space) depends: for $t_0$ a fixed regular point, when the variability of $X$ (defined by $\E(\|X-t_0\|^2)$) increases the mass beyond the hyperplanes on \cref{fig:DYZ} also increases, the distance between $\E( g(X,t_0)\cdot X)$ and $t_0$ (i.e. the norm of $\nabla F(t_0)$) augments. Therefore $q$ the Fréchet mean should be further from $t_0$, (because at this point one should have $\nabla F(q)=0$ or $q$ is a singular point). Therefore the consistency bias appears to increase with the variability of $X$. By establishing a lower and upper bound of the consistency bias and by computing the consistency bias in a very simple case, \cref{subsec:quanti,subsec:Exemple,subsec:lowerbound,subsec:up} investigate how far this hypothesis is true.

We can also wonder if the converse of \cref{theo} is true: if the support is included in $Cone(t_0)$, is there consistency? We do not have a general answer to that. In the simple example \cref{subsec:Exemple} it happens that condition~\eqref{cone} is necessary and sufficient. More generally the following proposition provides a partial converse:

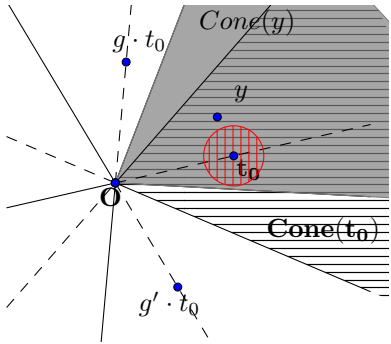
\begin{figure}[!ht]	
\centering
\begin{tikzpicture}[line cap=round,line join=round,>=triangle 45,x=0.8cm,y=0.8cm]
\clip(-2.84,-0.9) rectangle (3.5,4.68);
\fill[fill=black,pattern=horizontal lines,pattern color=black,thick] (-1.0365112624723687,1.731850250241854) -- (2.730851205393955,6.0505340548691) -- (5.808224746961198,-1.1876073536357676) -- cycle;
\fill[color=gray,fill=gray,fill opacity=0.7] (-1.0365112624723687,1.731850250241854) -- (1.0265745945705498,7.078599404177075) -- (6.393949964931377,1.3294950741535771) -- cycle;
\draw (0.82,2.28) node[anchor=north west] {$\mathbf{t_0}$};
\draw (-1.24,4.42) node[anchor=north west] {$g\cdot t_0$};
\draw [domain=-2.8399999999999985:-1.0365112624723687] plot(\x,{(-0.--1.7318502502418531*\x)/-1.0365112624723687});
\draw [domain=-2.8399999999999985:-1.0365112624723687] plot(\x,{(-2.394437791922347-2.0103423180744766*\x)/-0.17940081007001307});
\draw [dash pattern=on 4pt off 4pt,domain=-1.0365112624723687:3.1999999999999984] plot(\x,{(--2.3944377919223463--2.0103423180744757*\x)/0.17940081007001296});
\draw [dash pattern=on 4pt off 4pt,domain=-1.0365112624723687:3.1999999999999984] plot(\x,{(-0.-1.731850250241854*\x)/1.0365112624723687});
\draw (-0.8,0.08) node[anchor=north west] {$g'\cdot t_0$};
\draw [domain=-1.0365112624723687:3.1999999999999984] plot(\x,{(--7.432605742278237--2.9178663239074543*\x)/2.5453727506426747});
\draw [dash pattern=on 4pt off 4pt,domain=-2.8399999999999985:-1.0365112624723687] plot(\x,{(-5.435189126651498-2.133727506426735*\x)/-1.8613367609254503});
\draw [domain=-1.0365112624723687:3.1999999999999984] plot(\x,{(--2.3944377919223467-0.7918502502418541*\x)/1.8565112624723685});
\draw [dash pattern=on 4pt off 4pt,domain=-2.8399999999999985:-1.0365112624723687] plot(\x,{(-2.8333018713386857--0.9369843741185866*\x)/-2.1967815793206853});
\draw [dash pattern=on 4pt off 4pt,domain=-1.0365112624723687:3.1999999999999984] plot(\x,{(--3.8742817312776046--0.450609631350424*\x)/1.9673870607049007});
\draw [domain=-2.8399999999999985:-1.0365112624723687] plot(\x,{(-3.874281731277604-0.450609631350424*\x)/-1.9673870607049007});
\draw (1.32,1.32) node[anchor=north west] {$\mathbf{Cone(t_0)}$};
\draw (-1.46,1.8) node[anchor=north west] {$\mathbf{O}$};
\draw [domain=-1.0365112624723687:3.1999999999999984] plot(\x,{(--3.3772179327520977-0.1091315888432276*\x)/2.015378669831401});
\draw [domain=-1.0365112624723687:3.1999999999999984] plot(\x,{(--6.1583861230983405--3.6124662060075994*\x)/1.3938989326204823});
\draw (0.78,3.5) node[anchor=north west] {$y$};
\draw [color=gray] (-1.0365112624723687,1.731850250241854)-- (1.0265745945705498,7.078599404177075);
\draw [color=gray] (1.0265745945705498,7.078599404177075)-- (6.393949964931377,1.3294950741535771);
\draw [color=gray] (6.393949964931377,1.3294950741535771)-- (-1.0365112624723687,1.731850250241854);
\draw (0.2,4.78) node[anchor=north west] {$Cone(y)$};
\draw [color=red,fill=red,pattern=vertical lines,pattern color=red] (0.930875798232532,2.182459881592278) circle (0.4cm);
\begin{scriptsize}
\draw [fill=blue] (0.,0.) circle (1.5pt);
\draw [fill=blue] (0.930875798232532,2.182459881592278) circle (1.5pt);
\draw [fill=blue] (-0.8571104524023557,3.7421925683163297) circle (1.5pt);
\draw [fill=blue] (-1.0365112624723687,1.731850250241854) circle (1.5pt);
\draw [fill=blue] (0.6581102700033057,2.828170800156262) circle (1.5pt);
\draw [fill=blue] (-1.0365112624723687,1.731850250241854) circle (1.5pt);
\draw [fill=blue] (-1.0365112624723687,1.731850250241854) circle (1.5pt);
\draw [fill=blue] (0.35738767014811357,5.344316456249453) circle (2.5pt);
\draw [fill=blue] (-1.0365112624723687,1.731850250241854) circle (1.5pt);
\end{scriptsize}
\end{tikzpicture}	
\caption[Variation of $y\mapsto Cone(y)$.]{$y\mapsto Cone(y)$ is continuous. When the support of the $X$ is bounded and included in the interior of $Cone(t_0)$ the hatched cone. For $y$ sufficiently close to the template $t_0$, the support of the $X$ (the ball in red) is still included 	in $Cone(y)$ (in grey), then $F(y)=(\E(\|X-y\|^2$). Therefore in this case, $[t_0]$ is at least a Karcher mean of $[X]$.}	
\label{fig:conety}
\end{figure}

\begin{proposition}
\label{kar}
If the support of $X$ is a compact set included in the interior of $Cone(t_0)$, then the orbit of the template $[t_0]$ is at least a Karcher mean of $[X]$ (a Karcher mean is a local minimum of the variance).
\end{proposition}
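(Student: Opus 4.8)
The plan is to show that $t_0$ is a local minimiser of the quotient variance $F$ defined in \eqref{defJ}; by the definition recalled in the statement this is exactly what it means for $[t_0]$ to be a Karcher mean of $[X]$. The guiding idea, which is the content of \cref{fig:conety}, is that on a whole neighbourhood of $t_0$ the function $F$ coincides with the top-space variance $E:m\mapsto \E(\|m-X\|^2)$, and the latter is minimised precisely at its Fréchet mean $t_0=\E(X)$.

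First I would record the half-space description of the cone. Squaring $\|x-y\|\le \|x-g\cdot y\|$ and using the isometry $\|g\cdot y\|=\|y\|$, one checks that $x\in Cone(y)$ if and only if $\psh{x}{g\cdot y-y}\le 0$ for every $g\in G$. Let $K$ denote the compact support of $X$. Since $K\subset \mathrm{int}(Cone(t_0))$ and $t_0$ is a regular point (so $g\cdot t_0\neq t_0$ for $g\neq e_G$), membership of a point in the interior forces the defining inequalities to be strict: for each $g\neq e_G$ the continuous map $x\mapsto \psh{x}{g\cdot t_0-t_0}$ is strictly negative on $K$. As $K$ is compact and $G$ is finite, there is $\delta>0$ such that $\psh{x}{g\cdot t_0-t_0}\le -\delta$ for all $x\in K$ and all $g\neq e_G$.

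The core step is a continuity argument showing that the cone moves slowly with its apex. Because the action is linear, $y\mapsto g\cdot y-y$ is continuous and $(x,y)\mapsto \psh{x}{g\cdot y-y}$ is jointly continuous; combining the uniform bound above with the compactness of $K$ and the finiteness of $G$, there is a neighbourhood $U$ of $t_0$ such that $\psh{x}{g\cdot y-y}<0$ for all $y\in U$, all $x\in K$ and all $g\neq e_G$ (the case $g=e_G$ being trivial since $g\cdot y-y=0$). Hence $K\subseteq Cone(y)$ for every $y\in U$. I expect this to be the only delicate point: one must obtain an estimate that is uniform \emph{simultaneously} in $x\in K$ and in $g\in G$, which is precisely where the compactness of the support and the finiteness of the group enter, and where the assumption that $t_0$ is regular is needed (otherwise the constraints attached to $\mathrm{Iso}(t_0)$ would not be bounded away from equality).

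Finally I would conclude. Fix $y\in U$. Since $X\in K\subseteq Cone(y)$ almost surely, $X$ is already registered with $y$, so the infimum $d_Q([y],[X])=\inf_{g\in G}\|y-g\cdot X\|$ is attained at $g=e_G$ and $d_Q([y],[X])=\|y-X\|$ almost surely; therefore $F(y)=\E(\|y-X\|^2)=E(y)$, and in particular $F(t_0)=E(t_0)$. As $E$ is minimised at its unique Fréchet mean $t_0$, we obtain $F(y)=E(y)\ge E(t_0)=F(t_0)$ for every $y\in U$. Thus $t_0$ is a local minimiser of $F$, i.e. $[t_0]$ is a Karcher mean of $[X]$.
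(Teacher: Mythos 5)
Your proof is correct and follows essentially the same route as the paper's: both show that on a whole neighbourhood of $t_0$ the quotient variance $F$ coincides with the top-space variance $E(m)=\E(\|m-X\|^2)$, because the compact support of $X$ remains inside $Cone(y)$ for $y$ near $t_0$, and then conclude from the fact that $E$ is uniquely minimised at $t_0=\E(X)$. The only difference is presentational: you make rigorous, via the half-space description of the cone and a uniform estimate over the compact support and the finite group, the continuity step that the paper treats informally by appeal to its figure.
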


\begin{proof}
If the support of $X$ is a compact set included in the interior of $Cone(t_0)$ then we know that $X$-almost surely: $d_Q([X],[t_0])=\|X-t_0\|$. Thus the variance at $t_0$ in the quotient space is equal to the variance at $t_0$ in the top space. Now by continuity of the distance map (see \cref{fig:conety}) for $y$ in a small neighbourhood of $t_0$, the support of $X$ is still included in the interior of $Cone(y)$. We still have $d_Q([X],[y])=\|X-y\|$ $X$-almost surely. In other words, locally around $t_0$, the variance in the quotient space is equal to the variance in the top space. Moreover we know that $t_0=\E(X)$ is the only global minimiser of the variance of $X$: $m\mapsto E(\|m-X\|^2)=E(m)$. Therefore $t_0$ is a local minimum of $F$ the variance in the quotient space (since the two variances are locally equal). Therefore $[t_0]$ is at least a Karcher mean of $[X]$ in this case. \qquad \end{proof}

\subsection{Upper bound of the consistency bias}
\label{subsec:quanti}
In this Subsection we show an explicit upper bound of the consistency bias.
\begin{theorem}
\label{alla}
When $G$ is a finite group acting isometrically on $M=\R^n$, we denote $|G|$ the cardinal of the group $G$. If $X$ is Gaussian vector: $X\sim \Nc(t_0, s^2 Id_{\R^n})$, and $m_\star\in \am\: F$, then we have the upper bound of the consistency bias:
\begin{equation}
d_Q([t_0],[m_\star])\leq s \sqrt{8\log (|G|)}.
\label{upperbound}
\end{equation}
\end{theorem}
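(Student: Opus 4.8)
The plan is to combine the first-order optimality of $m_\star$ with the Gaussian maximal inequality; the whole point is that, after normalising the bias direction, the relevant Gaussians have standard deviation bounded by $2s$, independently of the dimension $n$ and of $t_0$. I will use the stationarity of $m_\star$ rather than the cruder inequality $F(m_\star)\le F(t_0)$, because only the former carries the factor $1$ needed for the sharp constant $\sqrt{8}$.

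First I would fix the representative: replace $m_\star$ by the element of its orbit closest to $t_0$, so that $\|m_\star-t_0\|=d_Q([t_0],[m_\star])=:b$ and $m_\star\in Cone(t_0)$. Write $\delta=m_\star-t_0$ and, assuming $b>0$, set $u=\delta/b$. Assuming $m_\star$ is a regular point so that \cref{difftheo} applies, the minimiser satisfies $\nabla F(m_\star)=0$, i.e.\ $m_\star=\E(g(X,m_\star)\cdot X)$. Subtracting $t_0=\E(X)$ gives $\delta=\E\big((g(X,m_\star)-\mathrm{Id})\cdot X\big)$, hence
\[
b^2=\langle\delta,\delta\rangle=\E\big(\langle\delta,\,g(X,m_\star)\cdot X-X\rangle\big)\le\E\Big(\max_{g\in G}\langle\delta,\,g\cdot X-X\rangle\Big),
\]
since for each realisation $g(X,m_\star)$ is one particular element of $G$. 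Dividing by $b$ yields the linear-in-$b$ control $b\le\E\big(\max_{g}\langle u,\,g\cdot X-X\rangle\big)$.

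Next I would split the observation into its mean and its fluctuation, $g\cdot X-X=(g\cdot t_0-t_0)+(g\cdot\epsilon-\epsilon)$. The deterministic drift $\langle u,g\cdot t_0-t_0\rangle$ must be shown to be nonpositive for every $g$, and this is the step I expect to be the main obstacle. The membership $m_\star\in Cone(t_0)$ gives $\langle m_\star,g\cdot t_0-t_0\rangle\le0$, and isometry with Cauchy--Schwarz gives $\langle t_0,g\cdot t_0-t_0\rangle\le0$; but these two facts enter with opposite signs in $\langle\delta,g\cdot t_0-t_0\rangle=\langle m_\star,g\cdot t_0-t_0\rangle-\langle t_0,g\cdot t_0-t_0\rangle$, so the nonpositivity of the drift cannot be read off the cone geometry alone and has to be extracted from the critical-point equation together with that geometry (intuitively, the folding pushes the registered mean outward along the bias direction). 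Granting this, $\max_g\langle u,g\cdot X-X\rangle\le\max_g\langle u,g\cdot\epsilon-\epsilon\rangle$, so that $b\le\E\big(\max_g\langle u,g\cdot\epsilon-\epsilon\rangle\big)$.

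Finally I would estimate this expectation with the Gaussian maximal inequality. Each variable $\langle u,g\cdot\epsilon-\epsilon\rangle=\langle g^{-1}\cdot u-u,\epsilon\rangle$ is a centred Gaussian with variance $s^2\|g^{-1}\cdot u-u\|^2\le s^2(\|g^{-1}\cdot u\|+\|u\|)^2=4s^2$, using isometry and $\|u\|=1$; the crucial feature is that this scale equals $2s$, free of $t_0$ and of $n$, precisely because $u$ is a unit vector. Since we have at most $|G|$ centred Gaussians of standard deviation $\le 2s$, the bound $\E(\max_{i\le N}Z_i)\le\tau\sqrt{2\log N}$ with $\tau=2s$ and $N=|G|$ gives $\E\big(\max_g\langle u,g\cdot\epsilon-\epsilon\rangle\big)\le 2s\sqrt{2\log|G|}=s\sqrt{8\log|G|}$, whence $b\le s\sqrt{8\log|G|}$ as claimed. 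The regular-point assumption on $m_\star$ (to invoke \cref{difftheo}) and, above all, the nonpositivity of the drift are the two places requiring genuine care; the rest is bookkeeping.
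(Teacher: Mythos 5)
Your overall strategy (stationarity of $m_\star$ plus a Gaussian maximal inequality) is genuinely different from the paper's, but as written it has two real gaps, and the central one you concede yourself. The fatal point is the drift term: your bound $b\le\E\bigl(\max_{g}\psh{u}{g\cdot X-X}\bigr)$ only becomes useful after discarding $\psh{u}{g\cdot t_0-t_0}$ for every $g$, and you explicitly admit you cannot establish this nonpositivity --- the cone membership of $m_\star$ gives $\psh{m_\star}{g\cdot t_0-t_0}\le 0$ and Cauchy--Schwarz gives $\psh{t_0}{g\cdot t_0-t_0}\le 0$, but these enter $\psh{\delta}{g\cdot t_0-t_0}$ with opposite signs, so the sign of the drift is indeterminate. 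Writing ``granting this'' does not close the hole: without it, the drift can a priori contribute up to $2\|t_0\|$ to the bound, destroying precisely the template-free, dimension-free estimate you are after. The second gap is the assumption that $m_\star$ is a regular point so that \cref{difftheo} applies and $\nabla F(m_\star)=0$. Nothing in the statement guarantees this: the singular set is Lebesgue-null, but a minimiser of $F$ could perfectly well sit on it, and there $F$ need not be differentiable, so the critical-point equation $m_\star=\E(g(X,m_\star)\cdot X)$ on which your whole argument rests is unavailable.

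It is also worth correcting the premise that motivated your route: you rejected the ``cruder'' inequality $F(m_\star)\le F(t_0)$ on the grounds that it cannot yield the constant $\sqrt8$, but the paper's proof (\cref{lemmestephanie1} together with the argument in \cref{allaproof}) uses exactly that inequality and still obtains \eqref{upperbound}. The mechanism is a Chernoff-type bound with a union over $G$: setting $\tilde\rho=d_Q([t_0],[m_\star])$, one shows that whenever $\tilde\rho\ge s\sqrt{2\log|G|}$,
\begin{equation*}
\tilde\rho^2-\tilde\rho\, s\sqrt{8\log|G|}\;\le\; F(m_\star)-\E(\|\epsilon\|^2)\;\le\; F(t_0)-\E(\|\epsilon\|^2)\;\le\;0,
\end{equation*}
and the $\sqrt 8$ appears as the coefficient of the linear term of this quadratic inequality in $\tilde\rho$, with no first-order information needed. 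That route requires neither the regularity of $m_\star$ nor any sign condition on drift terms (indeed, as the paper remarks, it does not even use isometry of the action). If you want to salvage your approach, the two items to supply are a proof that the minimiser can be taken regular and a proof of the drift inequality; absent those, the argument is incomplete.
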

The proof is postponed in \cref{allaproof}. 
When $X\sim \Nc(t_0,s^2 Id_n)$ the variability of $X$ is $\sigma^2=\E(||X-t_0||^2)=ns^2$ and we can write the upper bound of the bias: $d_Q([t_0],[m_\star])\leq \frac{\sigma}{\sqrt n} \sqrt{8\log |G|}$. This Theorem shows that the consistency bias is low when the variability of $X$ is small, which tends to confirm our hypothesis in \cref{subsec:biascondition}. 
It is important to notice that this upper bound explodes when the cardinal of the group tends to infinity.

\subsection{Study of the consistency bias in a simple example}
\label{subsec:Exemple}
In this Subsection, we take a particular case of \cref{ex2}: the action of translation with $\T=\Z/2\Z$. We identify $\R^\T$ with $\R^2$ and we note by $(u,v)^T$ an element of $\R^\T$. In this setting, one can completely describe the action of $\T$ on $\R^\T$: $0\cdot(u,v)^T=(u,v)^T$ and $1\cdot(u,v)^T=(v,u)^T$. The set of singularities is the line $L=\{(u,u)^T,\: u\in \R\}$. We note $HP_A=\{(u,v)^T,\: v> u\}$ the half-plane above $L$ and $HP_B$ the half-plane below $L$. This simple example will allow us to provide necessary and sufficient condition for an inconsistency at regular and singular points. Moreover we can compute exactly the consistency bias, and exhibit which parameters govern the bias. We can then find an equivalent of the consistency bias when the noise tends to zero or infinity. More precisely, we have the following theorem proved in \cref{p2p}:
\begin{proposition} 
\label{2p}
Let $X$ be a random variable such that $\E(\|X\|^2)<+\infini$ and $t_0=\E(X)$.
\begin{enumerate}
\item \label{item1}
If $t_0\in L$, there is no inconsistency if and only if the support of $X$ is included in the line $L=\{(u,u),\: u\in \R\}$.
If $t_0\in HP_A$ (respectively in $HP_B$), there is no inconsistency if and only if the support of $X$ is included in $HP_A\cup L$ (respectively in $HP_B\cup L$).

\item 
If $X$ is Gaussian: $X\sim \Nc(t_0,s^2 Id_2)$, then the Fréchet mean of $[X]$ exists and is unique. 
This Fréchet mean $[m_\star]$ is on the line passing through $\E(X)$ and perpendicular to $L$ and the consistency bias $\tilde \rho=d_Q([t_0],[m_\star])$ is the function of $s$ and $d=\mbox{dist}(t_0,L)$ given by:
\begin{equation}
\tilde \rho(d,s) = s\frac{2}{\pi} \int_{\frac{d}{s}}^{+\infini} r^2\exp\left(-\frac{r^2}{2}\right)g\left(\frac{d}{rs}\right)dr,
\label{item2}
\end{equation}
where $g$ is a non-negative function on $[0,1]$ defined by $g(x)=\sin(\arccos(x))-x\arccos(x)$.
\begin{enumerate}
\item \label{itema} If $d> 0$ then $s\mapsto \tilde \rho(d,s)$ has an asymptotic linear expansion:
\begin{equation}
\tilde \rho(d,s) \underset{s\to \infini}{\sim} s\frac2\pi \int_0^{+\infini}r^2 \exp\left(-\frac{r^2}2\right)dr.
\end{equation}
\item \label{itemb}If $d>0$, then $\tilde\rho(d,s)=o(s^k)$ when $s\to 0$, for all $k\in \N$.
\item \label{itemc}$s\mapsto \tilde \rho(0,s)$ is linear with respect to $s$ (for $d=0$ the template is a fixed point).
\end{enumerate}
\end{enumerate}
\end{proposition}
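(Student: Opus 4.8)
The plan is to exploit the fact that here the group is just $\Z/2\Z$ acting as the reflection across $L$, which becomes diagonal in a well chosen orthonormal frame. Concretely, I would introduce $e_1=\frac1{\sqrt2}(1,1)^T$ (a unit vector along $L$) and $e_2=\frac1{\sqrt2}(-1,1)^T$ (orthogonal to $L$, pointing into $HP_A$), and write points as $(a,b)$ in this frame. Then the nontrivial element $g=1$ acts by $(a,b)\mapsto(a,-b)$, the line $L$ is $\{b=0\}$, and $\overline{HP_A}=HP_A\cup L=\{b\ge 0\}$. Writing $X=(A,B)$ and $m=(\alpha,\beta)$, a one-line computation gives $\|X-m\|^2=(A-\alpha)^2+(B-\beta)^2$ and $\|g\cdot X-m\|^2=(A-\alpha)^2+(B+\beta)^2$, so that $d_Q([X],[m])^2=(A-\alpha)^2+(|B|-|\beta|)^2$. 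Taking expectations, the quotient variance splits into two independent one-dimensional problems, $F(m)=\E\big[(A-\alpha)^2\big]+\E\big[(|B|-|\beta|)^2\big]$.

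For item~\ref{item1}, I minimise each term separately: the first is minimal at $\alpha=\E(A)$, the second at the unique value $|\beta|=\E(|B|)$. Hence the set of global minimisers of $F$ is always the single orbit $\{(\E(A),\pm\E(|B|))\}$, and since $\E(A)$ is the first coordinate of $t_0$, the consistency bias reduces to the scalar quantity $\E(|B|)-|b_0|$, where $b_0$ is the signed distance of $t_0$ to $L$. By Jensen's inequality $\E(|B|)\ge|\E(B)|=|b_0|$, with equality if and only if $B$ keeps a constant sign almost surely. When $t_0\in HP_A$ (so $b_0>0$) this forces $B\ge0$ a.s., i.e. the support of $X$ lies in $\overline{HP_A}$; when $t_0\in L$ (so $b_0=0$) it forces $B=0$ a.s., i.e. the support lies in $L$. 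This yields exactly the stated equivalences ($HP_B$ being symmetric), and recovers \cref{theo} and \cref{kar} as the two implications in this planar model.

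For item~\ref{item2}, with $X\sim\Nc(t_0,s^2Id_2)$ the rotational invariance of the isotropic Gaussian makes $A,B$ independent with $B\sim\Nc(d,s^2)$, $d=\mathrm{dist}(t_0,L)$ (WLOG $t_0\in\overline{HP_A}$). The minimisers $(\E(A),\pm\E(|B|))$ are exchanged by $g$, hence form one orbit, giving uniqueness of the Fréchet mean (existence is the coercivity argument of the section); this orbit lies on the line through $t_0$ orthogonal to $L$, and $\tilde\rho=\E(|B|)-d$. To reach \eqref{item2} I would pass to polar coordinates centred at $t_0$, $X=t_0+sr(\cos\theta,\sin\theta)$ with $r$ of density $re^{-r^2/2}$ and $\theta$ uniform, so $B=d+sr\sin\theta$. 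Using $\E(B)=d$ and $|x|-x=-2x\,\fc_{x<0}$, the bias equals $\E[|B|-B]$, whose polar form reduces to integrating $-2(d+sr\sin\theta)$ over the arc $\{\sin\theta<-d/(sr)\}$, which is nonempty precisely when $r\ge d/s$. Evaluating that arc integral and invoking the identity $\pi-2\arcsin x=2\arccos x$ collapses the angular factor into $2g(d/(rs))$ with $g(x)=\sqrt{1-x^2}-x\arccos x=\sin(\arccos x)-x\arccos x$, producing exactly \eqref{item2}.

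The asymptotics follow from this integral. For~\ref{itemc}, $d=0$ gives $g(0)=1$ and $\tilde\rho(0,s)=s\frac2\pi\int_0^{\infini}r^2e^{-r^2/2}dr=s\sqrt{2/\pi}$, linear in $s$. For~\ref{itema}, as $s\to\infini$ with $d>0$ the lower limit $d/s\to0$ and $g(d/(rs))\to1$ pointwise while staying in $[0,1]$, so dominated convergence (dominating by $r^2e^{-r^2/2}$) gives the stated linear equivalent. For~\ref{itemb}, as $s\to0$ with $d>0$ the integrand is supported on $r\ge d/s$, and bounding $g\le1$ yields $\tilde\rho\le s\frac2\pi\int_{d/s}^{\infini}r^2e^{-r^2/2}dr$, a tail decaying like $e^{-d^2/(2s^2)}$ and hence $o(s^k)$ for every $k$. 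The main obstacle is the careful polar computation in item~\ref{item2}: correctly identifying the fold region $\{B<0\}$ as an angular arc, evaluating its integral, and recognising the trigonometric identity that turns it into $g$; the separation of variables, the Jensen argument, and the two limits are then routine.
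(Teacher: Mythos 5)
Your proof is correct; every step you leave as a computation checks out, in particular the arc integral $\frac{1}{2\pi}\int_{\{\sin\theta<-d/(sr)\}}-2(d+sr\sin\theta)\,d\theta=\frac{2sr}{\pi}\,g\!\left(\frac{d}{sr}\right)$ for $r>d/s$, which, integrated against the radial density $re^{-r^2/2}$, gives \eqref{item2} exactly. In substance you follow the same strategy as the paper: recognize that the quotient metric folds the plane across $L$, identify the Fréchet mean with the mean of the folded variable, and evaluate the Gaussian case in polar coordinates through the identity $\pi-2\arcsin x=2\arccos x$. But your execution is genuinely different. The paper stays in the original coordinates: it defines the folded variable $Z=X\fc_{X\in HP_A\cup L}+1\cdot X\fc_{X\in HP_B}$, proves $F(m)=\E(\|Z-m\|^2)$ on $HP_A\cup L$ by a case analysis of the optimal registration $\tau(x,m)$, characterizes consistency through the vector condition $\E(X\fc_{X\in HP_B\cup L})\in L$, and computes $\E(Z)$ as a planar integral split over three regions. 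Your change of basis diagonalizes the reflection, so the variance separates as $F(m)=\E[(A-\alpha)^2]+\E[(|B|-|\beta|)^2]$. This buys three things: the set of minimisers $\{(\E A,\pm\E|B|)\}$, hence existence and uniqueness of the Fréchet mean for \emph{any} square-integrable $X$ and not only the Gaussian, is immediate (you do not even need the section's coercivity argument that you invoke, since your explicit minimisers give existence directly); point~1 becomes the equality case of Jensen's inequality $\E|B|\ge|\E B|$, correctly strengthened to $B=0$ a.s. when $t_0\in L$; and the Gaussian computation reduces to the scalar quantity $\E[|B|]-d=\E[-2B\fc_{B<0}]$, an integral over the fold region only, instead of the paper's three-region vector computation of $\E(Z)$. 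The asymptotics are handled the same way in both proofs (dominated convergence using $0\le g\le1$, the Gaussian tail bound for $s\to0$, and $g\equiv g(0)=1$ when $d=0$); your value $\tilde\rho(0,s)=s\sqrt{2/\pi}$ agrees with the half-normal mean, a sanity check the paper does not make explicit.
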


\begin{remark} 
Here, contrarily to the case of the action of rotation in~\cite{mio2}, it is not the ratio $\|\E (X)\|$ over the noise which matters to estimate the consistency bias. Rather the ratio $\mbox{dist}(\E(X),L)$ over the noise. However in both cases we measure the distance between the signal and the singularities which was $\{0\}$ in \cite{mio2} for the action of rotations, $L$ in this case.
\end{remark}

\section{Inconsistency for any group when the template is not a fixed point}
\label{sec:general}
In \cref{sec:bias} we exhibited sufficient condition to have an inconsistency, restricted to the case of finite group acting on an Euclidean space. We now generalize this analysis to Hilbert spaces of any dimension included infinite. Let $M$ be such an Hilbert space with its dot product noted by $\psh{\quad}{\quad}$ and its associated norm $\|\quad\|$. In this section, we do not anymore suppose that the group $G$ is finite. In the following, we prove that there is an inconsistency in a large number of situations, and we quantify the consistency bias with lower and upper bounds.

\begin{example}
The action of continuous translation: We take $G=(\R/\Z)^D$ acting on $M=\L^2((\R/\Z)^D,\R)$ with:
\begin{equation*}
\forall \tau\in G\quad \forall f\in M \quad
(\tau\cdot f):t \mapsto f(t+\tau)
\end{equation*}
This isometric action is the continuous version of the \cref{ex2}: the elements of $M$ are now continuous images in dimension $D$.
\end{example}


\subsection{Presence of an inconsistency}
\label{subsec:gbias}
We state here a generalization of \cref{theo}:
\begin{theorem}
\label{theogeneral}
Let $G$ be a group acting isometrically on $M$ an Hilbert space, and $X$ a random variable in $M$,  $\E(\|X\|^2)<+\infini$ and $\E(X)=t_0\neq 0$. If:
\begin{equation}
    \P\left( d_Q([t_0],[X])<\|t_0-X\| \right)>0,
    \label{derive2}
\end{equation}
or equivalently:
\begin{equation}
\P\left( \underset{g\in G}{\sup} \psh{g\cdot X}{t_0}> \psh{X}{t_0}\right)>0.
\label{condition}
\end{equation}
Then $[t_0]$ is not a Fréchet mean of $[X]$ in $Q=M/G$.
\end{theorem}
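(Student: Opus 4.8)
The plan is to reduce the whole problem to a one–dimensional minimisation of the variance $F$ along the ray $\R_{>0}\,t_0$. First I would record the equivalence between~\eqref{derive2} and~\eqref{condition}: since the action is isometric, expanding $\|t_0-g\cdot X\|^2=\|t_0\|^2+\|X\|^2-2\psh{t_0}{g\cdot X}$ and taking the infimum over $g$ turns the quotient distance into a supremum of inner products. Writing $\phi(X)=\underset{g\in G}{\sup}\,\psh{g\cdot X}{t_0}$, one gets $d_Q([t_0],[X])^2=\|t_0\|^2+\|X\|^2-2\phi(X)$, while $\|t_0-X\|^2=\|t_0\|^2+\|X\|^2-2\psh{X}{t_0}$; hence both conditions say exactly that $\phi(X)>\psh{X}{t_0}$ with positive probability. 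At this stage I would also dispose of the measurability and integrability of $\phi(X)$: it equals $\tfrac12\bigl(\|t_0\|^2+\|X\|^2-d_Q([t_0],[X])^2\bigr)$, hence is measurable, and the Cauchy–Schwarz bound $\phi(X)\le\|t_0\|\,\|X\|$ together with $\E(\|X\|)\le\sqrt{\E(\|X\|^2)}<+\infini$ yields $\E(\phi(X))<+\infini$.

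The key observation is that the same expansion gives, for every $\lambda>0$, the \emph{exact} formula $d_Q([\lambda t_0],[X])^2=\lambda^2\|t_0\|^2+\|X\|^2-2\lambda\,\phi(X)$, because the coefficient $-2\lambda<0$ of $\psh{t_0}{g\cdot X}$ makes the infimum over $g$ attained in value at $\underset{g}{\sup}\,\psh{t_0}{g\cdot X}=\phi(X)$, independently of $\lambda$. Taking expectations produces a clean quadratic along the ray, $\Phi(\lambda):=F(\lambda t_0)=\lambda^2\|t_0\|^2-2\lambda\,\E(\phi(X))+\E(\|X\|^2)$. It then remains to compare $\E(\phi(X))$ with $\|t_0\|^2$. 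Choosing $g=e_G$ shows $\phi(X)\ge\psh{X}{t_0}$ almost surely, and $\E(\psh{X}{t_0})=\psh{\E(X)}{t_0}=\|t_0\|^2$; since by~\eqref{condition} the nonnegative gap $\phi(X)-\psh{X}{t_0}$ is strictly positive on a set of positive probability, we obtain the strict inequality $\E(\phi(X))>\|t_0\|^2$.

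To conclude, I would use that $t_0\neq 0$, so $\|t_0\|^2>0$ and $\Phi$ is strictly convex with minimiser $\lambda^\star=\E(\phi(X))/\|t_0\|^2>1$. Consequently $F(\lambda^\star t_0)=\Phi(\lambda^\star)<\Phi(1)=F(t_0)$, which shows that $[t_0]$ is not a Fréchet mean; in fact $\Phi'(1)=2\bigl(\|t_0\|^2-\E(\phi(X))\bigr)<0$, so moving slightly outward along $t_0$ already lowers the variance, ruling out even a local minimum. The only genuinely delicate points are the measurability and integrability of $\phi(X)$ (handled through its expression via $d_Q$) and the justification that the infimum defining $d_Q([\lambda t_0],[X])^2$ may be replaced by $\phi(X)$ uniformly in $\lambda>0$; everything else reduces to the strict convexity of a one–variable quadratic. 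The crucial role of the hypothesis $t_0\neq 0$ is precisely to guarantee this strict convexity, so that $\lambda^\star\neq 1$ forces a strict decrease of $F$.
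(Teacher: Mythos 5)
Your proposal is correct and follows essentially the same route as the paper's own proof: restricting the variance $F$ to the ray $\R^{+}t_0$, using the isometric action to expand it as a quadratic $\lambda^2\|t_0\|^2-2\lambda\,\E\bigl(\sup_{g\in G}\psh{g\cdot X}{t_0}\bigr)+\E(\|X\|^2)$, deducing from the hypothesis that the minimiser $\lambda^\star$ (the paper's $a_\star$) is strictly greater than $1$, and concluding $F(\lambda^\star t_0)<F(t_0)$. The only difference is that you explicitly verify the measurability and integrability of $\sup_{g\in G}\psh{g\cdot X}{t_0}$, a point the paper leaves implicit; this is a welcome refinement, not a different argument.
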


The condition of this Theorem is the same condition of \cref{theo}: the support of the law of $X$ contains points closer from $gt_0$ for some $g$ than $t_0$. Thus the condition~\eqref{condition} is equivalent to $\E(d_Q([X],[t_0])^2)<\E(\|X-t_0\|^2)$. In other words, the variance in the quotient space at $t_0$ is strictly smaller than the variance in the top space at $t_0$.

\begin{proof}
First the two conditions are equivalent by definition of the quotient distance and by expansion of the square norm of $\|t_0-X\|$ and of $\|t_0-gX\|$ for $g\in G$. 

As above, we define the variance of $[X]$ by:
\begin{equation*}
F(m)=\E\left( \underset{g\in G}{\inf} \|g\cdot X-m\|^2\right).
\end{equation*}
In order to prove this Theorem, we find a point $m$ such that $F(m)<F(t_0)$, which directly implies that $[t_0]$ is not be a Fréchet mean of $[X]$. 

In the proof of \cref{theo}, we showed that under condition~\eqref{cone} we had $\psh{\nabla F(t_0)}{t_0}<0$. This leads us to study $F$ restricted to $\R^+t_0$: we define for $a\in \R^+$ $f(a)=F(at_0)
=\E( \inf_{g\in G} \|g\cdot X-a\|^2)$.
Thanks to the isometric action we can expand $f(a)$ by:
\begin{equation}
f(a)=a^2 \|t_0\|^2-2a \E\left( \underset{g\in G}{\sup} \psh{g\cdot X}{t_0}\right)+\E(\|X\|^2), 
\label{ef}
\end{equation}
and explicit the unique element of $\R^+$ which minimises $f$:
\begin{equation}
a_\star=\cfrac{\E\left( \underset{g\in G}{\sup} \psh{g\cdot X}{t_0}\right)}{\|t_0\|^2}.
\label{astar}
\end{equation}
For all $x\in M$, we have $\underset{g\in G}{\sup} \psh{g\cdot x}{t_0}\geq \psh{x}{t_0}$ and thanks to condition~\eqref{condition} we get:
\begin{equation}
\E( \underset{g\in G}{\sup} \psh{g\cdot X}{t_0})> \E(\psh{X}{t_0})=\psh{\E(X)}{t_0}=\|t_0\|^2,
\label{derive}
\end{equation}
which implies $a_\star>1$. Then $F(a_\star t_0)<F(t_0)$. \qquad
\end{proof}

Note that $\|t_0\|^2(a_\star-1)=\E\left( \sup_{g\in G} \psh{g\cdot X}{t_0}\right)- \E(\psh{X}{t_0})$ (which is positive) is exactly $-\psh{\nabla F(t_0)}{t_0}/2$ in the case of finite group, see Equation~\eqref{nablaJx}. Here we find the same expression without having to differentiate the variance $F$, which may be not possible in the current setting.

\subsection{Analysis of the condition in \cref{theogeneral}}
We now look for general cases when we are sure that Equation~\eqref{condition} holds which implies the presence of inconsistency. We saw in \cref{sec:bias} that when the group was finite, it is possible to have no inconsistency only if the support of the law is included in a cone delimited by some hyperplanes. The hyperplanes were defined as the set of points equally distant of the template $t_0$ and $g\cdot t_0$ for $g\in G$. Therefore if the cardinal of the group becomes more and more important, one could think that in order to have no inconsistency the space where $X$ should takes value becomes smaller and smaller. At the limit it leaves only at most an hyperplane. In the following, we formalise this idea to make it rigorous. We show that the cases where \cref{theogeneral} cannot be applied are not generic cases.

First we can notice that it is not possible to have the condition~\eqref{condition} if $t_0$ is a fixed point under the action of $G$. Indeed in this case $\psh{g\cdot X}{t_0}=\psh{X}{g^{-1}t_0}=\psh{X}{t_0}$). So from now, we suppose that $t_0$ is not a fixed point. Now let us see some settings when we have the condition~\eqref{derive2} and thus condition~\eqref{condition}. 

\begin{proposition}
\label{denseorbit}
Let $G$ be a group acting isometrically on an Hilbert space $M$, and $X$ a random variable in $M$, with $\E(\|X\|^2)<+\infini$ and $\E(X)=t_0 \neq 0$. If:
\begin{enumerate} 
\item $[t_0]\setminus \{t_0\}$ is a dense set in $[t_0]$.
\item There exists $\eta>0$ such that the support of $X$ contains a ball $B(t_0,\eta)$.
\end{enumerate}
Then condition~\eqref{condition} holds, and the estimator is inconsistent according to \cref{theogeneral}.
\end{proposition}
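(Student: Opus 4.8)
The plan is to verify condition~\eqref{condition} directly and then invoke \cref{theogeneral}. The first step is to rewrite the inner product using the isometry of the action: since each $g$ acts as an orthogonal linear map, its adjoint is $g^{-1}$, so $\psh{g\cdot X}{t_0}=\psh{X}{g^{-1}\cdot t_0}$. Letting $h=g^{-1}$ range over $G$, condition~\eqref{condition} becomes
\[
\P\left(\sup_{h\in G}\psh{X}{h\cdot t_0}>\psh{X}{t_0}\right)>0.
\]
Hence it suffices to produce a single orbit point $u=h\cdot t_0$ with $u\neq t_0$ together with a positive-probability event on which $\psh{X}{u}>\psh{X}{t_0}$, i.e. $\psh{X}{u-t_0}>0$; on that event the supremum is automatically strictly larger than $\psh{X}{t_0}$.

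Second, I would exploit hypothesis~1 and the isometry jointly. Since $[t_0]\setminus\{t_0\}$ is dense in $[t_0]$, the point $t_0$ is a limit of orbit points distinct from it, so for the given $\eta>0$ there exists $u\in[t_0]$ with $0<\|u-t_0\|<2\eta$. The key geometric observation is that, because the action preserves the norm, $\|u\|=\|t_0\|$, whence $\psh{t_0}{u-t_0}=\psh{t_0}{u}-\|t_0\|^2=-\tfrac12\|u-t_0\|^2$. Consequently the bounding hyperplane $\{x:\psh{x}{u-t_0}=0\}$ of the open half-space $H=\{x:\psh{x}{u-t_0}>0\}$ lies at distance exactly $\tfrac12\|u-t_0\|<\eta$ from $t_0$, with $t_0$ on its negative side.

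Third, I would combine this with hypothesis~2. Because the hyperplane is closer to $t_0$ than $\eta$, the set $H\cap B(t_0,\eta)$ is a nonempty open set: writing $n=(u-t_0)/\|u-t_0\|$, the points $t_0+\lambda n$ with $\lambda\in(\tfrac12\|u-t_0\|,\eta)$ lie in the ball yet satisfy $\psh{t_0+\lambda n}{u-t_0}=-\tfrac12\|u-t_0\|^2+\lambda\|u-t_0\|>0$. Since $B(t_0,\eta)$ is contained in the support of $X$, every open set meeting this ball carries positive mass: choosing $x\in H\cap B(t_0,\eta)$ and a small $r$ with $B(x,r)\subseteq H$, the definition of the support gives $\P(X\in H)\ge\P(X\in B(x,r))>0$. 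On the event $\{X\in H\}$ we have $\psh{X}{u}>\psh{X}{t_0}$, so $\sup_{h\in G}\psh{X}{h\cdot t_0}>\psh{X}{t_0}$, which is exactly condition~\eqref{condition}. \cref{theogeneral} then delivers the inconsistency.

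The steps needing care are the distance computation, which is the precise link forcing the two hypotheses to cooperate, and the passage from ``the support contains $B(t_0,\eta)$'' to ``open sets meeting $B(t_0,\eta)$ have positive probability,'' which relies only on the definition of the support. I expect the main subtlety to be ensuring the chosen $u$ is genuinely distinct from $t_0$ while $\|u-t_0\|$ is small enough: this is exactly what density of $[t_0]\setminus\{t_0\}$ provides, and it is the reason the hypothesis cannot be dropped. Everything else is routine.
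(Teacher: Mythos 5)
Your proof is correct and follows essentially the same route as the paper: density of $[t_0]\setminus\{t_0\}$ produces an orbit point $u=h\cdot t_0$ near $t_0$, the isometry pins down the bisecting hyperplane, and the support hypothesis yields positive mass on the set of points strictly closer to $u$ than to $t_0$. The only cosmetic difference is that you verify the inner-product form~\eqref{condition} via the half-space $\{x:\psh{x}{u-t_0}>0\}$ intersected with $B(t_0,\eta)$, whereas the paper verifies the equivalent distance form~\eqref{derive2} by placing a small ball of radius $r<\min\left(\|g\cdot t_0-t_0\|/2,\:\eta-\|g\cdot t_0-t_0\|\right)$ around $g\cdot t_0$ inside $B(t_0,\eta)$.
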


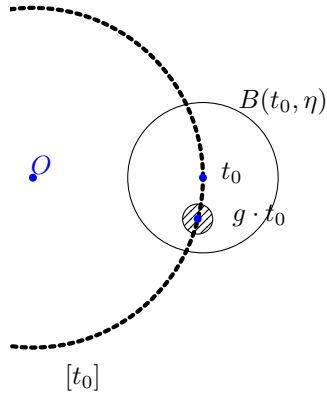
\begin{figure}[!ht]\centering\begin{tikzpicture}[line cap=round,line join=round,>=triangle 45,x=1.0cm,y=1.0cm]
\clip(1.7,-3.07) rectangle (6.13,2.47);
\draw [line width=1.6pt,dotted] (2,0) circle (2.26cm);
\draw(4.26,0) circle (1cm);
\draw (2.3,-2.36) node[anchor=north west] {$[t_0]$};
\draw (4.38,0.32) node[anchor=north west] {$t_0$};
\draw (4.61,1.32) node[anchor=north west] {$B(t_0,\eta)$};
\draw (4.53,-0.24) node[anchor=north west] {$g\cdot t_0$};
\draw [fill=black,pattern=north east lines] (4.19,-0.55) circle (0.2cm);
\fill [color=blue] (2,0) circle (1.5pt);
\draw[color=blue] (2.11,0.17) node {$O$};
\fill [color=blue] (4.26,0) circle (1.5pt);
\fill [color=blue] (4.19,-0.55) circle (1.5pt);
\end{tikzpicture}\caption[Inconsistency for the template which is an accumulation point.]{The smallest disk is included in the support of $X$ and the points in that disk is closer from $g\cdot t_0$ than from $t_0$. According to \cref{theogeneral} there is an inconsistency.}\label{fig:sphere2}\end{figure}

\begin{proof}
 By density, one takes $g\cdot t_0\in B(t_0,\eta)\setminus \{t_0\}$ for some $g\in G$, now if we take $r<\min (\|g\cdot t_0-t_0\|/2,\eta-\|g\cdot t_0-t_0\|)$ then $B(g\cdot t_0,r)\subset B(t_0,\epsilon)$. Therefore by the assumption we made on the support one has $\P(X\in B(g\cdot t_0,r))>0$. For $y\in B(g\cdot t_0,r)$ we have that $\|gt_0-y\|<\|t_0-y\|$ (see \cref{fig:sphere2}). Then we have: $\P\left( d_Q([X],[t_0])<\|X-t_0\|\right)\geq \P(X\in B(g\cdot t_0,r))>0$. Then we verify condition~\eqref{condition}, and we can apply \cref{theogeneral}.
\qquad
\end{proof}

\Cref{denseorbit} proves that there is a large number of cases where we can ensure the presence of an inconsistency. For instance when $M$ is a finite dimensional vector space and the random variable $X$ has a continuous positive density (for the Lebesgue's measure) at $t_0$, condition~2 of \Cref{denseorbit} is fulfilled. Unfortunately this proposition do not cover the case where there is no mass at the expected value $t_0=\E(X)$. This situation could appear if $X$ has two modes for instance. The following proposition deals with this situation:
\begin{proposition}
\label{propcurv}
Let $G$ be a group acting isometrically on $M$. Let $X$ be a random variable in $M$, such that $\E(\|X\|^2)<+\infini$ and $\E(X)=t_0\neq 0$. If:
\begin{enumerate}
\item $\exists \varphi \mbox{ s.t. } 
\varphi:(-a,a)\to  [t_0] \mbox{ is }\Cc^1 \mbox{ with } \varphi(0)=t_0, \varphi'(0)=v\neq 0.$
\item The support of $X$ is not included in the hyperplane $v^\perp$: $\P(X\notin v^\perp)>0$.
\end{enumerate}
Then condition~\eqref{condition} is fulfilled, which leads to an inconsistency thanks to \Cref{theogeneral}.
\end{proposition}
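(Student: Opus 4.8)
The plan is to show that condition~\eqref{condition} holds by exhibiting, with positive probability, a point $x$ in the support of $X$ together with a group element $g$ for which $\psh{g\cdot x}{t_0}>\psh{x}{t_0}$, exactly as in the proof of \cref{denseorbit}. The curve $\varphi$ plays the role that the density of the orbit played there: it produces nearby orbit points $\varphi(s)$ which, to first order, move away from $t_0$ in the direction $v$, and this first-order displacement is what lets us beat the inner product at $t_0$.

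First I would fix a point $x$ with $\psh{x}{v}\neq 0$; such an $x$ exists in the support of $X$ by hypothesis~2, since $\P(X\notin v^\perp)>0$ means the support is not contained in $v^\perp$. Replacing $v$ by $-v$ (equivalently looking at $\varphi(-s)$) we may assume $\psh{x}{v}>0$. Now I would compute, for small $s$, the quantity $\psh{g_s\cdot t_0}{\,\cdot\,}$ the other way around: since the action is isometric hence by a dot-product the adjoint of $g$ equals $g^{-1}$, we have $\sup_{g}\psh{g\cdot x}{t_0}=\sup_{g}\psh{x}{g^{-1}\cdot t_0}$, so it suffices to find an orbit point $\varphi(s)=g^{-1}\cdot t_0$ in $[t_0]$ with $\psh{x}{\varphi(s)}>\psh{x}{t_0}=\psh{x}{\varphi(0)}$. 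Differentiating $s\mapsto \psh{x}{\varphi(s)}$ at $0$ gives $\psh{x}{\varphi'(0)}=\psh{x}{v}>0$, so for $s>0$ small enough $\psh{x}{\varphi(s)}>\psh{x}{\varphi(0)}$, i.e.\ $\sup_{g\in G}\psh{g\cdot x}{t_0}>\psh{x}{t_0}$ at this particular $x$.

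The remaining and genuinely delicate step is to promote this pointwise strict inequality at a single $x$ to a statement of positive probability, because the support hypothesis only guarantees mass near such points, not at a single point. Here I would argue by continuity: the map $x\mapsto \psh{x}{v}$ is continuous, so the set $U=\{x:\psh{x}{v}>0\}$ is open and, being a half-space meeting the support, satisfies $\P(X\in U)>0$ by definition of the support. For every $x\in U$ the derivative argument above applies verbatim, yielding $\sup_{g}\psh{g\cdot x}{t_0}>\psh{x}{t_0}$; hence
\begin{equation*}
\P\left(\underset{g\in G}{\sup}\psh{g\cdot X}{t_0}>\psh{X}{t_0}\right)\geq \P(X\in U)>0,
\end{equation*}
which is exactly condition~\eqref{condition}. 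I expect the main obstacle to be handling the choice of sign of $v$ and the measurability/positivity of the relevant half-space cleanly — in particular making sure that the open set on which the first-order argument succeeds genuinely carries positive mass rather than merely containing an isolated favourable point. Once condition~\eqref{condition} is established, \cref{theogeneral} applies directly and concludes that $[t_0]$ is not a Fréchet mean of $[X]$.
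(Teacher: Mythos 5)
Your proof is correct and follows essentially the same route as the paper: a first-order argument along the curve $\varphi$, using the sign of $\psh{X}{v}$ together with the definition of the support to turn a pointwise strict inequality into one holding with positive probability, and then invoking \Cref{theogeneral}. The only cosmetic differences are that you verify condition~\eqref{condition} directly (via the adjoint identity $\psh{g\cdot x}{t_0}=\psh{x}{g^{-1}\cdot t_0}$ and the open half-space $\{x:\psh{x}{v}>0\}$), whereas the paper verifies the equivalent condition~\eqref{derive2} by expanding $\|\varphi(x)-y\|^2$ and passing to a small ball around a single support point, which requires its preliminary remark that $\psh{t_0}{v}=0$ — a fact your formulation never needs.
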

\begin{proof}
Thanks to the isometric action: $\psh{t_0}v=0$. We choose $y\notin v^\perp$ in the support of $X$ and make a Taylor expansion of the following square distance (see also \Cref{fig:sphere}) at $0$:
\begin{equation*}
\|\varphi(x)-y\|^2=\|t_0+xv+o(x)-y\|^2=\|t_0-y\|^2-2x\psh{y}{v}+o(x).
\end{equation*}
Then: $\exists x_\star\in (-a,a) \mbox{ s.t. } \|x_\star\|<a$, $x\psh{y}{v}>0$ and $\|\varphi(x_\star)-y\|<\|t_0-y\|$. For some $g\in G$, $\varphi(x_\star)=g\cdot t_0$. By continuity of the norm we have:
\begin{equation*}
\exists r>0\mbox{ s.t. } \forall z\in B(y,r)\quad \|g\cdot t_0-z\|<\|t_0-z\|.
\end{equation*}
Then $\P(\|g\cdot t_0-X\|<\|t_0-X\|)\geq \P(X\in B(y,r))>0$. \Cref{theogeneral} applies.
\end{proof}

\Cref{propcurv} was a sufficient condition on inconsistency in the case of an orbit which contains a curve. This brings us to extend this result for orbits which are manifolds:
\begin{proposition}
\label{biasmanifold}
Let $G$ be a group acting isometrically on an Hilbert space $M$, $X$ a random variable in $M$, with $\E(\|X\|^2)<+\infini$. Assume $X=t_0+\sigma \epsilon$, where $t_0\neq 0$ and $\E(\epsilon)=0$, and $\E(\|\epsilon\|)=1$. We suppose that $[t_0]$ is a sub-manifold of $M$ and write $T_{t_0}[t_0]$ the linear tangent space of $[t_0]$ at $t_0$. If:
\begin{equation}
\P(X\notin T_{t_0}[t_0]^{\perp})>0,
\label{perpx}
\end{equation}
which is equivalent to:
\begin{equation}
\P(\epsilon\notin T_{t_0}[t_0]^\perp)>0,
\label{perpe}
\end{equation}
then there is an inconsistency.
\end{proposition}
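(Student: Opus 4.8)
The plan is to reduce this statement to \cref{propcurv}, whose hypotheses require producing a single nonzero tangent direction $v\in T_{t_0}[t_0]$ together with a $\Cc^1$ curve in the orbit realizing it, and checking that the support of $X$ escapes $v^\perp$. Since \eqref{perpx} says precisely that the support of $X$ is not contained in $T_{t_0}[t_0]^\perp$, the real work is to convert ``not in the orthogonal complement of the whole tangent space'' into ``not in the orthogonal complement of one well-chosen direction''.

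First I would record the geometric fact underlying everything: because the action is isometric, the whole orbit $[t_0]$ lies on the sphere of radius $\|t_0\|$ centered at the origin, so $T_{t_0}[t_0]$ is contained in the tangent space of that sphere at $t_0$, namely $t_0^\perp$; equivalently $t_0\in T_{t_0}[t_0]^\perp$. As $T_{t_0}[t_0]^\perp$ is a closed linear subspace containing $t_0$ and $\sigma>0$, the events $\{X\in T_{t_0}[t_0]^\perp\}$ and $\{\epsilon\in T_{t_0}[t_0]^\perp\}$ coincide, which gives the announced equivalence between \eqref{perpx} and \eqref{perpe}.

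Next, let $P$ denote the orthogonal projection onto the closed subspace $T_{t_0}[t_0]$, so that \eqref{perpx} reads $\P(PX\neq 0)>0$. The set $\{x:Px\neq 0\}$ is open (by continuity of $P$) and has positive probability, hence it must meet the support of $X$; I would then pick a support point $y$ with $Py\neq 0$ and set $v:=Py\in T_{t_0}[t_0]$, so $v\neq 0$. Using that $P$ is a self-adjoint idempotent, $\psh{y}{v}=\|Py\|^2>0$, whence $y\notin v^\perp$. Since $y$ lies in the support and $v^\perp$ is closed, a neighbourhood of $y$ of positive probability avoids $v^\perp$, so $\P(X\notin v^\perp)>0$, which is exactly condition~2 of \cref{propcurv}.

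Finally, since $[t_0]$ is a submanifold and $v$ is a nonzero tangent vector at $t_0$, there is a $\Cc^1$ curve $\varphi:(-a,a)\to[t_0]$ with $\varphi(0)=t_0$ and $\varphi'(0)=v$, supplying condition~1 of \cref{propcurv}. Applying \cref{propcurv} yields condition~\eqref{condition}, and the inconsistency then follows from \cref{theogeneral}. The only genuinely non-routine input is the manifold-theoretic part — that $T_{t_0}[t_0]\subseteq t_0^\perp$ and that every tangent vector is the velocity of a $\Cc^1$ curve in the orbit — which I expect to be the main point to pin down carefully in the (possibly infinite-dimensional) Hilbert setting; the passage from the full tangent space to a single direction $v$ via the projection $P$ is the key simple idea that makes the reduction go through.
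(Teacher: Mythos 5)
Your proof is correct and follows essentially the same route as the paper's: orthogonality of $t_0$ to $T_{t_0}[t_0]$ (from the isometric action) gives the equivalence of \eqref{perpx} and \eqref{perpe}, a support point $y\notin T_{t_0}[t_0]^\perp$ yields a tangent direction $v$ with $\psh{y}{v}\neq 0$ together with a $\Cc^1$ curve realizing it, and \cref{propcurv} then \cref{theogeneral} conclude. The only (cosmetic) difference is that you produce $v$ as the orthogonal projection $Py$ of $y$ onto the tangent space — which additionally requires $T_{t_0}[t_0]$ to be a closed subspace for $P$ to exist in the infinite-dimensional setting — whereas the paper simply takes any $v\in T_{t_0}[t_0]$ with $\psh{y}{v}\neq 0$, which exists by the very definition of $y\notin T_{t_0}[t_0]^\perp$, so no projection (and no closedness assumption) is needed.
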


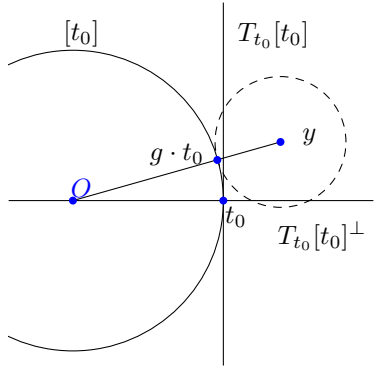
\begin{figure}[!ht]\centering\begin{tikzpicture}[line cap=round,line join=round,>=triangle 45,x=1.0cm,y=1.0cm]
\clip(-0.85,-2.19) rectangle (3.99,2.63);
\draw (2,-2.19) -- (2,2.63);
\draw(0,0) circle (2cm);
\draw (1.9,0.04) node[anchor=north west] {$ t_0 $};
\draw (-0.25,2.53) node[anchor=north west] {$[t_0]$};
\draw [domain=-0.85:3.99] plot(\x,{(-0-0*\x)/2});
\draw (2.07,2.51) node[anchor=north west] {$T_{t_0}[t_0]$};
\draw (2.6,-0.15) node[anchor=north west] {$T_{t_0}[t_0]^\perp$};
\draw (2.93,1.08) node[anchor=north west] {$y$};
\draw (0,0)-- (2.76,0.78);
\draw [dash pattern=on 3pt off 3pt] (2.76,0.78) circle (0.87cm);
\draw (0.9,0.88) node[anchor=north west] {$g\cdot t_0$};
\fill [color=blue] (0,0) circle (1.5pt);
\draw[color=blue] (0.11,0.17) node {$O$};
\fill [color=blue] (2,0) circle (1.5pt);
\fill [color=blue] (2.76,0.78) circle (1.5pt);
\fill [color=blue] (1.92,0.54) circle (1.5pt);
\end{tikzpicture}\caption[Inconsistency when the noise is not included in the Normal space at the template.]{$y\notin T_{t_0}[t_0]^\perp$ therefore $y$ is closer from $g\cdot t_0$ for some $g\in G$ than $t_0$ itself. In conclusion, if $y$ is in the support of $X$, there is an inconsistency.}\label{fig:sphere}\end{figure}

\begin{proof}
First $t_0\perp T_{t_0}[t_0]$ (because the action is isometric), $T_{t_0}[t_0]^\perp= t_0+T_{t_0}[t_0]^\perp$, then the event $\{X\in T_{t_0}[t_0]^\perp\}$ is equal to $\{\epsilon\in T_{t_0}[t_0]^\perp\}$. This proves that equations~\eqref{perpx} and~\eqref{perpe} are equivalent.
Thanks to assumption~\eqref{perpx}, we can choose $y$ in the support of $X$ such that $y\notin T_{t_0}[t_0]^\perp$. Let us take $v\in T_{t_0}[t_0]$ such that $\psh{y}{v}\neq 0$ and choose $\varphi$ a $\Cc^1$ curve in $[t_0]$, such that $\varphi(0)=t_0$ and $\varphi'(0)=v$. Applying \cref{propcurv} we get the inconsistency. \qquad
\end{proof}

Note that Condition~\eqref{perpx} is very weak, because $T_{t_0}[t_0]$ is a strict linear subspace of $M$.

\subsection{Lower bound of the consistency bias}\label{subsec:lowerbound}
Under the assumption of \Cref{theogeneral}, we have an element $a_\star t_0$ such that $F(a_\star t_0)<F(t_0)$ where $F$ is the variance of $[X]$. From this element, we deduce lower bounds of the consistency bias:
\begin{theorem}  
\label{lb}
Let $\delta$ be the unique positive solution of the following equation:
\begin{equation}
\delta^2+2\delta \left(\|t_0\|+\E\|X\|\right)-\|t_0\|^2(a_\star-1)^2=0.
\label{lowerboundb}
\end{equation}
Let $\delta_\star$ be the unique positive solution of the following equation:
\begin{equation}
\delta^2+2\delta \|t_0\|\left(1+\sqrt{1+\sigma^2/\|t_0\|^2}\right)-\|t_0\|^2(a_\star-1)^2=0,
\label{lowerbound}
\end{equation}
where $\sigma^2=\E(\|X-t_0\|^2)$ is the variability of $X$. Then $\delta$ and $\delta_\star$ are two lower bounds of the consistency bias.
\end{theorem}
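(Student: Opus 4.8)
The plan is to exploit the competitor point $a_\star t_0$ already produced in the proof of \cref{theogeneral}, which satisfies $F(a_\star t_0)<F(t_0)$, and to convert the size of the gap $F(t_0)-F(a_\star t_0)$ into a lower bound on $\rho:=d_Q([t_0],[m_\star])$, where $m_\star$ denotes any Fréchet mean of $[X]$ (a global minimiser of $F$). First I would compute this gap exactly. Since $f(a)=F(at_0)$ is the quadratic in~\eqref{ef} minimised at $a_\star$ given by~\eqref{astar}, a direct evaluation gives $F(t_0)-F(a_\star t_0)=f(1)-f(a_\star)=\|t_0\|^2(a_\star-1)^2$. Because $m_\star$ is a global minimiser, $F(m_\star)\leq F(a_\star t_0)$, hence $F(t_0)-F(m_\star)\geq \|t_0\|^2(a_\star-1)^2$: this quantifies how much $F$ must decrease between $t_0$ and the Fréchet mean.

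Next I would bound the same gap from above by a function of $\rho$. Writing $D_0=d_Q([X],[t_0])$ and $D_\star=d_Q([X],[m_\star])$, the quotient triangle inequality gives $|D_0-D_\star|\leq \rho$ and $D_\star\leq D_0+\rho$, whence the pointwise estimate $D_0^2-D_\star^2=(D_0-D_\star)(D_0+D_\star)\leq \rho(2D_0+\rho)$. Taking expectations yields $F(t_0)-F(m_\star)\leq \rho^2+2\rho\,\E(D_0)$. Combined with the previous step this produces the master inequality
$$\rho^2+2\rho\,\E(D_0)-\|t_0\|^2(a_\star-1)^2\geq 0.$$

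Finally I would bound $\E(D_0)$ by elementary means and read off the two stated bounds. Since $D_0\leq \|X-t_0\|\leq \|X\|+\|t_0\|$, we have $\E(D_0)\leq \|t_0\|+\E\|X\|$; and by Jensen, $\E\|X\|\leq \sqrt{\E\|X\|^2}=\sqrt{\|t_0\|^2+\sigma^2}$, so also $\E(D_0)\leq \|t_0\|\bigl(1+\sqrt{1+\sigma^2/\|t_0\|^2}\bigr)$, using $\E\|X\|^2=\|t_0\|^2+\sigma^2$ (which follows from $\E(X)=t_0$). Replacing $\E(D_0)$ in the master inequality by either upper bound $c$ only increases the left-hand side when $\rho\geq 0$, so $\rho^2+2\rho c-\|t_0\|^2(a_\star-1)^2\geq 0$. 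As this quadratic in $\rho$ has a negative constant term, it has exactly one positive root, namely $\delta$ for $c=\|t_0\|+\E\|X\|$ and $\delta_\star$ for $c=\|t_0\|\bigl(1+\sqrt{1+\sigma^2/\|t_0\|^2}\bigr)$; since the nonnegative number $\rho$ lies outside the two roots, we conclude $\rho\geq \delta$ and $\rho\geq \delta_\star$.

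The computations are all elementary; the step requiring the most care is the sign bookkeeping in the pointwise squared-distance estimate. I would deliberately use the loose bound $D_0^2-D_\star^2\leq 2\rho D_0+\rho^2$ rather than the sharper $2\rho D_0-\rho^2$: the loose version yields a quadratic with negative constant term, hence a single positive root and an unconditional lower bound of the advertised form, whereas the sharp version would produce a quadratic whose usefulness is contingent on a discriminant condition. A secondary point to make rigorous is the choice of a representative realising $\rho=d_Q([t_0],[m_\star])$ and the validity of the triangle inequalities it supports, which rests on the standing assumption that $d_Q$ is a genuine distance (closed orbits).
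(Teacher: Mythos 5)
Your proof is correct, and it reaches exactly the stated equations \eqref{lowerboundb} and \eqref{lowerbound}, but the middle step differs from the paper's in a genuine way. Both proofs share the same skeleton: the gap computation $F(t_0)-F(a_\star t_0)=\|t_0\|^2(a_\star-1)^2$ coming from \eqref{ef}--\eqref{astar}, and the final root argument for a quadratic with negative constant term. Where they diverge is in converting that gap into a distance bound. The paper proves a Lipschitz-type estimate in the \emph{top} space, inequality \eqref{variationFb}, by expanding squared norms inside the infimum over $g$, deduces $F(x)>F(a_\star t_0)$ for every $x$ in the top-space ball $B(t_0,\delta)$, and concludes that no point of that ball projects to a Fréchet mean; to get from this to a bound on $d_Q([t_0],[m_\star])$ one must also use the $G$-invariance of $F$, so that every representative $g\cdot m_\star$ is itself a global minimiser excluded from the ball -- a point the paper leaves implicit. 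Your argument instead works entirely in the quotient metric: the pointwise estimate $D_0^2-D_\star^2\leq \rho(2D_0+\rho)$, obtained from the triangle inequality for $d_Q$ with $D_0=d_Q([X],[t_0])$ and $D_\star=d_Q([X],[m_\star])$, followed by $\E(D_0)\leq \E\|X-t_0\|\leq \|t_0\|+\E\|X\|$ and Jensen, yields the quadratic inequality in $\rho=d_Q([t_0],[m_\star])$ directly, with no ball, no representative, and no invariance bookkeeping (your closing concern about choosing a representative realising $\rho$ is actually moot: the triangle inequality you use holds even for the pseudo-distance). What your route buys is a purely metric and slightly more self-contained argument, which would survive in any invariant-distance setting; what the paper's route buys is the estimate \eqref{variationFb}, valid at every $x\in M$ and not just at minimisers, which is the same device used elsewhere in the paper (for the differentiability of $F$ and the empirical version of the bound). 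Your sign analysis in the pointwise estimate, and the observation that the loose bound $\rho^2+2\rho D_0$ guarantees a single positive root unconditionally, are both correct.
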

\begin{proof}
In order to prove this Theorem, we exhibit a ball around $t_0$ such that the points on this ball have a variance bigger than the variance at the point $a_\star t_0$, where $a_\star$ was defined in Equation~\eqref{astar}: 
thanks to the expansion of the function $f$ we did in~\eqref{ef} we get :
\begin{equation}
F(t_0)-F(a_\star t_0)= \|t_0\|^2(a_\star-1)^2>0,
\label{delta}
\end{equation}
Moreover we can show (exactly like equation~\eqref{variation}) that for all $x\in M$:
\begin{align}
\left| F(t_0)-F(x)\right|&\leq 
\E\left(\left|\underset{g\in G}{\inf} \|g\cdot X-t_0\|^2-\underset{g\in G}{\inf} \|g\cdot X-x\|^2 \right|\right) \nonumber \\
&\leq \|x-t_0\| \left( 2\|t_0\|+\|x-t_0\|+\E(\|2X\|)\right).
\label{variationFb}
\end{align}

With Equations~\eqref{delta} and~\eqref{variationFb}, for all $x\in B(t_0,\delta)$ we have $F(x)>F(a_\star t_0)$. No point in that ball mapped in the quotient space is a Fréchet mean of $[X]$. So $\delta$ is a lower bound of the consistency bias. Now by using the fact that $\E(\|X\|)\leq \sqrt{\|t_0\|^2+\sigma^2}$, we get:
$2|F(t_0)-F(x)|\leq 2\|x-t_0\|\times \|t_0\|\left(1+\sqrt{1+\sigma^2/\|t_0\|^2}\right)+\|x-t_0\|^2
$. This proves that $\delta_\star$ is also a lower bound of the consistency bias.\qquad \end{proof}

$\delta_\star$ is smaller than $\delta$, but the variability of $X$ intervenes in $\delta_\star$. Therefore we propose to study the asymptotic behaviour of $\delta_\star$ when the variability tends to infinity. We have the following proposition:
\begin{proposition}
\label{sigmainfini}
Under the hypotheses of \Cref{lb}, we write $X=t_0+\sigma \epsilon$, with $\E(\epsilon)=0$, and $\E(\|\epsilon\|^2)=1$ and note $\nu=\E(\sup_{g\in G} \psh{g\epsilon}{t_0/\|t_0\|})\in (0,1]$, we have that:
\begin{equation*}
\delta_\star\underset{\sigma\to +\infini}{\sim} \sigma (\sqrt{1+\nu^2}-1),
\end{equation*}

\end{proposition}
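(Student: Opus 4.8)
The plan is to read \eqref{lowerbound} as a quadratic equation $\delta^2+B\delta-C=0$ in $\delta$, with
$$B=2\|t_0\|\left(1+\sqrt{1+\sigma^2/\|t_0\|^2}\right)=2\|t_0\|+2\sqrt{\|t_0\|^2+\sigma^2},\qquad C=\|t_0\|^2(a_\star-1)^2,$$
to determine the leading order of $B$ and $C$ as $\sigma\to+\infini$, and then to read off the unique positive root $\delta_\star=\tfrac12\left(-B+\sqrt{B^2+4C}\right)$ (the product of the two roots is $-C<0$, hence there is exactly one positive root). The coefficient $B$ is harmless: since $\sqrt{\|t_0\|^2+\sigma^2}=\sigma+O(1/\sigma)$ as $\sigma\to+\infini$, we get $B=2\sigma+O(1)$.

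The heart of the argument is the asymptotics of $C$, i.e. of $a_\star$. Writing $X=t_0+\sigma\epsilon$ and using linearity of the action, $\psh{g\cdot X}{t_0}=\psh{g\cdot t_0}{t_0}+\sigma\psh{g\cdot\epsilon}{t_0}$. The difficulty is that the supremum over $g\in G$ does \emph{not} split across this sum, since the two terms share the same $g$ and only the second scales with $\sigma$. To get around this, I would exploit the uniform bound $|\psh{g\cdot t_0}{t_0}|\leq\|g\cdot t_0\|\,\|t_0\|=\|t_0\|^2$ (Cauchy--Schwarz together with the isometry $\|g\cdot t_0\|=\|t_0\|$), which gives, pointwise,
$$\left|\,\underset{g\in G}{\sup}\,\psh{g\cdot X}{t_0}-\sigma\,\underset{g\in G}{\sup}\,\psh{g\cdot\epsilon}{t_0}\,\right|\leq\|t_0\|^2.$$
Taking expectations and using the homogeneity $\psh{g\cdot\epsilon}{t_0}=\|t_0\|\psh{g\cdot\epsilon}{t_0/\|t_0\|}$ together with the definition of $\nu$, so that $\E\!\left(\sup_{g}\psh{g\cdot\epsilon}{t_0}\right)=\|t_0\|\,\nu$, the defining formula \eqref{astar} yields
$$\|t_0\|^2(a_\star-1)=\E\!\left(\underset{g\in G}{\sup}\,\psh{g\cdot X}{t_0}\right)-\|t_0\|^2=\sigma\|t_0\|\nu+O(1),$$
and therefore $C=\|t_0\|^2(a_\star-1)^2=\sigma^2\nu^2+O(\sigma)$.

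Finally I would feed the two estimates into the closed-form root. With $B=2\sigma+O(1)$ and $C=\sigma^2\nu^2+O(\sigma)$ we obtain $B^2+4C=4\sigma^2(1+\nu^2)+O(\sigma)$, hence $\sqrt{B^2+4C}=2\sigma\sqrt{1+\nu^2}+O(1)$, so that
$$\delta_\star=\frac{-B+\sqrt{B^2+4C}}{2}=\sigma\left(\sqrt{1+\nu^2}-1\right)+O(1).$$
Because $\nu>0$, the leading term $\sigma(\sqrt{1+\nu^2}-1)$ is strictly positive and dominates the $O(1)$ remainder, which gives the claimed equivalent. The only genuinely delicate step is the pointwise comparison of the two suprema above, which isolates the $\sigma$-linear part of $a_\star$; everything else is routine bookkeeping on a quadratic and a Taylor expansion of the square root.
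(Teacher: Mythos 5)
Your argument follows the paper's proof in all essentials: the same reading of \eqref{lowerbound} as a quadratic in $\delta$ with coefficients $B$ and $-C$, the same isolation of the $\sigma$-linear part of $a_\star$ via the uniform bound $|\psh{g\cdot t_0}{t_0}|\leq\|t_0\|^2$ (the paper phrases it as the two one-sided inequalities~\eqref{majV} and~\eqref{minV}, you phrase it as a single pointwise absolute-value bound -- same content), and the same substitution of the resulting estimates $B\sim 2\sigma$, $C\sim\sigma^2\nu^2$ into the closed-form positive root. Your final $O(1)$ bookkeeping is in fact more explicit than the paper's terse ``we use the equivalent of each of these terms,'' and your observation that the product of the two roots is $-C<0$, hence a unique positive root, makes precise something the paper leaves implicit.

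There is, however, one genuine omission: at the last step you invoke ``Because $\nu>0$'' but never establish it, and $\nu\in(0,1]$ is part of the statement, not a free hypothesis. It is also essential to the conclusion: if $\nu=0$ your estimates only give $\delta_\star=O(1)$, and the claimed equivalent $\delta_\star\sim\sigma(\sqrt{1+\nu^2}-1)$ degenerates to an equivalence with $0$, which is false as a statement about asymptotic equivalence. The paper devotes its first paragraph to exactly this point, and the argument needs the hypotheses of \cref{lb} (hence condition~\eqref{condition} of \cref{theogeneral}): one has $\nu\geq\E\psh{\epsilon}{t_0/\|t_0\|}=0$ since $\E(\epsilon)=0$; if $\nu=0$, then the nonnegative random variable $\sup_{g\in G}\psh{g\epsilon}{t_0}-\psh{\epsilon}{t_0}$ has zero expectation, so it vanishes almost surely, and then $\psh{X}{t_0}\leq\sup_{g\in G}\psh{g\cdot X}{t_0}\leq\|t_0\|^2+\sigma\sup_{g\in G}\psh{g\epsilon}{t_0}=\|t_0\|^2+\sigma\psh{\epsilon}{t_0}=\psh{X}{t_0}$ almost surely, contradicting~\eqref{condition}; finally $\nu\leq\E\|\epsilon\|\leq\sqrt{\E(\|\epsilon\|^2)}=1$ by Cauchy--Schwarz and Jensen. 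With that paragraph added, your proof is complete and coincides with the paper's.
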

In particular, the consistency bias explodes when the variability of $X$ tends to infinity.
\begin{proof}
First, let us prove that that $\nu\in (0,1]$ under the condition~\eqref{condition}. We have $\nu\geq \E(\psh{\epsilon}{t_0/\|t_0\|}=0$. By a \textit{reductio ad absurdum}: if $\nu=0$, then $\underset{g\in G}{\sup} \psh{g\epsilon}{t_0}=\psh{\epsilon}{t_0}$ almost surely. We have then almost surely:
$\psh{X}{t_0} \leq  \sup_{g\in G} \psh{gX}{t_0}\leq  \|t_0\|^2 +\sup_{g\in G} \sigma \psh{g\epsilon}{t_0}= \|t_0\|^2 +\sigma \psh{\epsilon}{t_0}\leq \psh{X}{t_0},
$
which is in contradiction with~\eqref{condition}. Besides $\nu \leq \E(\|\epsilon\|)\leq \sqrt{ \E\|\epsilon\|^2}=1$
 
Second, we exhibit equivalent of the terms in equation~\eqref{lowerbound} when $\sigma\to+\infini$:
\begin{equation}
2\|t_0\|\left(1+\sqrt{1+\sigma^2/\|t_0\|^2}\right)\sim 2\sigma.
\label{b}
\end{equation}
Now by definition of $a_\star$ in Equation~\eqref{astar} and the decomposition of $X=t_0+\sigma \epsilon$ we get:
\begin{align}
\|t_0\|(a_\star-1)&=\frac{1}{\|t_0\|}\E\left( \underset{g\in G}{\sup} ( \psh{g\cdot t_0}{t_0}+\psh{g\cdot \sigma \epsilon}{t_0})\right)-\|t_0\|\nonumber\\
\|t_0\|(a_\star-1)&\leq  \frac{1}{\|t_0\|}\E\left( \underset{g\in G}{\sup} \psh{g\cdot \sigma \epsilon}{t_0}\right)=\sigma \nu \label{majV}\\
\|t_0\|(a_\star-1) &\geq  \frac{1}{\|t_0\|}\E\left( \underset{g\in G}{\sup} \psh{g\cdot \sigma \epsilon}{t_0}\right)-2\|t_0\|=\sigma\nu-2\|t_0\|, \label{minV}
\end{align}
The lower bound and the upper bound of $\|t_0\|(a_\star-1)$ found in~\eqref{majV} and~\eqref{minV} are both equivalent to $\sigma \nu$, when $\sigma \to +\infini$. Then the constant term of the quadratic Equation~\eqref{lowerbound} has an equivalent:
\begin{equation}
-\|t_0\|^2(a_\star-1)^2 \sim - \sigma^2\nu^2.
\label{c}
\end{equation}
Finallye if we solve the quadratic Equation~\eqref{lowerbound}, we write $\delta_\star$ as a function of the coefficients of the quadratic equation~\eqref{lowerbound}. We use the equivalent of each of these terms thanks to equation~\eqref{b} and~\eqref{c}, this proves \cref{sigmainfini}. 
\end{proof}

\begin{remark}
Thanks to inequality~\eqref{minV}, if $\frac{\|t_0\|}{\sigma}<\frac{\nu}{2}$, then $\|t_0\|^2(1-a_\star)^2\geq (\sigma \nu-2\|t_0\|)^2$, then we write $\delta_\star$ as a function of the coefficients of Equation~\eqref{lowerbound}, we obtain a lower bound of the inconsistency bias as a function of $\|t_0\|$, $\sigma$ and $\nu$ for $\sigma> 2\|t_0 \|/\nu$:
\begin{equation*}
\frac{\delta_\star}{\|t_0\|} \geq -(1+\sqrt{1+\sigma^2/\|t_0\|^2})+\sqrt{(1+\sqrt{1+\sigma^2/\|t_0\|^2})^2+(\sigma \nu/\|t_0\|-2)^2}.
\end{equation*}

\end{remark}

Although the constant $\nu$ intervenes in this lower bound, it is not an explicit term. We now explicit its behaviour depending on $t_0$. We remind that:
\begin{equation*}
\nu=\frac{1}{\|t_0\|}\E\left(\underset{g\in G}{\sup} \psh{g\epsilon}{t_0}\right).
\end{equation*}
To this end, we first note that the set of fixed points under the action of $G$ is a closed linear space, (because we can write it as an intersection of the kernel of the continuous and linear functions: $x\mapsto g\cdot x-x$ for all $g\in G$). We denote by $p$ the orthogonal projection on the set of fixed points $\mbox{Fix}(M)$. Then for $x\in M$, we have: $\mbox{dist}(x,\mbox{Fix}(M))=\|x-p(x)\|$.  Which yields:
\begin{equation}
\psh{g\epsilon}{ t_0}=\psh{g\epsilon}{ t_0-p(t_0)}+\psh{\epsilon}{p(t_0)}.
\label{projg}
\end{equation}
The right hand side of Equation~\eqref{projg} does not depend on $g$ as $p(t_0)\in \mbox{Fix}(M)$. Then:
\begin{equation*}
\|t_0\|\nu=\E\left(\underset{g\in G}{\sup} \psh{g \epsilon}{t_0-p(t_0)}\right) +\psh{\E(\epsilon)}{p(t_0)}.
\end{equation*}
Applying the Cauchy-Schwarz inequality and using $\E(\epsilon)=0$, we can conclude that:
\begin{equation}
\nu\leq  \frac{1}{\|t_0\|}\mbox{dist}(t_0,\mbox{Fix}(M)) \E(\|\epsilon\|)=\mbox{dist}(t_0/\|t_0\|,\mbox{Fix}(M)) \E(\|\epsilon\|).
\label{nuup}
\end{equation}
This leads to the following comment: our lower bound of the consistency bias is smaller when our normalized template $t_0/\|t_0\|$ is closer to the set of fixed points.

\subsection{Upper bound of the consistency bias}
\label{subsec:up}
In this Section, we find a upper bound of the consistency bias. More precisely we have the following Theorem:
 \begin{proposition}
 \label{ub}
 Let $X$ be a random variable in $M$, such that $X=t_0+\sigma \epsilon$ where $\sigma>0$, $\E(\epsilon)=0$ and $\E(||\epsilon||^2)=1$. We suppose that $[m_\star]$ is a Fréchet mean of $[X]$. Then we have the following upper bound of the quotient distance between the orbit of the template $t_0$ and the Fréchet mean of $[X]$:
\begin{equation}
d_ Q([m_\star],[t_0])\leq \sigma\nu(m_*-m_0)+
\sqrt{\sigma^2\nu(m_*-m_0)^2+2\mbox{dist}(t_0,\mbox{Fix}(M))\sigma\nu(m_*-m_0)},
\label{equp}
\end{equation}

where we have noted $\nu(m)=\E(\sup_g\psh{g\epsilon}{m/\|m\|})\in [0,1]$ if $m\neq 0$ and $\nu(0)=0$, and $m_0$ the orthogonal projection of $t_0$ on $Fix(M)$.
\end{proposition}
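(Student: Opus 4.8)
The plan is to turn the optimality inequality $F(m_\star)\le F(t_0)$ into a quadratic inequality for the unknown bias $\rho:=d_Q([m_\star],[t_0])$, and then to read off the stated bound as the larger root of that quadratic.

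First I would rewrite the variance in the form adapted to an isometric action. Expanding the square and using $\|g\cdot X\|=\|X\|$ gives, for every $m\in M$,
\[ F(m)=\E(\|X\|^2)+\|m\|^2-2\,\E\Big(\sup_{g\in G}\psh{g\cdot X}{m}\Big). \]
From this the easy half is immediate: $F(t_0)=\E(\inf_{g}\|g\cdot X-t_0\|^2)\le \E(\|X-t_0\|^2)=\sigma^2$, taking $g=e_G$ inside the infimum.

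The core of the argument is a matching lower bound for $F(m_\star)$ in terms of $\rho$. I would first replace $m_\star$ by a representative $g\cdot m_\star$ of its orbit realizing the quotient distance up to an arbitrarily small error, so that $\|m_\star-t_0\|$ may be taken within $\eta$ of $\rho$; since $m_0=p(t_0)\in\mbox{Fix}(M)$ is fixed by the action, both $\|m_\star-m_0\|$ and $\nu(m_\star-m_0)$ are unchanged by such a relabelling and so depend only on the orbit. Writing $X=t_0+\sigma\epsilon$ and using subadditivity of the supremum,
\[ \sup_{g\in G}\psh{g\cdot X}{m_\star}\le \sup_{g\in G}\psh{g\cdot t_0}{m_\star}+\sigma\sup_{g\in G}\psh{g\cdot\epsilon}{m_\star}. \]
The deterministic term is exact by polarization and the definition of the quotient distance,
\[ \sup_{g\in G}\psh{g\cdot t_0}{m_\star}=\tfrac12\big(\|t_0\|^2+\|m_\star\|^2-\rho^2\big), \]
and for the noise term I would use that $\psh{g\cdot\epsilon}{m_0}=\psh{\epsilon}{m_0}$ is independent of $g$ (as $m_0\in\mbox{Fix}(M)$): splitting $m_\star=(m_\star-m_0)+m_0$ and taking expectations with $\E(\epsilon)=0$ yields $\E(\sup_{g}\psh{g\cdot\epsilon}{m_\star})=\|m_\star-m_0\|\,\nu(m_\star-m_0)$. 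Substituting into the expansion of $F$, the $\|m_\star\|^2$ terms cancel and $\E(\|X\|^2)=\|t_0\|^2+\sigma^2$ produces
\[ F(m_\star)\ge \sigma^2+\rho^2-2\sigma\|m_\star-m_0\|\,\nu(m_\star-m_0). \]

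Finally I would combine: since $F(m_\star)\le F(t_0)\le\sigma^2$, the $\sigma^2$ cancels and I am left with $\rho^2\le 2\sigma\|m_\star-m_0\|\,\nu(m_\star-m_0)$. The triangle inequality $\|m_\star-m_0\|\le\|m_\star-t_0\|+\|t_0-m_0\|\le\rho+\mbox{dist}(t_0,\mbox{Fix}(M))$ (using $\|t_0-m_0\|=\mbox{dist}(t_0,\mbox{Fix}(M))$ and the distance-realizing representative, then letting $\eta\to0$) gives the quadratic inequality
\[ \rho^2-2\sigma\nu(m_\star-m_0)\,\rho-2\,\mbox{dist}(t_0,\mbox{Fix}(M))\,\sigma\nu(m_\star-m_0)\le0, \]
which forces $\rho$ to be at most its larger root, precisely the right-hand side of~\eqref{equp}. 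The main obstacle I anticipate is the bookkeeping around the fixed-point space: collapsing the noise term to $\nu(m_\star-m_0)$ relies on the splitting through $m_0\in\mbox{Fix}(M)$ together with $\E(\epsilon)=0$, and one must verify that passing to a distance-realizing representative alters neither $\nu(m_\star-m_0)$ nor $\|m_\star-m_0\|$. Everything else is the exact expansion of $F$ and elementary manipulation of the resulting quadratic.
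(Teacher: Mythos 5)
Your proof is correct and follows essentially the same strategy as the paper's: both turn the optimality chain $F(m_\star)\leq F(t_0)\leq \sigma^2$ into a quadratic inequality in the bias $\rho=d_Q([m_\star],[t_0])$ by expanding $F$, splitting the signal term (exact, via the quotient distance) from the noise term, applying a triangle inequality, and solving the quadratic. The only real difference is organizational: the paper first proves the intermediate bound $d_Q([m_\star],[t_0])\leq \sigma\nu(m_\star)+\sqrt{\sigma^2\nu(m_\star)^2+2\|t_0\|\sigma\nu(m_\star)}$ and then upgrades it by applying it to the shifted variable $X-m_0$ (using $F_X(m)=F_{X-m_0}(m-m_0)$ since $m_0\in \mbox{Fix}(M)$), whereas you fold the projection $m_0$ directly into the noise-term computation from the start and use a near-distance-realizing representative for the triangle inequality in the top space instead of the paper's quotient triangle inequality $\|m_\star\|\leq\|t_0\|+d_Q([m_\star],[t_0])$.
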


Note that we made no hypothesis on the template in this proposition. We deduce from Equation~\eqref{equp} that $d_Q([m_\star],[t_0])\leq \sigma+\sqrt{\sigma^2+2\sigma\mbox{dist}(t_0,\mbox{Fix}(M))}$ is a $O(\sigma)$ when $\sigma\to \infini$, but a $O(\sqrt{\sigma})$ when $\sigma\to 0$, in particular the consistency bias can be neglected when $\sigma$ is small.
\begin{proof}
First we have: 
\begin{equation}
F(m_\star)\leq F(t_0)=\E(\inf_g ||t_0-g(t_0+\sigma \epsilon)||^2)\leq \E(||\sigma \epsilon||^2)=\sigma^2.
\label{ineg1}
\end{equation}

Secondly we have for all $m\in M$, (in particular for $m_\star$):
\begin{align}
F(m)&=&\E(\inf_{g}(\|m-gt_0\|^2+\sigma^2\|\epsilon\|^2-2\langle g\sigma\epsilon,m-gt_0\rangle)) \nonumber\\
&\geq& d_Q([m],[t_0])^2+\sigma^2-2\E(\sup_g\langle \sigma \epsilon,gm\rangle)
\label{ineg2}.
\end{align}

With Inequalities~\eqref{ineg1} and~\eqref{ineg2} one gets:
\begin{equation*}
d_Q([m_*],[t_0])^2\leq 2\E(\sup_g\psh{\sigma\epsilon}{gm_\star})= 2\sigma\nu(m_\star)||m_\star||, \label{d2v}
\end{equation*}
note that at this point, if $m_\star=0$ then $\E(\sup_g\psh{\sigma\epsilon}{gm_\star})=0$ and $\nu(m_\star)=0$ although Equation~\eqref{d2v} is still true even if $m_\star=0$.
Moreover with the triangular inequality applied at $[m_\star],\: [0]$ and $[t_0]$, one gets: $ \|m_\star\|\leq \|t_0\|+d_Q([m_\star],[t_0])$ and then:
\begin{equation}
d_Q([m_*],[t_0])^2\leq 2\sigma\nu(m_\star)(d_Q([m_*],[t_0])+\|t_0\|).
\label{trinome}
\end{equation}
We can solve inequality~\eqref{trinome} and we get:

\begin{equation}
    d_ Q([m_\star],[t_0])\leq \sigma\nu(m_\star)+
    \sqrt{\sigma^2\nu(m_\star)^2+2\|t_0\|\sigma\nu(m_\star)},
    \label{intem}
\end{equation}
We note by $F_X$ instead of $F$ the variance in the quotient space of $[X]$, and we want to apply inequality~\eqref{intem} to $X-m_0$. As $m_0$ is a fixed point:
\begin{equation*}
F_X(m)= \E\left( \underset{g\in G}{\inf} \|X-m_0-g\cdot (m -m_0) \|^2\right)=F_{X-m_0}(m-m_0)
\end{equation*}
Then $m_\star$ minimises $F_X$ if and only if $m_\star-m_0$ minimises $F_{X-m_0}$. We apply Equation~\eqref{intem} to $X-m_0$, with $\E(X-m_0)=t_0-m_0$ and $[m_\star-m_0]$ a Fréchet mean of $[X-m_0]$. We get:
\begin{equation*}
d_Q([m_\star-m_0], [t_0-m_0]) \leq \sigma\nu(m_*-m_0)+\sqrt{\sigma^2\nu(m_*-m_0)^2+2\|t_0-m_0\|\sigma\nu(m_*-m_0)}.
\end{equation*}
Moreover $d_ Q([m_\star],[t_0])=d_Q([m_\star-m_0],[t_0-m_0])$, which concludes the proof. \qquad
\end{proof}

\subsection{Empirical Fréchet mean}
In practice, we never compute the Fréchet mean in quotient space, only the empirical Fréchet mean in quotient space when the size of a sample is supposed to be large enough. If the empirical Fréchet in the quotient space means converges to the Fréchet mean in the quotient space then we can not use these empirical Fréchet mean in order to estimate the template. In~\cite{bha}, it has been proved that the empirical Fréchet mean converges to the Fréchet mean with a $\frac{1}{\sqrt n}$ convergence speed, however the law of the random variable is supposed to be included in a ball whose radius depends on the geometry on the manifold. Here we are not in a manifold, indeed the quotient space contains singularities, moreover we do not suppose that the law is necessarily bounded. However in~\cite{zie} the empirical Fréchet means is proved to converge to the Fréchet means but no convergence rate is provided.


We propose now to prove that the quotient distance between the template and the empirical Fréchet mean in quotient space have an lower bound which is the asymptotic of the one lower bound of the consistency bias found in~\eqref{lowerboundb}. Take $X, X_1,\tp,X_n$ independent and identically distributed (with $t_0=\E(X)$ not a fixed point). We define the empirical variance of $[X]$ by:
\begin{equation*}
m\in M\mapsto F_n(m)=\frac{1}{n} \somm{i=1}{n} d_Q([m],[X_i])^2=\frac{1}{n} \somm{i=1}{n} \underset{g\in G}{\inf} \|m-g\cdot X_i\|^2,
\end{equation*}
and we say that $[m_{n\star}]$ is a empirical Fréchet mean of $[X]$ if $m_{n\star}$ is a global minimiser of $F_n$.

\begin{proposition}
Let $X,X_1,\tp,X_n$ independent and identically distributed random variables, with $t_0=\E(X)$. Let be $[m_{n\star}]$ be an empirical Fréchet mean of $[X]$. Then $\delta_n$ is a lower bound of the quotient distance between the orbit of the template and $[m_{n\star}]$, where $\delta_{n}$ is the unique positive solution of:
\begin{equation*}
\delta^2+2\left(||t_0||+ \frac{1}{n}\somm{i=1}{n} \|X_i\|\right)\delta -\|t_0\|^2(a_{n\star}-1)^2=0.
\end{equation*}
$a_{n\star}$ is defined like $a_\star$ in \cref{subsec:gbias} by:
\begin{equation*}
a_{n\star} = \frac{\frac1n\somm{i=1}n  \underset{g\in G}{\sup} \psh{g\cdot X_i}{t_0} }{\|t_0\|^2}.
\end{equation*}

We have that $\delta_n\to \delta$ by the law of large numbers.
\end{proposition}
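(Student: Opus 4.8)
The plan is to transcribe the proof of \cref{lb} almost verbatim, systematically replacing the expectation $\E(\,\cdot\,)$ by the empirical average $\frac1n\sum_{i=1}^n$. First I would define the empirical variance $F_n(m)=\frac1n\sum_{i=1}^n\inf_{g\in G}\|m-g\cdot X_i\|^2$ and restrict it to the ray $\R^+t_0$ by setting $f_n(a)=F_n(at_0)$. Using the isometry of the action, for $a\ge 0$ each summand expands as $\inf_{g\in G}\|g\cdot X_i-at_0\|^2=\|X_i\|^2-2a\sup_{g\in G}\psh{g\cdot X_i}{t_0}+a^2\|t_0\|^2$, so that $f_n$ is the empirical counterpart of the quadratic~\eqref{ef}. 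Its minimiser over $\R^+$ is exactly the stated $a_{n\star}$, the empirical analogue of $a_\star$ in~\eqref{astar}.

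From this expansion I would read off the empirical version of~\eqref{delta}, namely $F_n(t_0)-F_n(a_{n\star}t_0)=\|t_0\|^2(a_{n\star}-1)^2\ge 0$, which plays the role of the positive gap. Next I would establish the empirical Lipschitz estimate mirroring~\eqref{variationFb}: for every $x\in M$,
\begin{equation*}
\left|F_n(t_0)-F_n(x)\right|\le \|x-t_0\|\left(2\|t_0\|+\|x-t_0\|+\frac2n\sum_{i=1}^n\|X_i\|\right),
\end{equation*}
which follows by applying, sample by sample, the same pointwise inequality that underlies~\eqref{variationFb} and then averaging over $i$.

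The two estimates combine exactly as in \cref{lb}. The function $\delta\mapsto \delta^2+2\delta\bigl(\|t_0\|+\frac1n\sum_i\|X_i\|\bigr)$ is increasing on $\R^+$ and equals the gap $\|t_0\|^2(a_{n\star}-1)^2$ precisely at $\delta_n$; hence for every $x$ with $\|x-t_0\|<\delta_n$ the Lipschitz bound is strictly below the gap, which gives $F_n(x)>F_n(a_{n\star}t_0)\ge F_n(m_{n\star})$. Such $x$ therefore cannot minimise $F_n$. The one point that is genuinely new compared with \cref{lb}, and the step I would treat most carefully, is the passage from this top-space statement to a quotient-distance bound: since the action is isometric, $F_n$ is $G$-invariant, so every point $g\cdot m_{n\star}$ of the orbit of a minimiser is again a global minimiser of $F_n$; each must then satisfy $\|g\cdot m_{n\star}-t_0\|\ge\delta_n$, and taking the infimum over $g$ yields $d_Q([m_{n\star}],[t_0])\ge\delta_n$.

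Finally, the convergence $\delta_n\to\delta$ is a continuity-plus-law-of-large-numbers argument: $\frac1n\sum_i\|X_i\|\to\E\|X\|$ and $a_{n\star}\to a_\star$ almost surely (the summand $\sup_{g\in G}\psh{g\cdot X_i}{t_0}$ is integrable, being bounded in absolute value by $\|t_0\|\,\|X_i\|$ with $\E\|X\|<\infty$), and $\delta_n$ is the unique positive root of a quadratic whose coefficients are these averages, a root depending continuously on them; hence $\delta_n$ converges to the positive root $\delta$ of~\eqref{lowerboundb}. The only case requiring a word is $a_{n\star}\le 1$, where the gap degenerates and $\delta_n=0$ is a trivial but valid lower bound; under the inconsistency condition~\eqref{condition} this does not occur for large $n$, since then $a_{n\star}\to a_\star>1$.
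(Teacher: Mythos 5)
Your proposal is correct and is essentially the paper's own argument: the paper's proof consists of the single sentence ``direct application of \cref{lb}, but applied to the empirical law of $X$'', and your write-up is precisely that application, replacing expectations by empirical averages while keeping $t_0$ as the reference point. The extra details you supply (the $G$-invariance step turning the top-space ball exclusion into a quotient-distance bound, the integrability check for the law of large numbers, and the degenerate case $a_{n\star}\le 1$) are points the paper leaves implicit, not a different route.
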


The proof is a direct application of \cref{lb}, but applied to the empirical law of $X$ given by the realization of $X_1,\tp, X_n$.
\subsection{Examples}
In this Subsection, we discuss, in some examples, the application of \cref{theogeneral} and see the behaviour of the constant $\nu$. This constant intervened in lower bound of the consistency bias.
\subsubsection{Action of translation on \texorpdfstring{$L^2(\R/\Z)$}{L2(R/Z}}
We take an orbit $O=[f_0]$, where $f_0$ $\in \Cc^2(\R/\Z)$, non constant. We show easily
that $O$ is a manifold of dimension $1$ and the tangent space at $f_0$
 is\footnote{Indeed $\varphi: \begin{matrix} 
]-\frac12,\frac12[&\to & O\\
t&\mapsto &f_0(. - t)
\end{matrix}$ is a local parametrisation of $O$: $f_0=\varphi(0)$, and we check that:
$\underset{x\to 0}{\lim} \|\varphi(x)-\varphi(0)-xf_0'\|_{L^2}=0$ with Taylor-Lagrange inequality at the order 2. As a conclusion $\varphi$ is differentiable at $0$, and it is an immersion (since $f_0'\neq 0$), and $D_0\varphi:x\mapsto xf_0'$, then $O$ is a manifold of dimension $1$ and the tangent space of $O$ at $f_0$ is: $
T_{f_0}O=D_0\varphi(\R)=\R f_0'$.}
$\R f'_0$. Therefore a sufficient condition on $X$ such that $\E(X)=f_0$ to have an inconsistency is: $\P(X\notin f_0'^\perp)>0$ according to \cref{biasmanifold}. Now if we denote by $\fc$ the constant function on $\R/\Z$ equal to $1$. We have in this setting: that the set of fixed points under the action of $G$ is the set of constant functions: $\mbox{Fix}(M)=\R \fc$ and:
\begin{equation*}
\mbox{dist}(f_0,\mbox{Fix}(M))= \|f_0-\psh{f_0}{\fc}\fc\|=\sqrt{\int_0^1 \left(f_0(t)-\int_0^1 f_0(s)ds\right)^2dt}.
\end{equation*}
This distance to the fixed points is used in the upper bound of the constant $\nu$ in Equation~\eqref{nuup}.
Note that if $f_0$ is not differentiable, then $[f_0]$ is not necessarily a manifold, and \eqref{biasmanifold} does not apply. However \cref{denseorbit} does: if $f_0$ is not a constant function, then $[f_0]\setminus \{f_0\}$ is dense in $[f_0]$. Therefore as soon as the support of $X$ contains a ball around $f_0$, there is an inconsistency.


\subsubsection{Action of discrete translation on \texorpdfstring{$\R^{\Z/\N\Z}$}{R^(Z/NZ)}}

We come back on \cref{ex2}, with $D=1$ (discretised signals). For some signal $t_0$, $\nu$ previously defined is:
\begin{equation*}
\nu=\frac{1}{\|t_0\|} \E\left(\underset{\tau\in \Z/\N\Z}{\max} \psh{\epsilon}{\tau \cdot t_0}\right).
\end{equation*}
Therefore if we have a sample of size $I$ of $\epsilon$ \textit{iid}, then:
\begin{equation*}
\nu=\frac{1}{\|t_0\|}\underset{I\to +\infini}{\lim} \frac{1}{I} \somm{i=1}{I} \underset{\tau_i \in \Z/N\Z}{\max} \psh{\epsilon_i}{\tau_i \cdot t_0},
\end{equation*}
By an exhaustive research, we can find the $\tau_i$'s which maximise the dot product, then with this sample and $t_0$ we can approximate $\nu$. We have done this approximation for several signals $t_0$ on \cref{diffnu}. According the previous results, the bigger $\nu$ is, the more important the lower bound of the consistency bias is. We remark that the $\nu$ estimated is small, $\nu \ll 1$ for different signals.
\begin{figure}[!ht]\centering\includegraphics[clip=true,trim=2.6cm 7.5cm 2.5cm 6cm,scale=0.35]{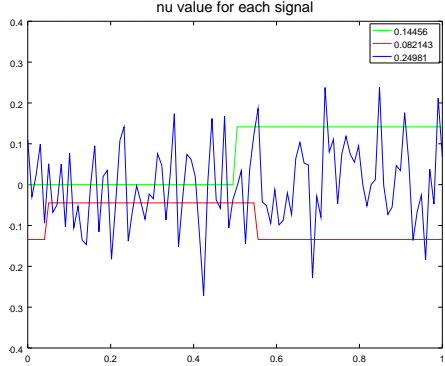}\caption[Different signals and their $\nu$ approximated]{Different signals and their $\nu$ approximated with a sample of size $10^3$ in $\R^{\Z/100\Z}$. $\epsilon$ is here a Gaussian noise in $\R^{\Z/100\Z}$, such that $\E(\epsilon)=0$ and $\E(\|\epsilon\|^2)=1$. For instance the blue signal is a signal defined randomly, and when we approximate the $\nu$ which corresponds to that $t_0$ we find $\simeq 0.25$.}\label{diffnu}\end{figure}

\subsubsection{Action of rotations on $\R^n$}

Now we consider the action of rotations on $\R^n$ with a Gaussian noise. Take $X\sim \Nc(t_0,s^2 Id_n)$ then the variability of $X$ is $n s^2$, then $X$ has a decomposition: $X=t_0+\sqrt{n}s \epsilon$ with $\E(\epsilon)=0$ and $\E(\|\epsilon\|^2)=1$. According to \cref{sigmainfini} we have by noting $\delta_\star$ the lower bound of the consistency bias when $s\to \infini$:
\begin{equation*}
    \frac{\delta_\star}{s}\to \sqrt{n}(-1+\sqrt{1+\nu^2}).
\end{equation*}
Now $\nu=\E(\sup_{g\in G} \psh{g\epsilon}{t_0)}/\|t_0\|=\E(\|\epsilon\|) \to 1$ when $n$ tends to infinity (expected value of the Chi distribution) we have that for $n$ large enough:

\begin{equation*}
     \lim_{s\to\infini}\frac{\delta_\star}{s} \simeq \sqrt{n}(\sqrt 2-1).
\end{equation*}
We compare this result with the exact computation of the consistency bias (noted here $CB$) made by Miolane et al.~\cite{mio2}, which writes with our current notations:
\begin{equation*}
    \lim_{s\to \infini} \frac{\mbox{CB}}{s} =\sqrt2 \frac{\Gamma((n+1)/2)}{\Gamma(n/2)}.
\end{equation*}
Using a standard Taylor expansion on the Gamma function, we have that for $n$ large enough:
\begin{equation*}
    \lim_{s\to \infini} \frac{\mbox{CB}}{s} \simeq \sqrt{n}.
\end{equation*}
As a conclusion, when the dimension of the space is large enough our lower bound and the exact computation of the bias have the same asymptotic behaviour. It differs only by the constant $\sqrt 2-1\simeq 0.4$ in our lower bound, $1$ in the work of Miolane et al.~\cite{mio}.

\section{Fréchet means top and quotient spaces are not consistent when the template is a fixed point}
\label{sec:fixedpoint}
In this Section, we do not assume that the top space $M$ is a vector space, but rather a manifold. We need then to rewrite the generative model likewise: let $t_0\in M$, and $X$ any random variable of $M$ such as $t_0$ is a Fréchet mean of $X$. Then $Y=S\cdot X$ is the observed variable where $S$ is a random variable whose value are in $G$. In this Section we make the assumption that the template $t_0$ is a fixed point under the action of $G$.

\subsection{Result}
Let $X$ be a random variable on $M$ and define the variance of $X$ as:
\begin{equation*}
E (m) = \E(d_M(m, X)^2 ).
\end{equation*}
We say that $t_0$ is a Fréchet mean of $X$ if $t_0$ is a global minimiser of the variance $E$. We prove the following result:
\begin{theorem}
\label{theo1}
Assume that $M$ is a complete finite dimensional Riemannian manifold and that
$d_M$ is the geodesic distance on $M$. Let $X$ be a random variable on $M$, with $\E(d(x , X)^2 ) <+\infini$ for some $x\in M$. We assume that $t_0$ is a fixed point and a Fréchet mean of $X$ and that $\P(X \in C(t_0)) = 0$ where $C(t_0)$ is the cut locus of $t_0$. Suppose that there exists a point in the support of $X$ which is not a fixed point nor in the cut locus of $t_0$. Then $[t_0]$ is not a Fréchet mean of $[X]$.
\end{theorem}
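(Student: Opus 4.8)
The plan is to show that $t_0$ is not even a local minimiser of the quotient variance $F(m)=\E(d_Q([m],[X])^2)$ by exhibiting a geodesic through $t_0$ along which $F$ strictly decreases. Since $t_0$ is a fixed point, its orbit is the singleton $\{t_0\}$, so $d_Q([t_0],[x])=\inf_{g}d_M(g\cdot t_0,x)=d_M(t_0,x)$ and hence $F(t_0)=\E(d_M(t_0,X)^2)=E(t_0)$, the top-space variance. Two consequences of the hypotheses will drive the argument. First, because $t_0$ is a Fréchet mean of $X$ in the top space and $\P(X\in C(t_0))=0$, the map $m\mapsto d_M(m,X)^2$ is a.s. smooth near $t_0$ and the first-order optimality condition reads $\E(\log_{t_0}(X))=0$, where $\log_{t_0}=\exp_{t_0}^{-1}$ denotes the Riemannian logarithm. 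Second, since the action is isometric and fixes $t_0$, each $g$ is an isometry of $M$ fixing $t_0$; it therefore preserves distances to $t_0$ and maps the cut locus $C(t_0)$ to itself, so that for any $x\notin C(t_0)$ all the points $g\cdot x$ of the orbit lie at the same distance $d_M(t_0,x)$ from $t_0$, and $\log_{t_0}(g\cdot x)=(dg)_{t_0}\log_{t_0}(x)$.

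Next I would build a descent direction from the folding hypothesis. Pick a point $y$ in the support of $X$ that is neither fixed nor in $C(t_0)$, and $g_1\in G$ with $g_1\cdot y\neq y$; then $g_1\cdot y\notin C(t_0)$ as well, and the vectors $w_0=\log_{t_0}(y)$ and $w_1=\log_{t_0}(g_1\cdot y)$ are distinct. Setting $v=w_1-w_0\neq 0$ gives $\langle v,w_1\rangle-\langle v,w_0\rangle=\|v\|^2>0$, so $g_1$ strictly beats the identity in the direction $v$ at $y$. By continuity of $x\mapsto\langle v,\log_{t_0}(g_1\cdot x)-\log_{t_0}(x)\rangle$ off the cut locus, this strict gap persists on a ball $A$ around $y$, and $\P(X\in A)>0$ because $y$ lies in the support.

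The core estimate then compares $F$ along $c(t)=\exp_{t_0}(tv)$ against a single measurable competitor: take $g(x)=g_1$ for $x\in A$ and $g(x)=e_G$ otherwise, so that $F(c(t))\leq \E(d_M(c(t),g(X)\cdot X)^2)$. Since $d_M(t_0,g(x)\cdot x)=d_M(t_0,x)$, the first variation of the squared distance gives, pointwise, $\tfrac1t\big(d_M(c(t),g(X)\cdot X)^2-d_M(t_0,X)^2\big)\to -2\langle v,\log_{t_0}(g(X)\cdot X)\rangle$; the difference quotients are dominated by the integrable bound $\|v\|(2d_M(t_0,X)+\|v\|)$ (using that distances are $1$-Lipschitz and $c$ has speed $\|v\|$), so dominated convergence applies. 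Using $\E(\log_{t_0}(X))=0$ to cancel the identity part, the limit equals $-2\,\E\big[(\langle v,\log_{t_0}(g_1\cdot X)\rangle-\langle v,\log_{t_0}(X)\rangle)\mathbf{1}_{\{X\in A\}}\big]$, which is strictly negative by the choice of $A$. Hence $F(c(t))<F(t_0)$ for all small $t>0$, so $t_0$ is not a minimiser of $F$ and $[t_0]$ is not a Fréchet mean of $[X]$.

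The main obstacle I anticipate is not the geometric idea — which is exactly the folding mechanism already exploited in the non-fixed case of \cref{theogeneral} — but the analytic bookkeeping on the manifold: justifying the first-order condition $\E(\log_{t_0}(X))=0$ and the differentiation under the expectation, both of which hinge on smoothness of the squared distance away from $C(t_0)$ together with a uniform integrable domination on the compact time interval $[0,1]$. Working with a single competitor $g(x)\in\{e_G,g_1\}$ rather than the full infimum over $G$ is what keeps these estimates clean and makes the argument valid for an arbitrary (possibly infinite) isometry group.
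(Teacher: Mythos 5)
Your proof is correct and follows essentially the same route as the paper's: the same two-valued measurable competitor (equal to $g_1$ on a small ball around the non-fixed support point and to $e_G$ elsewhere), the same differentiability of the squared distance off the cut locus (\cref{lemmaf}), and the same first-order condition $\E(\log_{t_0}(X))=0$ coming from $t_0$ being a Fréchet mean of $X$ in the top space. The only difference is organisational: the paper assumes $[t_0]$ is a Fréchet mean of $[X]$ and derives a contradiction between the two vanishing gradients $\nabla E(t_0)$ and $\nabla E_s(t_0)$, whereas you directly exhibit the descent direction $v=\log_{t_0}(g_1\cdot y)-\log_{t_0}(y)$ and show that the majorant $m\mapsto\E\left(d_M(m,g(X)\cdot X)^2\right)$, which dominates $F$ and coincides with it at $t_0$, strictly decreases along the geodesic $t\mapsto\exp_{t_0}(tv)$.
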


The previous result is finite dimensional and does not cover interesting infinite dimensional setting concerning curves for instance. However, a simple extension to the previous result can be stated when $M$ is a Hilbert vector space since then the space is flat and some technical problems like the presence of cut locus point do not occur.

\begin{theorem}
\label{theo2}
Assume that $M$ is a Hilbert space and that $d_M$ is given by the Hilbert norm on $M$. Let $X$ be a random variable on $M$, with $\E(\|X\|^2 ) <+\infini$. We assume that $t_0=\E(X)$. Suppose that there exists a point in the support of the law of $X$ that is not a fixed point for the action of $G$. Then $[t_0]$ is not a Fréchet mean of $[X]$.
\end{theorem}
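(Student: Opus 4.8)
The plan is to exploit the standing assumption of this section, that $t_0$ is a fixed point, to obtain a transparent expression for the quotient variance at $t_0$, and then to perturb $t_0$ in a \emph{transverse} direction along which the orbits of the observations fold. This is the analogue of \cref{theogeneral}, except that moving along the ray $\R^+t_0$ is now useless: since $t_0$ is fixed one has $\underset{g\in G}{\sup}\psh{g\cdot X}{t_0}=\psh{X}{t_0}$ almost surely, so condition~\eqref{condition} can never hold and a genuinely transverse perturbation is required.

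First I would record that, because $t_0$ is a fixed point and the action is isometric, $\|t_0-g\cdot X\|=\|g^{-1}\cdot t_0-X\|=\|t_0-X\|$ for every $g\in G$; hence the infimum defining the quotient distance is insensitive to $g$ and
\[
F(t_0)=\E\!\left(\underset{g\in G}{\inf}\|t_0-g\cdot X\|^2\right)=\E(\|t_0-X\|^2).
\]
Next, for a fixed direction $w\in M$ and $\lambda>0$ I would expand $\|t_0+\lambda w-g\cdot X\|^2$, again using $\|t_0-g\cdot X\|^2=\|t_0-X\|^2$, minimise over $g$ and take the expectation to obtain
\[
F(t_0+\lambda w)=F(t_0)+\lambda^2\|w\|^2-2\lambda\Big(\E\big(\underset{g\in G}{\sup}\psh{w}{g\cdot X}\big)-\psh{w}{t_0}\Big).
\]
Integrability of the supremum term comes for free from the isometric action, since $|\underset{g}{\sup}\psh{w}{g\cdot X}|\le\|w\|\,\|X\|$ and $\E\|X\|\le\sqrt{\E\|X\|^2}<+\infini$. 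The problem is thus reduced to producing a single direction $w$ for which $c:=\E(\underset{g}{\sup}\psh{w}{g\cdot X})-\psh{w}{t_0}>0$, for then $F(t_0+\lambda w)-F(t_0)=\lambda^2\|w\|^2-2\lambda c<0$ as soon as $0<\lambda<2c/\|w\|^2$, so $t_0$ does not minimise $F$ and $[t_0]$ is not a Fréchet mean.

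To build such a $w$ I would use the support hypothesis. Pick $y$ in the support of $X$ and $g_0\in G$ with $g_0\cdot y\neq y$, and set $w=g_0\cdot y-y\neq 0$. Since the action is linear, the map $z\mapsto\psh{w}{g_0\cdot z-z}$ is continuous and equals $\|w\|^2>0$ at $z=y$, so it stays positive on a ball $B(y,r)$; as $y$ lies in the support, $\P(X\in B(y,r))>0$. On this event $\underset{g}{\sup}\psh{w}{g\cdot X}\ge\psh{w}{g_0\cdot X}>\psh{w}{X}$, whereas $\underset{g}{\sup}\psh{w}{g\cdot X}\ge\psh{w}{X}$ holds everywhere; integrating and using $\E\psh{w}{X}=\psh{w}{\E X}=\psh{w}{t_0}$ gives $c>0$, which finishes the proof. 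The one delicate step is precisely this passage from a single non-fixed support point to a set of positive probability on which the folding inequality is strict; as in \cref{propcurv} it is the continuity (linearity) of the action together with the definition of the support that does the work, and I expect it to be the main obstacle to make fully rigorous.
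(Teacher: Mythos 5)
Your proof is correct, but it takes a genuinely different route from the paper's. The paper obtains \cref{theo2} as a corollary of the machinery built for \cref{theo1}: it argues by contradiction, introduces simple functions $s:M\to G$ and the top-space variance $E_s(m)=\E(d_M(m,s(X)X)^2)$ of the registered variable $s(X)X$, uses the fixed-point property to get $E_s(t_0)=E(t_0)$ while $E_s$ dominates the quotient variance everywhere, so that $t_0$ is a global minimiser of every $E_s$; the first-order conditions $\E(\log_{t_0}(X))=\E(\log_{t_0}(s(X)X))=0$ (from \cref{lemmaf}, with $\log_{t_0}(x)=x-t_0$ in the Hilbert case) are then contradicted by localising around a non-fixed support point. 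You instead give a direct variational argument: the exact quadratic expansion $F(t_0+\lambda w)=F(t_0)+\lambda^2\|w\|^2-2\lambda c$ with $c=\E(\sup_{g}\psh{w}{g\cdot X})-\psh{w}{t_0}$ — which is precisely the computation the paper performs in the proof of \cref{cbfp} — followed by the observation, which the paper never spells out, that $c>0$ for $w=g_0\cdot y-y$ built from a non-fixed support point $y$, by continuity of $z\mapsto\psh{w}{g_0\cdot z-z}$ and the definition of the support (the same mechanism as in \cref{denseorbit,propcurv}). Your route is more elementary and self-contained: no differentiation of the variance, no simple-function machinery, no contradiction; it produces an explicit competitor $t_0+\lambda w$ with strictly smaller quotient variance, and in passing shows that the bias formula of \cref{cbfp} is strictly positive under the support hypothesis. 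What it gives up is uniformity: the paper's argument is designed so that the Riemannian case (\cref{theo1}, with cut locus and log map) and the Hilbert case are treated by one and the same scheme, whereas your expansion of the squared norm is intrinsically linear and does not transfer to manifolds. The step you flag as the delicate one (passing from a single non-fixed support point to an event of positive probability where the folding is strict) is in fact already rigorous as you wrote it: continuity gives a ball on which $\psh{w}{g_0\cdot z-z}>0$, the support gives it positive mass, and a non-negative integrand which is positive on that event has positive expectation.

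Two justifications should be restated, since in \cref{sec:fixedpoint} the paper explicitly drops the assumption that the action is linear and keeps only the invariance of the norm distance. Neither harms your proof: continuity of $z\mapsto g_0\cdot z$ follows from its being an isometry, and integrability of $\sup_g\psh{w}{g\cdot X}$ follows from $|\psh{w}{g\cdot X}|\leq \|w\|\left(\|g\cdot X-g\cdot t_0\|+\|g\cdot t_0\|\right)=\|w\|\left(\|X-t_0\|+\|t_0\|\right)$, using invariance and $g\cdot t_0=t_0$, rather than from $\|g\cdot X\|=\|X\|$, which does use linearity. Likewise, your opening remark that condition~\eqref{condition} can never hold at a fixed point relies on the orthogonality of a linear isometric action; it is valid in the setting of \cref{theogeneral} and is in any case only motivational here.
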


Note that the reciprocal is true: if all the points in the support of the law of $X$ are fixed points, then almost surely, for all $m\in M$ and for all $g\in G$ we have:
\begin{equation*}
d_M(X,m)=d_M(g\cdot X, m)=d_Q([X],[m]). 
\end{equation*}
Up to the projection on the quotient, we have that the variance of $X$ is equal to the variance of $[X]$  in $M/G$, therefore $[t_0]$ is a Fréchet mean of $[X]$ if and only if $t_0$ is a Fréchet mean of $X$. There is no inconsistency in that case.

\begin{example}
\Cref{theo2} covers the interesting case of the Fisher Rao metric on functions:
\begin{equation*}
\mathcal F = \{f :
[0, 1]\to \R \quad | \quad f \mbox{ is absolutely continuous} \}.
\end{equation*}
Then considering for $G$ the
group of smooth diffeomorphisms $\gamma$ on $[0, 1]$ such that $\gamma(0) = 0$ and $\gamma(1) = 1$, we have a
right group action $G \times \Fc\to \Fc$ given by $\gamma\cdot f = f\circ \gamma $. The Fisher Rao metric is built as a pull back metric of the $ L^2 ([0, 1], \R)$ space through the map $Q : \Fc \to L^2$ given
by: $Q(f) = \dot f/\sqrt{|\dot f|}.$ 
This square root trick is often used, see for instance~\cite{kur}. Note that in this case, $Q$ is a bijective mapping with inverse given by $q \mapsto f$ with $f(t) =\int_0^t q(s)|q(s)|ds$. We can define a group action on $M=  L^2$ as:
$\gamma\cdot q =  q\circ  \gamma \sqrt{\dot{\gamma}}
$,
for which one can check easily by a change of variable that:
\begin{equation*}
\|\gamma\cdot q -\gamma \cdot q' \|^2 = \| q\circ \gamma \sqrt{\dot \gamma} -q'\circ \gamma \sqrt{\dot \gamma}\|^2=  \|q-q'\|^2.
\end{equation*}
So up to the mapping $Q$, the Fisher Rao metric on curve corresponds to the situation $M$ where \cref{theo2} applies. Note that in this case the set of fixed points under the action of $G$ corresponds in the space $\mathcal F$ to constant functions.
\end{example}

We can also provide an computation of the consistency bias in this setting:
\begin{proposition}
\label{cbfp}
Under the assumptions of \cref{theo2}, we write $X=t_0+\sigma \epsilon$ where $t_0$ is a fixed point, $\sigma>0$, $\E(\epsilon)=0$ and $\E(\|\epsilon\|^2)=1$, if there is a Fréchet mean of $[X]$, then the consistency bias is linear with respect to $\sigma$ and it is equal to:
\begin{equation*}
  \sigma  \underset{\|v\|=1}{\sup} \E( \underset{g\in G}{\sup} \psh{v}{g\cdot \epsilon}).
\end{equation*}
\end{proposition}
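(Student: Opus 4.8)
The plan is to compute the quotient variance $F$ in closed form, using crucially that $t_0$ is a fixed point, and then minimise it by separating the radial and angular parts.

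First I would record the two simplifications that the fixed-point assumption provides. Since $g\cdot t_0=t_0$ for every $g\in G$, the orbit $[t_0]$ is the singleton $\{t_0\}$; consequently, for any candidate Fréchet mean $[m_\star]$ the consistency bias is simply $d_Q([t_0],[m_\star])=\inf_{g\in G}\|g\cdot t_0-m_\star\|=\|t_0-m_\star\|$. Moreover, by linearity of the action, $g\cdot X=g\cdot t_0+\sigma\, g\cdot\epsilon=t_0+\sigma\, g\cdot\epsilon$, so the whole orbit $[X]$ is obtained by translating $t_0$ by the vectors $\sigma\, g\cdot\epsilon$. This reduces the problem to locating the single point $m_\star$ and measuring its ordinary distance to $t_0$.

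Next I would expand the variance. Writing $m-t_0=\rho v$ with $\rho=\|m-t_0\|\ge 0$ and $\|v\|=1$, and using that the action is isometric (so $\|g\cdot\epsilon\|=\|\epsilon\|$), each realisation gives
\begin{equation*}
\inf_{g\in G}\|g\cdot X-m\|^2=\rho^2+\sigma^2\|\epsilon\|^2-2\sigma\rho\,\sup_{g\in G}\psh{v}{g\cdot\epsilon}.
\end{equation*}
Taking expectations and using $\E(\|\epsilon\|^2)=1$ yields
\begin{equation*}
F(m)=\rho^2-2\sigma\rho\,\nu(v)+\sigma^2,\qquad \nu(v)=\E\Big(\sup_{g\in G}\psh{v}{g\cdot\epsilon}\Big),
\end{equation*}
where $\nu(v)\ge 0$ because taking $g=e_G$ in the supremum gives $\nu(v)\ge\psh{v}{\E(\epsilon)}=0$.

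Finally I would minimise this expression over $M$. For a fixed direction $v$ the radial quadratic $\rho\mapsto\rho^2-2\sigma\rho\,\nu(v)+\sigma^2$ is minimised at $\rho=\sigma\nu(v)$, with value $\sigma^2(1-\nu(v)^2)$; minimising over the unit sphere then amounts to maximising $\nu(v)$, so $\inf_{m}F(m)=\sigma^2\big(1-\sup_{\|v\|=1}\nu(v)^2\big)$. If a Fréchet mean $m_\star=t_0+\rho_\star v_\star$ exists, then equality in both inequalities above is forced, which gives $\rho_\star=\sigma\nu(v_\star)$ and $\nu(v_\star)=\sup_{\|v\|=1}\nu(v)$; hence the bias equals $\|t_0-m_\star\|=\rho_\star=\sigma\sup_{\|v\|=1}\nu(v)$, which is manifestly linear in $\sigma$. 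The only delicate point is this last step: in infinite dimension the supremum over the unit sphere need not be attained a priori, but the standing hypothesis that a Fréchet mean exists is exactly what guarantees that an optimal direction $v_\star$ is realised, after which the bias is read off directly. I would also handle the degenerate case $\rho_\star=0$ separately, where the formula still holds because then $\sup_{\|v\|=1}\nu(v)=0$.
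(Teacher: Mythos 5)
Your proof is correct and follows essentially the same route as the paper: expand the quotient variance at $t_0+\rho v$ using the fixed-point property and the isometry of the action, obtain a quadratic in $\rho$, minimise radially and then maximise $\nu(v)$ over the unit sphere. The paper's own proof is a terser version of exactly this computation; your additional care about attainment of the supremum (forced by the assumed existence of a Fréchet mean) and the degenerate case $\rho_\star=0$ fills in details the paper leaves implicit.
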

\begin{proof}
For $\lambda>0$ and $\|v\|=1$, we compute the variance $F$ in the quotient space of $[X]$ at the point $t_0+\lambda v$. Since $t_0$ is a fixed point we get:
\begin{equation*}
F(t_0+\lambda v)=\E(\underset{g\in G}{\inf} \|t_0+\lambda v-gX\|^2)=\E(\|X\|^2)-\|t_0\|^2-2\lambda \E(\sup_g \psh{v}{g(X-t_0)})+\lambda^2.
\end{equation*}
Then we minimise $F$ with respect to $\lambda$, and after we minimise with respect to $v$ (with $\|v\|=1$). Which concludes.
\end{proof}

\subsection{Proofs of these theorems}
\subsubsection{Proof of \cref{theo1}}
We start with the following simple result, which aims to differentiate the variance of $X$. This classical result (see~\cite{pen} for instance) is proved in \cref{sec:plemmaf} in order to be the more self-contained as possible:

\begin{lemma}
\label{lemmaf} Let $X$ a random variable on $M$ such that $\E(d(x , X)^2 ) <+\infini$ for some $x\in M$. Then the variance $m \mapsto E (m) = \E(d_M(m, X) ^2 )$ is a continuous function which is differentiable at any point $m\in M$ such that $\P(X \in C(m)) = 0$ where $C(m)$ is the cut locus of $m$. Moreover at such point one has:
\begin{equation*}
\nabla E (m) = -2\E(\log_m (X)),
\end{equation*}
where $\log_m : M \setminus C(m) \to T_m M$ is defined for any $x\in M \setminus C(m)$ as the unique $u\in T_m M$
such that $exp_m (u) = x$ and $\|u\|_m = d_M(x, m)$.
\end{lemma}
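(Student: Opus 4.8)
The plan is to differentiate under the expectation sign, reducing the statement to the classical Riemannian fact that the squared geodesic distance to a fixed point is smooth away from the cut locus. First I would check that $E$ is finite everywhere: from $(a+b)^2\le 2a^2+2b^2$ and the triangle inequality $d_M(m,X)\le d_M(m,x)+d_M(x,X)$ one gets $E(m)\le 2d_M(m,x)^2+2\E(d_M(x,X)^2)<+\infini$. Continuity then follows from the pointwise Lipschitz bound
\[
|d_M(m,y)^2-d_M(m',y)^2|\le d_M(m,m')\,(d_M(m,y)+d_M(m',y)),
\]
a consequence of the reverse triangle inequality, by integrating against the law of $X$ and applying Cauchy--Schwarz together with dominated convergence.

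For the gradient I would fix $m$ with $\P(X\in C(m))=0$ and invoke the standard fact that, because $x\in C(m)\iff m\in C(x)$, the map $y\mapsto \tfrac12 d_M(y,x)^2$ is smooth in a neighbourhood of $m$ whenever $x\notin C(m)$, with gradient at $m$ equal to $-\log_m(x)$ (the first variation of the energy, via the Gauss lemma). Hence for almost every realisation of $X$, namely those with $X\notin C(m)$, which has full probability by hypothesis, the integrand $y\mapsto d_M(y,X)^2$ is differentiable at $m$ with gradient $-2\log_m(X)$; the map $\omega\mapsto \log_m(X(\omega))$ is measurable since $\log$ is continuous off the cut locus.

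To transfer this pointwise differentiability through the expectation, I would examine the difference quotient along a geodesic $\gamma(t)=\exp_m(tw)$ for $w\in T_mM$. Using $d_M(\gamma(t),m)\le |t|\,\|w\|$ in the Lipschitz bound above yields
\[
\left|\frac{d_M(\gamma(t),X)^2-d_M(m,X)^2}{t}\right|\le \|w\|\,(2d_M(m,X)+|t|\,\|w\|),
\]
whose right-hand side is, for $|t|$ bounded, dominated by an integrable function of $X$. Dominated convergence then gives
\[
\lim_{t\to 0}\frac{E(\gamma(t))-E(m)}{t}=-2\,\E(\psh{\log_m X}{w})=\psh{-2\E(\log_m X)}{w},
\]
so all directional derivatives of $E$ at $m$ exist and are linear in $w$.

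Finally, to upgrade these Gâteaux derivatives to genuine differentiability with $\nabla E(m)=-2\E(\log_m X)$, I would show that the candidate gradient $m\mapsto \E(\log_m X)$ is continuous near $m$, again by a domination argument using continuity of $(y,x)\mapsto \log_y x$ off the cut locus and $\P(X\in C(m))=0$; existence of directional derivatives together with local continuity of the candidate gradient gives $E\in\Cc^1$ in a neighbourhood, hence differentiability with the stated gradient. The main obstacle is precisely the geometric regularity near the cut locus: one must control the integrand uniformly on a neighbourhood of $m$ despite the possible non-smoothness of $d_M(\cdot,x)^2$ on $C(x)$, which is why the hypothesis $\P(X\in C(m))=0$ is imposed and where the domination has to be arranged with care.
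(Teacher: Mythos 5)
Your first three steps coincide with the paper's own argument: finiteness and continuity of $E$ from the triangle inequality, the classical fact that $y\mapsto d_M(y,x)^2$ is differentiable at $m$ with gradient $-2\log_m(x)$ whenever $x\notin C(m)$, and a domination of difference quotients by $C\,(2d_M(m,X)+1)$, integrable because $\E(d_M(m,X))\le\E(d_M(m,X)^2+1)<+\infini$. The gap is in your final step. Upgrading your Gâteaux derivatives to differentiability via ``$E\in\Cc^1$ in a neighbourhood'' requires the directional derivatives, and the candidate gradient $y\mapsto\E(\log_y X)$, to exist at \emph{every} point $y$ near $m$; this in turn needs $\P(X\in C(y))=0$ for all $y$ in a neighbourhood of $m$. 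The hypothesis of the lemma is purely pointwise at $m$ and does not propagate to nearby points. Concretely, on the round sphere $S^2$ (where $C(y)=\{-y\}$), let $X$ have atoms of mass proportional to $2^{-n}$ at points $x_n\to -m$ with $x_n\neq -m$. Then $\P(X\in C(m))=\P(X=-m)=0$, yet every neighbourhood of $m$ contains the points $y_n=-x_n$ with $\P(X\in C(y_n))>0$; moreover, since $d_M(y,x_n)=\pi-d_M(y,y_n)$, the atom at $x_n$ contributes to $E$ a term containing $-2\pi\,2^{-n}d_M(y,y_n)$, a genuine cone, so $E$ is \emph{not} differentiable at $y_n$. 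Hence $E$ need not be $\Cc^1$ on any neighbourhood of $m$, your upgrade cannot be carried out, and yet the conclusion of the lemma (differentiability at the single point $m$) is still true — so the route, not just its write-up, must change.

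The repair is to stay at the point $m$ and apply dominated convergence to the full first-order remainder rather than to one-dimensional difference quotients; this is in essence what the paper does in a local chart. For $h\in T_mM$, $0<\|h\|\le 1$, set
\[
R(h,x)=\frac{d_M(\exp_m(h),x)^2-d_M(m,x)^2+2\psh{\log_m x}{h}}{\|h\|}.
\]
For $x\notin C(m)$ one has $R(h,x)\to 0$ as $h\to 0$ (differentiability of $d_M(\cdot,x)^2$ at $m$ composed with $\exp_m$), while your Lipschitz bound together with $d_M(\exp_m(h),m)\le\|h\|$ and $\|\log_m x\|_m=d_M(m,x)$ gives $|R(h,x)|\le 4d_M(m,x)+1$, which is integrable. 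Since $\P(X\in C(m))=0$, dominated convergence along an arbitrary sequence $h_n\to 0$ yields $\E(R(h_n,X))\to 0$, i.e.\ $E\circ\exp_m$ is Fréchet differentiable at $0$ with differential $h\mapsto\psh{-2\E(\log_m X)}{h}$; composing with the local diffeomorphism $\exp_m$ gives the lemma using only the hypothesis at $m$ and nothing at nearby points.
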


We are now ready to prove \cref{theo1}.

\begin{proof} (of \cref{theo1}) Let $m_0$ be a point in the support of $M$ which is not a fixed point and not in the cut locus of $t_0$. Then there exists $g_0\in G$ such that $m_1 = g_0 m_0\neq m_0$. Note that since $x\mapsto g_0x$ is a symmetry (the distance is equivariant under the action of $G$) have that $m_1 = g_0 m_0\notin C(g_0 t_0) = C(t_0)$ ($t_0$ is a fixed point under the action of $G$). Let $v_0 = \log_{t_0}(m_0 )$ and $v_1 = \log_{t_0}(m_1 )$. We have  $v_0\neq v_1$ and since $C(t_0)$ is closed and the $\log_{t_0}$ is continuous application on $M\setminus C(t_0)$ we have:
\begin{equation*}
\underset{\epsilon \to 0}{\lim}\frac{1}{\P(X\in B(m_0,\epsilon))}\E(\fc_{X\in B(m_0,\epsilon)} \log_{t_0}(X))= v_0.
\end{equation*} (we use here the fact that since $m_0$ is in the support of the law of $X$, $\P(X \in B(m_0 ,\epsilon )) > 0$ for any $\epsilon > 0$ so that the denominator does not vanish and the fact that since M is a complete manifold,
it is a locally compact space (the closed balls are compacts) and $\log_{t_0}$ is locally bounded).
Similarly:
\begin{equation*}
\underset{\epsilon \to 0}{\lim} \frac{1}{\P(X\in B(m_0,\epsilon))}\E(\fc_{X\in B(m_0,\epsilon)} \log_{t_0}(g_0X))= v_1.
\end{equation*}
Thus for sufficiently small $\epsilon > 0$ we have (since $v_0\neq v_1$):
\begin{equation}
\E(\log_{t_0} (X)\fc_{X\in B(m_0 , \epsilon) }) \neq \E(\log_{t_0} (g_0 X)\fc_{X\in B(m_0 ,\epsilon )}).
\label{differentlog}
\end{equation}
By using using a \textit{reductio ad absurdum}, we suppose that $[t_0]$ is a Fréchet mean of $[X]$ and we want to find a contradiction with~\eqref{differentlog}. In order to do that we introduce simple functions as the function $x\mapsto \fc_{x\in B(m_0 , \epsilon) }$ which intervenes in Equation~\eqref{differentlog}. Let $s : M\to G$ be a simple function (i.e. a measurable function with finite number of values in $G$). Then $x\mapsto h(x) = s(x)x$ is a measurable function\footnote{Indeed if: $s = \somm{i=1}{n} g_i \fc_{A_i}$ where $(A_i)_{1\leq i\leq n}$ is a partition of $M$ (such that the sum is always defined). Then for any Borel set $B \subset M$ we have:
$h^{-1}(B) =\underset{i=1}{\overset{n}{\bigcup}} g_i^{-1}(B)\cap A_i$ is a measurable set since $x \mapsto  g_i x$ is a measurable function.}. Now, let $E_s (x) = \E(d(x, s(X)X)^2 )$ be the variance of the variable $s(X)X$. Note that (and this is the main point):
\begin{equation*}
\forall g\in G\qquad d_M(t_0 , x) = d_M(gt_0 , gx) = d_M(t_0 , gx) =  d_Q([t_0],[x]),
\end{equation*}
we have: $E_s (t_0) = E (t_0)$. Assume now that $[t_0]$ a Fréchet mean for $[X]$ on the quotient space and let us show that $E_s$ has a global minimum at $t_0$. Indeed for any $m$, we have:
\begin{equation*}
E_s(m)= \E(d_M(m,s(X)X)^2)\geq  \E(d_Q( [m],[X])^2)\geq \E(d_Q([t_0],[X])^2)= E_s(t_0).
\end{equation*}
Now, we want to apply \cref{lemmaf} to the random variables $s(X)X$ and $X$ at the point $t_0$. Since we assume that $X\notin  C(t_0)$ almost surely and $X\notin C(t_0)$ implies $s(X)X\notin C(t_0)$ we get $\P(s(X)X \in C(t_0)) = 0$ and the \cref{lemmaf} applies.
As $t_0$ is a minimum, we already know that the differential of $E_s$ (respectively $E$) at $t_0$ should be zero. We get:
\begin{equation}
\E(\log_{t_0}(X)) = \E(\log_{t_0}(s(X)X)) = 0.
\label{se}
\end{equation}
Now we apply Equation~\eqref{se} to a particular simple function defined by $s(x) = g_0 \fc_{x\in B(m_0,\epsilon)}+ e_G \fc_{x\notin B(m_0,\epsilon)}$.  We split the two expected values in~\eqref{se} into two parts:
\begin{equation}
\label{eq1}
    \E(\log_{t_0}(X) \fc_{X\in B(m_0,\epsilon)})+ \E(\log_{t_0}(X) \fc_{X\notin B(m_0,\epsilon)})=0,
\end{equation}
\begin{equation}
\label{eq2}
    \E(\log_{t_0}(g_0 X) \fc_{X\in B(m_0,\epsilon)})+ \E(\log_{t_0}( X) \fc_{X\notin B(m_0,\epsilon)})=0.
\end{equation}
By substrating~\eqref{eq1} from~\eqref{eq2}, one gets:
\begin{equation*}
\E(\log_{t_0}(X)\fc_{X\in B(m_0,\epsilon)}) = \E(\log_{t_0}(g_0X)\fc_{X\in B(m_0,\epsilon)}),
\end{equation*}
which is a contradiction with~\eqref{differentlog}. Which concludes.\qquad
\end{proof}

\subsubsection{Proof of \cref{theo2}}

\begin{proof}
The extension to \cref{theo2} is quite straightforward. In this setting many things are now explicit since $d(x, y) = \|x-y\|$ , $\nabla_x d(x, y)^2 = 2(x-y)$, $\log_x (y) = y-x$ and the cut locus is always empty. It is then sufficient to go along the previous proof and to change the quantity accordingly. Note that the local compactness of the space is not true in infinite dimension. However this was only used to prove that the log was locally bounded but this last result is trivial in this setting.\qquad
\end{proof}

\section{Conclusion and discussion}

In this article, we exhibit conditions which imply that the template estimation with the Fréchet mean in quotient space is inconsistent. These conditions are rather generic. As a result, without any more information, \textit{a priori} there is inconsistency. 
The behaviour of the consistency bias is summarized in \cref{tab:con}. Surely future works could improve these lower and upper bounds. 
\begin{table}[!ht]
\caption{Behaviour of the consistency bias with respect to $\sigma^2$ the variability of $X=t_0+\sigma \epsilon$. The constants $K_i$'s depend on the kind of noise, on the template $t_0$ and on the group action.}
\label{tab:con}
\begin{tabular}{|p{5cm}|p{4.6cm}|p{4.6cm}|}
\hline
 Consistency bias : $CB$ & $G$ is any group & Supplementary properties for $G$ a finite group \\ \hline
Upper bound of $CB$ & $CB \leq\sigma +2\sqrt{\sigma^2+K_1\sigma}$ (\cref{ub})
& $CB \leq K_2\sigma$ (\cref{alla}) \\ \hline
Lower bound of $CB$ for $\sigma\to \infini$ when the template is not a fixed point &
\multicolumn{2}{c|}{ $CB\geq L \underset{\sigma\to \infini}{\sim} K_3\sigma$
(\cref{sigmainfini})} \\ \hline
Behavior of CB for $\sigma\to 0$ when the template is not a fixed point & $CB \leq U\underset{\sigma\to 0}\sim K_4\sqrt{\sigma}$
& $CB\underset{0}{=}o(\sigma^k)$, $\forall k\in \N$ in the \cref{subsec:Exemple}, can we extend this result for finite group? \\\hline
$CB$ when the template is a fixed point & \multicolumn{2}{c|}{$CB=\sigma \underset{\|v\|=1}\sup \E( \sup_{g\in G} \psh{v}{g\epsilon})$ (\cref{cbfp})} \\ \hline
\end{tabular}
\end{table}

In a more general case: when we take an infinite-dimensional vector space quotiented by a non isometric group action, is there always an inconsistency? An important example of such action is the action of diffeomorphisms. Can we estimate the consistency bias?
In this setting, one estimates the template (or an atlas), but does not exactly compute the Fréchet mean in quotient space, because a regularization term is added.
In this setting, can we ensure that the consistency bias will be small enough to estimate the original template? Otherwise, one has to reconsider the template estimation with stochastic algorithms as in~\cite{all2} or develop new methods.

\appendix \section{Proof of theorems for finite groups' setting}

\subsection{Proof of \cref{difftheo}: differentiation of the variance in the quotient space}
\label{subsec:prop}
In order to show \cref{difftheo} we proceed in three steps. First we see some following properties and definitions which will be used. Most of these properties are the consequences of the fact that the group $G$ is finite. Then we show that the integrand of $F$ is differentiable. Finally we show that we can permute gradient and integral signs.
\begin{enumerate}
\item \label{singnull}
The set of singular points in $\R^n$, is a null set (for the Lebesgue's measure), since it is equal to:
\begin{equation*}
\underset{g\neq e_G}{\bigcup} \ker(x\mapsto g\cdot x-x),
\end{equation*}
a finite union of strict linear subspaces of $\R^n$ thanks to the linearity and effectively of the action and to the finite group.
\item If $m$ is regular, then for $g,\:g'$ two different elements of $G$, we pose:
\begin{equation*}
H(g\cdot m,g'\cdot m)=\{x\in \R^n,\: \|x-g\cdot m\|=\|x-g'\cdot m\|\}.
\end{equation*}
Moreover $H(g\cdot m,g'\cdot m)=(g\cdot m-g'\cdot m)^\perp$ is an hyperplane.
\item \label{defAA} For $m$ a regular point we define the set of points which are equally distant from two different points of the orbit of $m$:
\begin{equation*}
A_{m}=\underset{g\neq g'}{\bigcup} H(g\cdot m,g'\cdot m).
\end{equation*}
Then $A_{m}$ is a null set. For $m$ regular and $x\notin A_{m}$ the minimum in the definition of the quotient distance :
\begin{equation}
d_Q([m],[x])=\underset{g\in G}{\min} \|m-g\cdot x\|,
\label{deftau}
\end{equation} is reached at a unique $g\in G$, we call $g(x,m)$ this unique element.
\item \label{maxdot} By expansion of the squared norm: $g$ minimises $\|m-g\cdot x\|$ if and only if $g$ maximises $\psh{m}{g\cdot x}$.
\item
\label{gconstant}If $m$ is regular and $x\notin A_m$ then:
\begin{equation*}
\forall g\in G\setminus\{g(x,m)\},\: \|m-g(x,m)\cdot x\|<\|m-g \cdot x\|,
\end{equation*}
by continuity of the norm and by the fact that $G$ is a finite group, we can find $\alpha>0$, such that for $\mu\in B(m,\alpha)$ and $y\in B(x,\alpha)$:
\begin{equation}
\forall g \in G\setminus\{g(x,m)\} \: \|\mu-g(x,m)\cdot y\|<\|\mu-g\cdot y\|.
\label{munu}
\end{equation}
Therefore for such $y$ and $\mu$ we have:
\begin{equation*}
g(x,m)=g(y,\mu).
\label{tauconstant}
\end{equation*}
\item \label{dcone}
For $m$ a regular point, we define $Cone(m)$ the convex cone of $\R^n$:
\begin{align}
Cone(m)&= \{x\in \R^n \:/\: \forall g\in G\: \|x-m\|\leq \|x-g\cdot m\| \} \label{defPg}\\
&=\{x\in \R^n \:/\: \forall g\in G\: \psh{m}{x}\geq \psh{gm}{x}\}.\nonumber
\end{align} 
This is the intersection of $|G|-1$ half-spaces: each half space is delimited by $H(m,gm)$ for $g\neq e_G$ (see \cref{fig:Pg}). $Cone(m)$ is the set of points whose projection on $[m]$ is $m$, (where the projection of one point $p$ on $[m]$ is one point $g\cdot m$ which minimises the set $\{\|p-g\cdot m\|, \: g\in G\}$).

\item Taking a regular point $m$ allows us to see the quotient. For every point $x\in \R^n$ we have: $[x]\bigcap Cone(m)\neq \emptyset$, $card ([x]\bigcap Cone(m))\geq 2$ if and only if $x\in A_m$. The borders of the cone is $Cone(m)\setminus \mbox{Int}(Cone(m))=Cone(m)\cap A_m$ (we denote by $\mbox{Int}(A)$ the interior of a part $A$). Therefore $Q=\R^n/G$ can be seen like $Cone(m)$ whose border have been glued together.
\end{enumerate}

The proof of \cref{difftheo} is the consequence of the following lemmas. The first lemma studies the differentiability of the integrand, and the second allows us to permute gradient and integral sign. Let us denote by $f$ the integrand of $F$:

\begin{equation}
\forall \: m,\:x\in M \quad f(x,m)= \underset{g\in G}\min\|m-g\cdot x\|^2.
\label{defintegrande}
\end{equation}
Thus we have: $F(m)=\E( f(X,m))$. The $\min$ of differentiable functions is not necessarily differentiable, however we prove the following result:

\begin{lemma}
\label{lemme:devf}
Let $m_0$ be a regular point, if $x\notin A_{m_0}$ then $m\mapsto f(x,m)$ is differentiable at $m_0$, besides we have:
\begin{equation}
\devp{f}{m}(x,m_0)=2(m_0-g(x,m_0)\cdot x)
\label{devf}
\end{equation}
\end{lemma}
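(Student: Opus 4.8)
The plan is to exploit the local constancy of the optimal group element, which was already established in property~\ref{gconstant} above. The subtle point is that $f(x,\cdot)$ is defined as a minimum of finitely many smooth functions, and such a minimum is in general only Lipschitz — not differentiable at points where two branches coincide. However, the hypothesis $x\notin A_{m_0}$ guarantees that exactly one branch attains the minimum at $m_0$, and, crucially, that this same branch stays strictly dominant throughout an entire neighbourhood of $m_0$. Once this is in hand, $f(x,\cdot)$ locally coincides with a single smooth quadratic and differentiation is immediate.

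Concretely, I would first recall that for each fixed $g\in G$ the map $m\mapsto \|m-g\cdot x\|^2$ is a smooth quadratic whose gradient is $2(m-g\cdot x)$. Then I would invoke inequality~\eqref{munu} with $y=x$: since $m_0$ is regular and $x\notin A_{m_0}$, there exists $\alpha>0$ such that for every $\mu\in B(m_0,\alpha)$ and every $g\in G\setminus\{g(x,m_0)\}$ one has $\|\mu-g(x,m_0)\cdot x\|<\|\mu-g\cdot x\|$. Equivalently, the uniqueness of the minimiser persists on the whole ball, so that $g(x,\mu)=g(x,m_0)$ for all $\mu\in B(m_0,\alpha)$.

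Consequently, on $B(m_0,\alpha)$ the minimum defining $f$ is attained at the constant element $g(x,m_0)$, which yields the identity $f(x,\mu)=\|\mu-g(x,m_0)\cdot x\|^2$ for every $\mu$ in that neighbourhood. Thus $f(x,\cdot)$ agrees, locally around $m_0$, with a single smooth quadratic; it is therefore differentiable at $m_0$, and its gradient is obtained simply by differentiating that quadratic and evaluating at $m_0$, giving $\devp{f}{m}(x,m_0)=2(m_0-g(x,m_0)\cdot x)$, as claimed.

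Since all the substantive work — the uniqueness of the minimiser and its persistence on a neighbourhood — has already been carried out in the preliminary properties, there is no genuine obstacle in this lemma. The only point requiring care is to state the local equality $f(x,\cdot)=\|\cdot-g(x,m_0)\cdot x\|^2$ on an explicit ball \emph{before} differentiating, so that honest differentiability is established rather than merely writing down a formal gradient.
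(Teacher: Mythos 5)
Your proof is correct and follows essentially the same route as the paper's: both invoke the local constancy of the optimal group element $g(x,\cdot)$ near $m_0$ (property~\ref{gconstant}, inequality~\eqref{munu}) to identify $f(x,\cdot)$ with the single smooth quadratic $\mu\mapsto\|\mu-g(x,m_0)\cdot x\|^2$ on a neighbourhood of $m_0$, and then differentiate. Your write-up is in fact slightly more explicit than the paper's about stating the local equality on a ball before differentiating, but the mathematical content is identical.
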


\begin{proof} If $m_0$ is regular and $x\notin A_{m_0}$ then we know from the \cref{gconstant} of the \cref{subsec:prop} that $g(x,m_0)$ is locally constant. Therefore around $m_0$, we have:
\begin{equation*}
f(x,m)=\|m-g(x,m_0)\cdot x\|^2,
\end{equation*}
which can differentiate with respect to $m$ at $m_0$. This proves the \cref{lemme:devf}.\qquad
\end{proof}

Now we want to prove that we can permute the integral and the gradient sign. The following lemma provides us a sufficient condition to permute integral and differentiation signs thanks to the dominated convergence theorem:
\begin{lemma} For every $m_0\in M$ we have the existence of an integrable function $\Phi:M\to \R^+$ such that:
\begin{equation}
\forall m\in B(m_0,1), \: \forall x\in M\quad |f(x,m_0)-f(x,m)|\leq \|m-m_0\|\Phi(x).
\label{variaB}
\end{equation}
\end{lemma}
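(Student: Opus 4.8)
The plan is to recognize that $f(x,m)=\min_{g\in G}\|m-g\cdot x\|^2=d_Q([m],[x])^2$ is the square of the quotient (pseudo-)distance, and then to exploit the fact that a distance is $1$-Lipschitz. First I would factor the difference of squares:
\[
f(x,m_0)-f(x,m)=\big(d_Q([m_0],[x])-d_Q([m],[x])\big)\big(d_Q([m_0],[x])+d_Q([m],[x])\big).
\]
The first factor is controlled by the reverse triangle inequality for $d_Q$, namely $|d_Q([m_0],[x])-d_Q([m],[x])|\leq d_Q([m_0],[m])\leq \|m-m_0\|$, where the last step takes $g=e_G$ in the infimum defining $d_Q$.

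The second factor I would bound using the isometry of the action. Taking $g=e_G$, one gets $d_Q([m_0],[x])\leq \|m_0-x\|\leq \|m_0\|+\|x\|$, and likewise $d_Q([m],[x])\leq \|m\|+\|x\|\leq \|m_0\|+1+\|x\|$, since $\|m\|\leq\|m_0\|+\|m-m_0\|\leq \|m_0\|+1$ for $m\in B(m_0,1)$; the isometry $\|g\cdot x\|=\|x\|$ guarantees these bounds are uniform in $g$. Summing, the second factor is at most $2\|m_0\|+1+2\|x\|$. Combining the two estimates yields
\[
|f(x,m_0)-f(x,m)|\leq \|m-m_0\|\,(2\|m_0\|+1+2\|x\|),
\]
so I would set $\Phi(x)=2\|m_0\|+1+2\|x\|$.

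Finally I would verify integrability against the law of $X$: by Cauchy--Schwarz, $\E(\|X\|)\leq\sqrt{\E(\|X\|^2)}<+\infini$, hence $\E(\Phi(X))=2\|m_0\|+1+2\E(\|X\|)<+\infini$. This is precisely the estimate already invoked in~\eqref{variationFb} (with $\|m-m_0\|\leq 1$ on the ball $B(m_0,1)$ replacing the factor $\|x-t_0\|$ appearing there). I do not expect a genuine obstacle: the only steps requiring care are the difference-of-squares factorization, which upgrades the Lipschitz property of $d_Q$ into a Lipschitz bound for $f$ carrying an $\|x\|$-dependent constant, and the use of the isometry to replace $\|g\cdot x\|$ by $\|x\|$ uniformly in $g$; the integrability of the dominating function then reduces immediately to the standing hypothesis $\E(\|X\|^2)<+\infini$.
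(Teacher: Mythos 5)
Your proof is correct, and it lands on exactly the paper's dominating function $\Phi(x)=2\|m_0\|+1+2\|x\|$, but by a genuinely different decomposition. The paper's own proof never uses the metric structure of $d_Q$: for each fixed $g\in G$ it applies the polarization identity $\|g\cdot x-m_0\|^2-\|g\cdot x-m\|^2=\psh{m-m_0}{2g\cdot x-(m_0+m)}$, bounds the right-hand side via Cauchy--Schwarz and $\|g\cdot x\|=\|x\|$, and then pushes the estimate through the minimum over $g$ by a two-sided inequality argument (the same computation is reused as~\eqref{variationFb} in the proof of \cref{lb}). You instead factor the scalar difference of squares of quotient distances and absorb the minimum over $g$ once and for all into the reverse triangle inequality for $d_Q$. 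What your route buys: it is shorter, it makes clear that the estimate owes nothing to the Hilbert structure (it would work for any invariant distance, with $\|m-m_0\|$ replaced by $d_M(m,m_0)$), and it isolates the only two relevant facts, namely that $m\mapsto d_Q([m],[x])$ is $1$-Lipschitz and that $d_Q([m],[x])$ grows at most linearly in $\|x\|$. What it costs: you rely on the assertion of \cref{subsec:mintro} that $d_Q$ is a symmetric pseudo-distance, which the paper states but does not prove; that is where the isometry genuinely enters (via $\|g\cdot a-h\cdot b\|=\|h^{-1}g\cdot a-b\|$), and it is also what identifies $f(x,m)=\min_{g\in G}\|m-g\cdot x\|^2$ with $d_Q([m],[x])^2$ under the paper's convention $d_Q([a],[b])=\inf_{g\in G}\|g\cdot a-b\|$; the paper's per-element computation is self-contained by comparison. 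One minor imprecision on your side: isometry is not what makes your second-factor bounds ``uniform in $g$'' --- those bounds only use the choice $g=e_G$ --- it is what licenses the triangle inequality one level down, so nothing in your argument breaks. Your integrability check via Cauchy--Schwarz matches the paper's implicit one.
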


\begin{proof}For all $g\in G$, $m\: \in M$ we have:
\begin{align*}
\| g\cdot x-m_0\|^2-\|g\cdot x-m\|^2&=\psh{m-m_0}{2g\cdot x-(m_0+m)}\\
&\leq  \|m-m_0\|\times \left(\|m_0+m\|+\|2x\|\right)\\
\underset{g\in G}{\min} \| g\cdot x-m_0\|^2&\leq \|m-m_0\|\left(\|m_0+m\|+\|2x\|\right)+\|g\cdot x-m\|^2\\
\underset{g\in G}{\min} \| g\cdot x-m_0\|^2&\leq \|m-m_0\|\left(\|m_0+m\|+\|2x\|\right)+\underset{g\in G}{\min}\|g\cdot x-m\|^2\\
\underset{g\in G}{\min} \| g\cdot x-m_0\|^2-\underset{g\in G}{\min}\|g\cdot x-m\|^2&\leq  \|m-m_0\|\left(2\|m_0\|+\|m-m_0\|+\|2x\|\right)
\end{align*}
By symmetry we get also the same control of $f(x,m)-f(x,m_0)$, then:
\begin{equation}
|f(x,m_0)-f(x,m)|\leq \|m_0-m\| \left(2\|m_0\|+\|m-m_0\|+\|2x\|\right) \label{variation}
\end{equation}
The function $\Phi$ should depend on $x$ or $m_0$, but not on $m$. That is why we take only $m\in B(m_0,1)$, then we replace $\|m-m_0\|$ by $1$ in \eqref{variation}, which concludes.\qquad
\end{proof}

\subsection{Proof of \cref{theo}: the gradient is not zero at the template}
\label{subsec:prooftheo}
To prove it, we suppose that $\nabla F(t_0)=0$, and we take the dot product with $t_0$:
\begin{equation}\psh{\nabla F(t_0)}{t_0}=2\E(\psh{X}{t_0} -\psh{g(X,t_0)\cdot X}{t_0})=0.
\label{nablaJx}
\end{equation}
The \cref{maxdot} of $(x,m)\mapsto g(x,m)$ seen at \cref{subsec:prop} leads to:
$$\psh{X}{t_0}-\psh{g(X,t_0)\cdot X}{t_0}\leq 0 \mbox{ almost surely.}$$
So the expected value of a non-positive random variable is null. Then
\begin{align*}
 \psh{X}{t_0}-\psh{g(X,t_0)\cdot X}{t_0}&=0 \mbox{ almost surely}
\psh{X}{t_0}&=\psh{g(X,t_0)\cdot X}{t_0} \mbox{ almost surely.}
\end{align*}
Then $g=e_G$ maximizes the dot product almost surely. Therefore (as we know that $g(X,t_0)$ is unique almost surely, since $t_0$ is regular):
\begin{equation*}
g(X,t_0)=e_G \mbox{ almost surely,}
\end{equation*} 
which is a contradiction with Equation~\eqref{cone}. \qquad

\subsection{Proof of \cref{alla}: upper bound of the consistency bias}
\label{allaproof}
In order to show this Theorem, we use the following lemma:
\begin{lemma}
\label{lemmestephanie1}
We write $X=t_0+\epsilon$ where $\E(\epsilon)=0$ and we make the assumption that the noise $\epsilon$ is a subgaussian random variable. This means that it exists $c>0$ such that:
\begin{equation}
\forall m\in M=\R^n,\: \E(\exp(\psh{\epsilon}{m}))\leq c \exp\left(\frac{s^2 \|m\|^2}{2}\right).
\label{subgaussien}
\end{equation}
If for $m\in M$ we have:
\begin{equation}
\tilde \rho:=d_Q([m],[t_0])\geq s\sqrt{2\log (c|G|)},
\label{rhohyp}
\end{equation}
then we have:
\begin{equation}
\tilde \rho^2-\tilde \rho s\sqrt{8\log(c|G|)}\leq F(m)-\E(\|\epsilon\|^2).
\label{rhocontrol}
\end{equation}
\end{lemma}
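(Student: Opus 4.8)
The plan is to obtain a dimension-free lower bound on $F(m)$ by localizing the problem to the $|G|$ directions determined by the orbit of $m$, and then applying a sub-Gaussian maximal inequality.

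First I would expand the quotient variance. Writing $X=t_0+\epsilon$ and using that the action is linear and isometric, $\|m-g\cdot X\|^2=\|m-g\cdot t_0\|^2-2\psh{m-g\cdot t_0}{g\cdot\epsilon}+\|\epsilon\|^2$ with $\|g\cdot\epsilon\|=\|\epsilon\|$. Since $\psh{m-g\cdot t_0}{g\cdot\epsilon}=\psh{g^{-1}\cdot m-t_0}{\epsilon}$, setting $w_g=g^{-1}\cdot m-t_0$ (so that $\|w_g\|\geq\tilde\rho$ for every $g$, with equality for the registering element) gives
\begin{equation*}
F(m)-\E(\|\epsilon\|^2)=\E\left(\underset{g\in G}{\inf}\left[\|w_g\|^2-2\psh{w_g}{\epsilon}\right]\right).
\end{equation*}
Because $G$ is finite this is an infimum over finitely many terms, each attached to a direction $\hat w_g=w_g/\|w_g\|$.

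Next I would bound the integrand from below. Writing $\psh{w_g}{\epsilon}=\|w_g\|Y_g$ with $Y_g=\psh{\hat w_g}{\epsilon}$ and $Y^\star=\max_{g}Y_g$, the map $t\mapsto t^2-2tY_g$ is nondecreasing on $[\tilde\rho,+\infini)$ as soon as $Y_g\leq\tilde\rho$; hence $\|w_g\|^2-2\|w_g\|Y_g\geq\tilde\rho^2-2\tilde\rho Y_g\geq\tilde\rho^2-2\tilde\rho Y^\star$ whenever $Y^\star\leq\tilde\rho$. On the event $\{Y^\star\leq\tilde\rho\}$ this holds for every $g$, so $\underset{g}{\inf}[\,\cdots\,]\geq\tilde\rho^2-2\tilde\rho Y^\star$. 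This is the key pointwise estimate, and the hypothesis $\tilde\rho\geq s\sqrt{2\log(c|G|)}$ of~\eqref{rhohyp} is exactly what forces the complementary event $\{Y^\star>\tilde\rho\}$ to be so unlikely, through the sub-Gaussian tail $\P(Y_g>r)\leq c\,e^{-r^2/(2s^2)}$ and a union bound, that its contribution cannot spoil the inequality.

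Finally I would estimate $\E(Y^\star)$. Each $Y_g=\psh{\hat w_g}{\epsilon}$ is a projection of $\epsilon$ onto a unit vector, so \eqref{subgaussien} gives $\E(\exp(\lambda Y_g))\leq c\exp(s^2\lambda^2/2)$ for all $\lambda>0$. By Jensen's inequality followed by a union bound over the $|G|$ terms, $\exp(\lambda\E(Y^\star))\leq\somm{g\in G}{}\E(\exp(\lambda Y_g))\leq c|G|\exp(s^2\lambda^2/2)$, whence $\E(Y^\star)\leq\log(c|G|)/\lambda+s^2\lambda/2$; optimizing at $\lambda=\sqrt{2\log(c|G|)}/s$ yields $\E(Y^\star)\leq s\sqrt{2\log(c|G|)}$. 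Taking expectations in the pointwise estimate then gives $F(m)-\E(\|\epsilon\|^2)\geq\tilde\rho^2-2\tilde\rho\,\E(Y^\star)\geq\tilde\rho^2-\tilde\rho\,s\sqrt{8\log(c|G|)}$, which is \eqref{rhocontrol}.

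The main obstacle is precisely the step on $\{Y^\star>\tilde\rho\}$: there a far orbit point $w_g$ with $\|w_g\|$ between $\tilde\rho$ and $2Y^\star-\tilde\rho$ can make $\|w_g\|^2-2\|w_g\|Y_g$ dip below $\tilde\rho^2-2\tilde\rho Y^\star$, so the clean pointwise inequality can fail and one must verify that the lower bound $\tilde\rho\geq s\sqrt{2\log(c|G|)}$ renders this tail contribution negligible; the remaining computations are routine. I note finally how the lemma drives \cref{alla}: since $F(m_\star)\leq F(t_0)\leq\E(\|\epsilon\|^2)$, the right-hand side of \eqref{rhocontrol} is nonpositive, which forces $\tilde\rho\leq s\sqrt{8\log(c|G|)}$.
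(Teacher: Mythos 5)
Your reduction of $F(m)-\E(\|\epsilon\|^2)$ to $\E\left(\inf_g\left[\|w_g\|^2-2\psh{w_g}{\epsilon}\right]\right)$ and your maximal inequality $\E(Y^\star)\leq s\sqrt{2\log(c|G|)}$ are both correct, and the latter is the same Jensen-plus-union-bound computation used in the paper. The genuine gap is exactly the step you flag at the end and then dismiss as routine: the pointwise estimate $\inf_g[\cdots]\geq\tilde\rho^2-2\tilde\rho Y^\star$ holds only on $\{Y^\star\leq\tilde\rho\}$, and your claimed mechanism for the complementary event is wrong. Hypothesis~\eqref{rhohyp} does \emph{not} make $\{Y^\star>\tilde\rho\}$ unlikely: Chernoff plus a union bound give $\P(Y^\star>\tilde\rho)\leq c|G|e^{-\tilde\rho^2/(2s^2)}$, and at the threshold $\tilde\rho=s\sqrt{2\log(c|G|)}$ this bound equals $1$, i.e.\ it says nothing. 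On that event the integrand can drop as low as $-(Y^\star)^2$ (minimise $t\mapsto t^2-2tY_g$ at $t=Y_g\geq\tilde\rho$), so the best uniform version of your decomposition is $\inf_g[\cdots]\geq\tilde\rho^2-2\tilde\rho Y^\star-(Y^\star-\tilde\rho)_+^2$, whence $F(m)-\E(\|\epsilon\|^2)\geq\tilde\rho^2-2\tilde\rho\E(Y^\star)-\E\left((Y^\star-\tilde\rho)_+^2\right)$. Tail integration under~\eqref{rhohyp} bounds the deficit by $2s^4/\tilde\rho^2>0$, which yields a strictly weaker inequality than~\eqref{rhocontrol}; to recover the lemma as stated you would need $\E\left((Y^\star-\tilde\rho)_+^2\right)\leq 2\tilde\rho\left(s\sqrt{2\log(c|G|)}-\E(Y^\star)\right)$, a comparison between two quantities that both depend delicately on the joint law of the $Y_g$'s, and nothing in your argument provides it. So the proposal, as written, proves a weaker statement than the lemma.

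The paper's proof is organised precisely to avoid this truncation. It applies the exponential-moment method to the full variables $\eta_g=-\|t_0-gm\|^2+2\psh{\epsilon}{gm-t_0}$ \emph{before} replacing any length by $\tilde\rho$: sub-Gaussianity gives $\E(e^{x\eta_g})\leq c\exp\left((-x+2s^2x^2)\|gm-t_0\|^2\right)$, and for $0<x<1/(2s^2)$ the exponent is decreasing in $\|gm-t_0\|$, so the far orbit points---exactly the ones that break your pointwise bound---are automatically dominated by the nearest one, with no error term. Summing over $g$, taking logarithms and optimising at $x_\star=\sqrt{\log(c|G|)/2}/(s\tilde\rho)$ gives~\eqref{rhocontrol} exactly; the true role of hypothesis~\eqref{rhohyp} is only to guarantee that $x_\star$ lies in the admissible range $(0,1/(2s^2))$, not to make any event negligible. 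If you reorganise your proof this way (keep $\|w_g\|$ inside the exponential bound and use monotonicity after the moment-generating-function step), your argument becomes the paper's; your final remark on how the lemma yields \cref{alla} is correct and matches the paper.
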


\begin{proof} (of \cref{lemmestephanie1})
First we expand the right member of the inequality~\eqref{rhocontrol}:
\begin{equation*}
\E(\|\epsilon\|^2)-F(m)= \E \left(\underset{g\in G}{\max}( \|X-t_0\|^2-\|X-gm\|^2)\right)
\end{equation*}
We use the formula $ \|A\|^2 -\|A+B\|^2=-2\psh AB-\|B\|^2$ with $A=X-t_0$ and $B=t_0-gm$:
\begin{equation}
\E(\|\epsilon\|^2)-F(m)= \E\left[\underset{g\in G}{\max }\left(-2\psh{X-t_0}{t_0-gm}- \|t_0-gm\|^2\right)\right]
=\E(\underset{g\in G}\max \: \eta_g),\label{max}
\end{equation}
with $\eta_g=-\|t_0-gm\|^2+2\psh{\epsilon}{gm-t_0} $. Our goal is to find a lower bound of $F(m)-\E(\|\epsilon\|^2)$, that is why we search an upper bound of $\E(\underset{g\in G}{\max} \eta_g)$ with the Jensen's inequality. We take $x>0$ and we get by using the assumption~\eqref{subgaussien}:
\begin{align}
\exp(x\E(\underset{g\in G}{\max}\: \eta_g))&\leq \E(\exp(\underset{g\in G}{\max}\: x\eta_g))  \leq \E\left( \underset{g\in G}{\sum} \exp({x\eta_g})\right) \nonumber\\
&\leq  \underset{g}{\sum} \exp(-x\|t_0-gm\|^2)\E(\exp(\psh{\epsilon}{2x(gm-t_0)}) \nonumber \\
&\leq c \underset{g}{\sum} \exp(-x\|t_0-gm\|^2)\exp(2s^2 x^2\|gm-t_0\|^2) \nonumber\\
&\leq c \underset{g}{\sum} \exp(\|gm-t_0\|^2(-x+2x^2s^2)) \label{sommeg}
\end{align}
Now if $(-x+2t^2x^2)< 0$, we can take an upper bound of the sum sign in~\eqref{sommeg} by taking the smallest value in the sum sign, which is reached when $g$ minimizes $\|g\cdot m-t_0\|$ multiplied by the number of elements summed. Moreover $(-x+2x^2s) < 0 \Longleftrightarrow 0< x<\frac1{2s^2}$. Then we have:
\begin{equation*}
\exp(x\E(\underset{g\in G}{\max} \:\eta_g))\leq  c |G|\exp(\tilde{\rho}^2(-x+2x^2 s^2)) \text{ as soon as } 0< x<\frac1{2s^2}.
\end{equation*}
Then by taking the log:
\begin{equation}
\E(\underset{g\in G}{\max} \eta_g)\leq  \frac{\log c|G|}{x}+(2xs^2-1)\tilde{\rho}^2.
\label{toptimal}
\end{equation}
Now we find the $x$ which optimizes inequality~\eqref{toptimal}. By differentiation, the right member of inequality~\eqref{toptimal} is minimal for $x_\star =\sqrt{\log c|G|/2}/(s\tilde \rho)$ which is a valid choice because $x_\star \in (0,\frac1{2s^2})$ by using the assumption~\eqref{rhohyp}. With the equations~\eqref{max} and~\eqref{toptimal} and $x_\star$ we get the result.\qquad
\end{proof}

\begin{proof}(of \cref{alla}) We take $m_\star\in \mbox{argmin }F$, $\tilde \rho=d_Q([m_\star], [t_0])$, and $\epsilon=X-t_0$. We have: $F(m_\star)\leq F(t_0)\leq \E(\|\epsilon\|^2)$ then $F(m_\star)-\E(\|\epsilon\|^2)\leq 0$. If $\tilde \rho> s \sqrt{2\log(|G|})$ then  we can apply \cref{lemmestephanie1} with $c=1$. Thus:
\begin{equation*}
\tilde \rho^2-\tilde \rho s \sqrt{8\log(|G|)}\leq 2F(m_\star)-\E(\|\epsilon\|^2)\leq 0,
\end{equation*}
which yields to $\tilde \rho\leq s \sqrt{8\log (|G|)}$. If $\tilde \rho\leq s \sqrt{2\log(|G|})$, we have nothing to prove. \qquad
\end{proof}

Note that the proof of this upper bound does not use the fact that the action is isometric, therefore this upper bound is true for every finite group action.

\subsection{Proof of \cref{2p}: inconsistency in $\R^2$ for the action of translation}
\label{p2p}

\begin{proof}
We suppose that $\E(X)\in HP_A\cup L$. In this setting we call $\tau(x,m)$ one of element of the group $G=\T$ which minimises $\|\tau\cdot x-m\|$ see~\eqref{deftau} instead of $g(x,m)$. The variance in the quotient space at the point $m$ is:
\begin{equation*}
F(m)=\E\left(\underset{\tau \in \Z/2\Z}{\min} \|\tau \cdot X-m\|^2\right)=\E(\| \tau(X,m)\cdot X-m\|^2).
\end{equation*}
As we want to minimize $F$ and $F(1\cdot m)=F(m)$, we can suppose that $m\in HP_A\cup L$. We can completely write what take $\tau(x,m)$ for $x\in M$:
\begin{itemize}
\item If $x\in HP_A\cup L$ we can set $\tau(x,m)=0$ (because in this case $x,\: m$ are on the same half plane delimited by $L$ the perpendicular bisector of $m$ and $-m$).
\item If $x\in HP_B$ then we can set $\tau(x,m)=1$ (because in this case $x,\: m$ are not on the same half plane delimited by $L$ the perpendicular bisector of $m$ and $-m$).
\end{itemize}
This allows use to write the variance at the point $m\in HP_A$:
\begin{equation*}
F(m)=\left(\E\left(\|X-m\|^2\fc_{\{X\in HP_A \cup L\}}\right)+\E\left(\|1\cdot X-m\|^2\fc_{\{X\in HP_B\}}\right)\right)
\end{equation*}
Then we define the random variable $Z$ by: $Z=X\fc_{X\in HP_A\cup L}+1\cdot X\fc_{X\in HP_B}$, such that for $m\in HP_A$ we have: $F(m)= \E(\|Z-m\|^2)$ and $F(m)=F(1\cdot m)$. Thus if $m_\star$ is a global minimiser of $F$, then $m_\star=\E(Z)$ or $m_\star=1\cdot \E(Z)$.
So the Fréchet mean of $[X]$ is $[\E(Z)]$. Here instead of using \cref{theo}, we can work explicitly: Indeed there is no inconsistency if and only if $\E(Z)=\E(X)$, ($\E(Z)=1\cdot \E(X)$ would be another possibility, but by assumption $\E(Z),\: \E(X)\in HP_A$), by writing $X=X\fc_{X\in HP_A}+X\fc_{X\in HP_B\cup L}$, we have:
\begin{align*}
\E(Z)=\E(X)&\Longleftrightarrow \E(1\cdot X \fc_{X\in HP_B\cup L})=\E(X\fc_{X\in HP_B\cup L})\\
&\Longleftrightarrow  1\cdot \E( X \fc_{X\in HP_B\cup L})=\E(X\fc_{X\in HP_B\cup L})\\
&\Longleftrightarrow \E(X\fc_{X\in HP_B\cup L})\in L\\
&\Longleftrightarrow \P(X\in HP_B)=0,
\end{align*}
Therefore there is an inconsistency if and only if $\P(X\in HP_B)>0$ (we remind that we made the assumption that $\E(X)\in HP_A\cup L$). If $\E(X)$ is regular (i.e. $\E(X)\notin L$), then there is an inconsistency if and only if $X$ takes values in $HP_B$, (this is exactly the condition of \cref{theo}, but in this particular case, this is a necessarily and sufficient condition). This proves point~1. Now we make the assumption that $X$ follows a Gaussian noise in order compute $\E(Z)$ (note that we could take another noise, as long as we are able to compute $\E(Z)$). For that we convert to polar coordinates: $(u,v)^T=\E(X)+(r\cos \theta,r\sin \theta)^T$ where $r>0$ et $\theta\in [0,2\pi]$. We also define: $d=\mbox{dist}(\E(X),L)$, $\E(X)$ is a regular point if and only if $d>0$. We still suppose that $\E(X)=(\alpha,\beta)^T\in HP_A\cup L$. First we parametrise in function of $(r,\theta)$ the points which are in $HP_B$:
 \begin{align*}
   v<u & \Longleftrightarrow  \beta+r\sin \theta< \alpha+r\cos \theta \Longleftrightarrow  \frac{\beta-\alpha}{r}< \sqrt 2 \cos(\theta+\frac \pi4)\\
   &\Longleftrightarrow \frac dr< \cos(\theta+\frac \pi 4) \\
   &\Longleftrightarrow   \theta \in \left[-\frac\pi4 -\arccos(d/r), -\frac\pi 4+\arccos(d/r)\right] \mbox{ and } d<r
\end{align*}
Then we compute $\E(Z)$:
\begin{align*}
\E(Z)=&\E( X \fc_{X\in HP_A})+\E(1\cdot X\fc_{X\in HP_B})\\
\E(Z)=&\int_{0}^{d}\int_{0}^{2\pi} \begin{pmatrix} \alpha+r\cos \theta\\\beta+r\sin \theta\end{pmatrix}\frac{\exp\left(-\frac{r^2}{2s^2}\right)}{2\pi s^2}rd\theta dr\\ 
 &+\int_{d}^{+\infini}\int_{\arccos(\frac dr)-\frac \pi4}^{2\pi-\frac \pi4-\arccos (\frac dr)}
  \begin{pmatrix} \alpha+r\cos \theta\\\beta+r\sin \theta\end{pmatrix}\frac{\exp\left(-\frac{r^2}{2s^2}\right)}{2\pi s^2}rdrd\theta\\
 & +\int_{d}^{+\infini}  \int_{-\frac\pi4 -\arccos\left( \frac dr\right)}^{-\frac\pi 4+\arccos\left( \frac dr\right)} 
 \begin{pmatrix}\beta+r\sin \theta\\ \alpha+r\cos \theta\end{pmatrix}\frac{\exp\left(-\frac{r^2}{2s^2}\right)}{2\pi s^2}rdrd\theta\\
 =& E(X) + \int_d^{+\infini} \frac{r^2\exp(-\frac{r^2}{2s^2})}{\pi s^2} \sqrt 2g\left(\frac dr\right)dr\times( -1 , 1)^T,
\end{align*}
We compute $\tilde \rho=d_Q([\E(X)],[\E(Z)])$ where $d_Q$ is the distance in the quotient space defined in~\eqref{quotientdistance}. As we know that $\E(X),\: \E(Z)$ are in the same half-plane delimited by $L$, we have: $\tilde \rho=d_Q([\E(Z)],[\E(X)])=\|\E(Z)-\E(X)\|$.  This proves \cref{item2}, note that \cref{itema,itemb,itemc} are the direct consequence of \cref{item2} and basic analysis.\qquad
\end{proof}

\section{Proof of \cref{lemmaf}: differentiation of the variance in the top space}
\label{sec:plemmaf}

\begin{proof}
By triangle inequality it is easy to show that $E$ is finite and continuous everywhere. Moreover, it is a well known fact that $x\mapsto d_M(x, z)^2$ is differentiable at any $m\in M\setminus C(z)$ (i.e. $z\notin C(m)$) with derivative $-2\log_m (z)$. Now since:
\begin{align*}
|d_M(x, z)^2 -d_M(y, z)^2| &= |d_M(x, z)-d_M(y, z)\|d_M(x, z) + d_M(y, z)| \\
&\leq  d_M(x, y)(2d_M(x, z) + d_M(y, x)),
\end{align*}
we get in a local chart $\phi:U\to V\subset \R^n$ at $t = \phi(m)$ we have locally around $t$ that:
\begin{equation*}
h \mapsto d_M(\phi^{-1} (t), \phi^{-1}(t + h)),
\end{equation*} 
is smooth and $|d_M(\phi^{-1}(t), \phi^{-1}(t + h))| \leq C|h|
$ for a $C > 0$. Hence for sufficiently small~$h$, $|d_M(\phi^{-1} (t), z)^2-d_M(\phi^{-1}(t + h), z)^2 | \leq C|h|(2d_M(m, z) + 1)$. We get the result from dominated convergence Lebesgue theorem with $\E(d_M(m, X)) \leq  \E(d_M(m, X)^2 +1)<+\infini$.\qquad
\end{proof}

\bibliographystyle{alpha} \bibliography{bi}

\end{document}